\newtheorem{theorem}{Theorem}[section]
\newtheorem{proposition}{Proposition} [section]
\newtheorem{remark}{Remark}
\begin{document}
\onecolumn
\title{Image representation and denoising using squared eigenfunctions of Schr\"odinger operator }
\date{}

\maketitle

\centerline{Zineb KAISSERLI$^{*,**,1}$, Taous-Meriem LALEG-KIRATI$^{*,2}$} 
\medskip

{\footnotesize
 \centerline{ $^{*}$Computer, Electrical and Mathematical Science and Engineering Division}
 \centerline{King Abdullah University of Science and Technology (KAUST), KSA}
 \centerline{and}
 \centerline{$^{**}$Mathematical and Computer Science Division}
 \centerline{Abdelhamid Ibn Badis University (UMAB), Algeria}
} 
\medskip
{\footnotesize
 \centerline{Emails : $^{1}$kaisserli.z@gmail.com and $^{2}$taousmeriem.laleg@kaust.edu.sa}}

\bigskip

\begin{abstract}
This paper extends to two dimensions the recent signal analysis method based on the semi-classical analysis of the Schr\"odinger operator. The generalization uses the separation of variables technique when writing the eigenfunctions of the Schr\"odinger operator. The algorithm is described and the effect of some parameters on the convergence of this method are numerically studied. Some examples on image reconstruction and denosing are illustrated.\\

 \textit{Keywords --} Image reconstruction, image denoisng,  Schr\"odinger operator, discrete spectrum, localized functions, semi-classical analysis.
\end{abstract}


\section{Introduction}
A new signal analysis method has been proposed in \cite{Laleg-Kirati2013}. The idea consists is decomposing the signal using a family of a spatially shifted and localized functions, which are given by the squared $L^2$-normalized eigenfunctions associated to the discrete spectrum of the one dimensional semi-classical Schr\"odinger operator, where the signal is considered as a potential of this operator. It has been shown in \cite{Helffer2011, Laleg-Kirati2013} that the discrete spectrum consisting of negative eigenvalues and the associated squared $L^2$-normalized eigenfunctions can be used to reconstruct, estimate and analyze the signal. This method has been denoted in \cite{Laleg-Kirati2013} SCSA for \textbf{S}emi-\textbf{C}lassical \textbf{S}ignal \textbf{A}nalysis. Besides its interesting localization property, the SCSA method has proved its performance in some applications. For instance, interesting results have been obtained when applying the SCSA method to the analysis of arterial blood pressure signals \cite{Laleg-Kirati2007, Laleg-Kirati2010, Laleg-Kirati2013} and to the analysis of the performance of turbomachinery \cite{Eleiwi2011}. Moreover, it has been shown in \cite{Liu2012}, that the SCSA method can cope with noisy signals, making this method a potential tool for denoising. The filtering property of the SCSA method is currently under study  through in-vivo experiments with Magnetic Resonance Spectroscopy data \cite{Laleg-KiratiSeptember9-102014}.  

In this paper, the SCSA method is extended to two dimensions (2D). This extension is useful for image representation and denoising. The considered approach consists in using separation of variables method when writing the squared $L^2$-normalized eigenfunctions of a 2D semi-classical Schr\"odinger operator, where the image is considered as a potential of this operator.  The problem consists then in solving the spectral problem for 1D Schr\"odinger operators and in combining the results for the reconstruction, estimation and  denoising of images with an appropriate formula inspired from semi-classical analysis theory \cite{Helffer1990, Karadzhov1986}. This formula can be written as the product of the squared $L^2$-normalized eigenfunctions in both directions. The idea of using 1D transforms for 2D reconstruction is often used in image processing \cite{Dudgeon1993,Jain1989}, the 2D Fourier transform is an example \cite{Mallat2009}. As in the 1D case, the convergence of this formula when the semi-classical parameter converges to zero is proved for image reconstruction. We will also show that this method can be used for image denoising and we will illustrate the results through some examples.

In section \ref{sec2}, the 1D SCSA method is described. Then, in section \ref{sec3}, the 2D SCSA formula is presented followed by the convergence analysis when the semi-classical parameter goes to $0$. In section \ref{sec4}, an algorithm based on the spectral problems of 1D Schr\"odinger operators and tensor product is introduced. The analysis of some parameters and the use of this algorithm for image representation is illustrated in section \ref{sec5}. First results on image denoising based on 2D SCSA approach are presented in section \ref{sec6}. Finally the last section summarizes and discusses the obtained results.

\section{Preliminary (SCSA in $1$D case)}\label{sec2}
In this section, we recall the idea behind the SCSA method \cite{Helffer2011,Laleg-Kirati2013}. Let us consider the following one dimensional semi-classical Schr\"odinger operator:
\begin{equation}\label{Schrodinger}
    \mathcal{H}_{1,h}(V_{1})\psi= -h^{2}\frac{d^{2}\psi}{dx^{2}}-V_{1}\psi,\quad \psi \in \mathbf{H}^2(\mathbb{R}),
\end{equation}
where $h\in \mathbb{R}^{*}_{+}$ is the semi-classical parameter \cite{Dimassi1999}, and $V_{1}$ is a positive real valued function belonging to  $\mathcal{C}^{\infty}(\Omega_{1})$ where  $\Omega_{1}\subset \mathbb{R}$ is compact. Here $\mathbf{H}^2(\mathbb{R})$ denotes the Sobolev space of order $2$. Then, the potential $V_{1}$ can be represented using the following proposition.

\begin{proposition} $\cite{Helffer2011}$ \label{prop1}
Let $V_{1} \in  \mathcal{C}^{\infty}(\Omega_{1})$ be positive real valued function, where $\Omega_{1}\subset \mathbb{R}$ is compact. Then, $V_{1}$ can be represented using the following formula:
\begin{equation}\label{SCSAhg}
    V_{1,h,\gamma,\lambda}(x)=-\lambda+\left(\frac{h}{L^{cl}_{1,\gamma}}\sum_{k=1}^{K_h^{\lambda}}
    (\lambda-\mu_{k,h})^{\gamma}\psi_{k,h}^{2}(x)\right)^{\frac{2}{1+2\gamma}},
\end{equation}
where $h\in \mathbb{R}^{*}_{+}$, $\gamma\in \mathbb{R}_{+}$, $ \lambda \in \mathbb{R}^{*}_{-}$, and $ L^{cl}_{1,\gamma} $ is the suitable universal semi-classical constant given by: 
\begin{equation*}
L^{cl}_{1,\gamma}=\frac{1}{2\sqrt{\pi}}\frac{\Gamma(\gamma+1)}{\Gamma(\gamma+\frac{3}{2})},
\end{equation*}
where $\Gamma$ is the Gamma function.

Moreover, $\mu_{k,h}$ are the negative eigenvalues of the operator $\mathcal{H}_{1,h}(V_{1})$ with  $\mu_{1,h} < \cdots <\mu_{{K_h^{\lambda}},h}< \lambda$, $K_{h}^{\lambda}$ is the number of negative eigenvalues smaller than $\lambda$, and $\psi_{k,h}$ are the associated $L^2$-normalized eigenfunctions such that:
\begin{equation*}
    \mathcal{H}_{1,h}(V_{1})\,\psi_{k,h}=\mu_{k,h}\psi_{k,h}, \quad k=1,\cdots,K_h^{\lambda}
\end{equation*}
\end{proposition}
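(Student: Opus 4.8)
The plan is to read the weighted sum $\sum_{k=1}^{K_h^\lambda}(\lambda-\mu_{k,h})^\gamma\psi_{k,h}^2(x)$ as the value on the diagonal of the Schwartz kernel of $f_\lambda\big(\mathcal{H}_{1,h}(V_1)\big)$ with $f_\lambda(t)=(\lambda-t)_+^\gamma$, and then to apply the semi-classical Weyl asymptotics for functions of an $h$-pseudodifferential operator, in the spirit of \cite{Helffer1990,Karadzhov1986}. Since $\mathcal{H}_{1,h}(V_1)=-h^2\frac{d^2}{dx^2}-V_1$ has symbol $p^2-V_1(x)$, and since for $\lambda<0$ the only spectrum below $\lambda$ consists of the finitely many negative eigenvalues $\mu_{1,h}<\dots<\mu_{K_h^\lambda,h}$ with $L^2$-normalized eigenfunctions $\psi_{k,h}$, the spectral theorem gives
\begin{equation*}
\sum_{k=1}^{K_h^\lambda}(\lambda-\mu_{k,h})^\gamma\psi_{k,h}^2(x)=\big[f_\lambda\big(\mathcal{H}_{1,h}(V_1)\big)\big](x,x).
\end{equation*}
Establishing the representation then amounts to computing the leading semi-classical behaviour of this diagonal kernel uniformly for $x$ in the compact set $\Omega_1$.

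The first technical step I would carry out is to handle the mild non-smoothness of $f_\lambda$ at $t=\lambda$: it is only $C^{\lfloor\gamma\rfloor}$ there, so I would bracket it between smooth, compactly supported functions agreeing with $f_\lambda$ on $\{t\le\lambda-\varepsilon\}$ and vanishing on $\{t\ge\lambda+\varepsilon\}$, apply the smooth semi-classical functional calculus to those, and let $\varepsilon\to0$ after $h\to0$. The $h$-pseudodifferential calculus then yields, uniformly in $x\in\Omega_1$,
\begin{equation*}
\big[f_\lambda\big(\mathcal{H}_{1,h}(V_1)\big)\big](x,x)=\frac{1}{2\pi h}\int_{\mathbb{R}}\big(\lambda-p^2+V_1(x)\big)_+^\gamma\,dp+\mathcal{R}_h(x),\qquad \mathcal{R}_h=o\!\big(h^{-1}\big),
\end{equation*}
where smoothness of $V_1$ and compactness of $\Omega_1$ are what make the remainder controllable. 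The leading term is a routine Beta-integral: with $p=\sqrt{\lambda+V_1(x)}\,u$ one gets $\frac{1}{2\pi h}\big(\lambda+V_1(x)\big)_+^{\gamma+\frac12}\int_{-1}^{1}(1-u^2)^\gamma\,du=\frac{1}{2\pi h}\big(\lambda+V_1(x)\big)_+^{\gamma+\frac12}\sqrt{\pi}\,\frac{\Gamma(\gamma+1)}{\Gamma(\gamma+\frac32)}=\frac{1}{h}L^{cl}_{1,\gamma}\big(\lambda+V_1(x)\big)_+^{\gamma+\frac12}$, which is exactly where the stated form of the universal constant $L^{cl}_{1,\gamma}$ comes from.

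Combining the two displays, $\dfrac{h}{L^{cl}_{1,\gamma}}\sum_{k=1}^{K_h^\lambda}(\lambda-\mu_{k,h})^\gamma\psi_{k,h}^2(x)\longrightarrow\big(\lambda+V_1(x)\big)_+^{\gamma+\frac12}$ as $h\to0$, uniformly on $\Omega_1$; raising to the power $\frac{2}{1+2\gamma}=\frac{1}{\gamma+1/2}$ and adding $-\lambda$ recovers $V_1(x)$ wherever $V_1(x)\ge-\lambda$ (and $-\lambda$ on the complement), which is the asserted representation, understood in the limit $h\to0$. I expect the genuine obstacle to be the uniform remainder estimate $\mathcal{R}_h=o(h^{-1})$ on $\Omega_1$: one must control the parametrix construction both near the classical turning points $p^2=\lambda+V_1(x)$ and near the threshold energy $t=\lambda$ where $f_\lambda$ is not smooth, and verify that the error stays strictly below the $h^{-1}$ main term uniformly in $x$. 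By comparison, the Beta-integral and the final power manipulation are bookkeeping.
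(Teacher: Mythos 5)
Your argument is sound, but note first that the paper itself offers no proof of Proposition \ref{prop1}: it is imported verbatim from \cite{Helffer2011}, and the only proof the paper contains is of the two-dimensional analogue, Theorem \ref{Main_th}. Measured against that proof, your skeleton is the same — identify $\sum_k(\lambda-\mu_{k,h})^\gamma\psi_{k,h}^2(x)$ with the diagonal of the spectral function, invoke a Karadzhov-type pointwise Weyl asymptotic uniformly on the compact set (the $1$D counterpart of Theorem \ref{th_HL2D}), and then undo the power $\gamma+\tfrac12$ and subtract $\lambda$. Where you genuinely diverge is in how the constant is pinned down: the paper leaves the Weyl coefficient as the abstract phase--space volume $c_\gamma=\int(1-\eta^2)_+^\gamma\,d\eta$ and identifies $(2\pi)^{-1}c_\gamma=L^{cl}_{1,\gamma}$ only indirectly, by integrating the spectral function over $\Omega$ and matching the result against the Helffer--Robert Riesz-mean asymptotics (Theorem \ref{thConv1}, \cite{Helffer1990}); you instead evaluate the momentum integral directly as a Beta function, $\int_{-1}^1(1-u^2)^\gamma du=\sqrt{\pi}\,\Gamma(\gamma+1)/\Gamma(\gamma+\tfrac32)$, which is more elementary, makes the origin of $L^{cl}_{1,\gamma}$ transparent, and dispenses with the Riesz-mean input altogether. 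Two caveats, both of which you essentially flag yourself: the recovery of $V_1$ (rather than of $-\lambda$) requires $V_1>-\lambda$ on $\Omega_1$, a hypothesis the Proposition omits but which is explicit in condition \eqref{ConditionLambda} of the $2$D theorem and should be read into the $1$D statement as well; and the uniform remainder bound $o(h^{-1})$ across the turning points and the non-smooth threshold of $(\lambda-t)_+^\gamma$ is exactly the content of Karadzhov's theorem \cite{Karadzhov1986}, which you sketch via the smooth-bracketing device but do not prove — the paper does not prove it either, so you are on equal footing there.
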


If a signal is interpreted as a potential $V_1$ of the Schr\"odinger operator,  then the formula $(\ref{SCSAhg})$ can be used for signal analysis and reconstruction. Indeed
the efficiency of the proposed signal estimation method and the influence of the parameters $\lambda$, $\gamma$ and $h$ have been studied  in \cite{Helffer2011}. In particular,   as it is described in \cite{Helffer2011} and \cite{Laleg-Kirati2013}, the semi-classical parameter $h$ plays a key role in this approach. In fact, when $h$ decreases, the estimation  $V_{1,h,\gamma,\lambda}$ improves. Since the study of the Schr\"{o}dinger operator in the case where $h$ tends to $0$ is referred to the semi-classical
analysis \cite{Dimassi1999}, this justifies the name \textbf{S}emi-\textbf{C}lassical \textbf{S}ignal \textbf{A}nalysis that we give to this method \cite{Helffer2011,Laleg-Kirati2013}.

Let us point out that the formula given in (\ref{SCSAhg}) is still valid in the case where $\lambda=0$ and it gives good results. For exemple, the following formula:
\begin{equation}
    V_{1,h,\frac{1}{2},0}(x)=4h\sum_{k=1}^{K_h^{0}}(-\mu_{k,h})^{\frac{1}{2}}\psi_{k,h}^{2}(x),
\end{equation}
was successfully used in the analysis of arterial blood pressure signal in \cite{Laleg-Kirati2007, Laleg-Kirati2010,Laleg-Kirati2013}, and the analysis of the performance of turbo machines in \cite{Eleiwi2011}.


\section{A Two-dimensional Schr\"odinger operator: Asymptotic results}\label{sec3}

From now on, we consider the following $2$D semi-classical Schr\"odinger operator associated to a potential $V_2$:
\begin{equation}\label{Schro2D}
     \mathcal{H}_{2,h}(V_{2})\psi= -h^{2}\Delta\psi-V_{2}\psi, \quad \psi \in \mathbf{H}^2(\mathbb{R}^2),
\end{equation}
where $\Delta:=\frac{\partial^{2}}{\partial x^{2}}+\frac{\partial^{2}}{\partial y^{2}}$ is the $2$D Laplacien operator, $h\in \mathbb{R}^{*}_{+}$ is the semi-classical parameter \cite{Dimassi1999}, and $V_{2}$ is a positive real valued function belonging to  $\mathcal{C}^{\infty}(\Omega_{2})$ where  $\Omega_{2}\subset \mathbb{R}^2$ is compact. $\mathbf{H}^2(\mathbb{R}^2)$ is the sobolev space of order $2$.

Then, inspired from semi-classical properties of the 2D Schr\"odinger operator \cite{Helffer1990}, \cite{Karadzhov1986}, the extension of the SCSA formula to the  $2$D case is given by the following theorem.

\begin{theorem} \label{Main_th}
Let $V_2$  be a positive real valued $\mathcal{C}^{\infty}$ function on a bounded open set $]a,b[\times]c,d[$ considered as potential of Schr\"odinger operator $\left( \ref{Schro2D} \right)$. Then, for any pair $(\Omega_2,\lambda)$ such that $\Omega_2$ is compact and
\begin{equation} \label{ConditionLambda}
\left\{
\begin{array}{l}
\lambda < \inf (V_2 (a,c), V_2(b,d)) \,, \\
V_2(]a,b[\times]c,d[) \subset ]- \lambda, +\infty[\,, \\
-\lambda \mbox{ is not a critical value of } V_2, \text{$($for more details see $\cite{Helffer1990}$$)$}%
\end{array}
\right.
\end{equation}
and, uniformly for $(x,y)\in \Omega_2$, we have
\begin{equation} \label{scsa2d}
V_2(x,y)=-\lambda+ \lim_{h\rightarrow0}  \left(\frac{h^{2}}{L^{cl}_{2,\gamma}} \sum_{k=1}^{K_{h}^{\lambda}} \left(\lambda-\mu_{k,h}\right)^{\gamma}\psi^{2}_{k,h}(x,y)\right)^{\frac{1}{1+\gamma}},
\end{equation}
where $\gamma\in \mathbb{R}_{+}^{*}$, and $L^{cl}_{2,\gamma}$ is the suitable universal semi-classical constant given by
\begin{equation}\label{Lcl}
 L^{cl}_{2,\gamma}=\frac{1}{2^{2}\pi}\frac{\Gamma(\gamma+1)}{\Gamma(\gamma+2)},
\end{equation}
and $\Gamma$ refers to the standard Gamma function.

Moreover, $\mu_{k,h}$ and $\psi_{k,h}$ denote the negative eigenvalues with $\mu_{1,h} < \cdots <\mu_{K_{h}^{\lambda},h}< \lambda$, $K_{h}^{\lambda}$ is a finite number of the negative eigenvalues smaller than $\lambda$, and associated $L^2$-normalized eigenfunctions of the operator $\mathcal{H}_{2,h}(V_{2})$ such that:
\begin{equation}\label{Eq_spec2D}
\mathcal{H}_{2,h}(V_{2})\psi_{k,h} = \mu_{k,h}\psi_{k,h}, \quad k=1,\cdots,K_h^{\lambda}.
\end{equation}
\end{theorem}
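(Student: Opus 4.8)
The plan is to reduce the identity (\ref{scsa2d}) to a single uniform semi-classical asymptotic for a pointwise Riesz mean of the spectral function of $\mathcal{H}_{2,h}(V_2)$, and then to dispose of the remaining algebra. As a first step I would note that $t\mapsto -\lambda+t^{1/(1+\gamma)}$ is continuous on $[0,+\infty)$, and that the second requirement in (\ref{ConditionLambda}) forces $V_2+\lambda>0$ on $\Omega_2$; hence, by uniform continuity of $t\mapsto t^{1/(1+\gamma)}$ on bounded intervals, the assertion (\ref{scsa2d}) is equivalent to
\begin{equation*}
\lim_{h\to 0}\; h^{2}\sum_{k=1}^{K_{h}^{\lambda}}(\lambda-\mu_{k,h})^{\gamma}\,\psi_{k,h}^{2}(x,y)=L^{cl}_{2,\gamma}\,\bigl(V_2(x,y)+\lambda\bigr)^{1+\gamma},
\end{equation*}
the limit being uniform for $(x,y)\in\Omega_2$.

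The core of the argument is to establish this limit. I would recognise the left-hand sum as the value at energy $\lambda$ of the diagonal Riesz mean $R_{\gamma,h}(x,y;\tau):=\sum_{k}(\tau-\mu_{k,h})_{+}^{\gamma}\psi_{k,h}^{2}(x,y)$ of the discrete spectrum of $\mathcal{H}_{2,h}(V_2)$ (legitimate since $\mu_{k,h}<\lambda$ for $k\le K_h^\lambda$), and invoke the two-dimensional semi-classical asymptotics for such Riesz means due to Karadzhov \cite{Karadzhov1986} and Helffer \cite{Helffer1990}. The hypotheses gathered in (\ref{ConditionLambda}) are exactly those under which these results apply: $-\lambda$ being a non-critical value of $V_2$ makes the classically allowed region $\{(x,y,\xi):|\xi|^{2}<V_2(x,y)+\lambda\}$ a set with smooth, compact fibre over $\Omega_2$, while $\lambda<\inf(V_2(a,c),V_2(b,d))$ together with $V_2>-\lambda$ on the box ensures that the eigenfunctions contributing to $R_{\gamma,h}(\cdot;\lambda)$ are localised inside $\{V_2+\lambda>0\}$, so that the implicit extension of $V_2$ to a potential on all of $\mathbb{R}^{2}$ in (\ref{Schro2D}) is harmless. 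These results give, uniformly on $\Omega_2$,
\begin{equation*}
h^{2}R_{\gamma,h}(x,y;\lambda)=(2\pi)^{-2}\int_{\mathbb{R}^{2}}\bigl(V_2(x,y)+\lambda-|\xi|^{2}\bigr)_{+}^{\gamma}\,d\xi+o(1),\qquad h\to 0,
\end{equation*}
which is the precise two-dimensional counterpart of the one-dimensional estimate underlying Proposition \ref{prop1}.

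It then remains to evaluate the phase-space integral and match the constant. Writing $m:=V_2(x,y)+\lambda>0$ and passing to polar coordinates in $\xi$, with $u=|\xi|^{2}$,
\begin{equation*}
\int_{\mathbb{R}^{2}}\bigl(m-|\xi|^{2}\bigr)_{+}^{\gamma}\,d\xi=\pi\int_{0}^{m}(m-u)^{\gamma}\,du=\frac{\pi}{\gamma+1}\,m^{\gamma+1}=\pi\,\frac{\Gamma(\gamma+1)}{\Gamma(\gamma+2)}\,m^{\gamma+1},
\end{equation*}
so that $(2\pi)^{-2}$ times this integral equals $\frac{1}{2^{2}\pi}\frac{\Gamma(\gamma+1)}{\Gamma(\gamma+2)}(V_2(x,y)+\lambda)^{1+\gamma}=L^{cl}_{2,\gamma}(V_2(x,y)+\lambda)^{1+\gamma}$ by (\ref{Lcl}). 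Inserting this into the displayed asymptotic and letting $h\to 0$ yields the limit of the first step, and, by the equivalence established there, the theorem.

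The only genuinely delicate point is the uniform Weyl asymptotic for the pointwise Riesz mean and, in particular, the control of the remainder up to the boundary of the classically allowed region — this is precisely where the somewhat intricate conditions (\ref{ConditionLambda}) enter, and why we import the corresponding statements from \cite{Karadzhov1986, Helffer1990}; the functional-calculus reduction and the Beta-integral computation are entirely routine.
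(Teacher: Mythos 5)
Your argument is correct and follows the same backbone as the paper's proof: everything rests on the uniform two-dimensional Karadzhov-type asymptotic for the diagonal Riesz mean of the spectral function (the paper's Theorem \ref{th_HL2D}), after which the conclusion follows from the positivity of $V_2+\lambda$ on $\Omega_2$ and the continuity of $t\mapsto t^{1/(1+\gamma)}$, exactly as you argue. The one place where you genuinely diverge is the identification of the universal constant. The paper leaves the constant in the form $(2\pi)^{-2}c_\gamma$ with $c_\gamma=\int_{\mathbb{R}^2}(1-\eta^2-\eta'^2)_+^{\gamma}\,d\eta\,d\eta'$ and pins it down indirectly: it integrates the pointwise asymptotic over $\Omega_2$, recognizes the result as the Riesz mean $S_\gamma(h,\lambda)$, and compares with the Helffer--Robert asymptotic (Theorem \ref{thConv1}) to conclude $(2\pi)^{-2}c_\gamma=L^{cl}_{2,\gamma}$. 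You instead evaluate the phase-space integral explicitly in polar coordinates, obtaining $\int_{\mathbb{R}^2}(m-|\xi|^2)_+^{\gamma}\,d\xi=\pi m^{1+\gamma}/(1+\gamma)$, which together with $\Gamma(\gamma+1)/\Gamma(\gamma+2)=1/(1+\gamma)$ matches $L^{cl}_{2,\gamma}$ as given in (\ref{Lcl}); this computation is correct. Your route is the more economical one: it dispenses with Theorem \ref{thConv1} entirely and avoids the slightly delicate step in the paper of integrating the $L^2$-normalized eigenfunctions over the compact set $\Omega_2$ rather than over all of $\mathbb{R}^2$. What the paper's detour buys in exchange is a consistency check between the pointwise (Karadzhov-type) and integrated (Helffer--Robert) asymptotics. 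In both versions the real mathematical burden is the uniform remainder control in Theorem \ref{th_HL2D} under the hypotheses (\ref{ConditionLambda}), which you, like the paper, import from \cite{Karadzhov1986} and \cite{Helffer1990} rather than prove.
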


We propose to show the convergence of formula (\ref{scsa2d}) when the semi-classical parameter $h$ converges to 0. The proof is a generalization of the one proposed in \cite{Helffer2011}.

The following results are used to prove theorem \ref{Main_th}. The next theorem is a generalization to $2D$, of Theorem 4.1 proposed by by Helffer and Laleg in \cite{Helffer2011} which is a suitable extension of Karadzhov's theorem on the spectral function \cite{Karadzhov1986}.

\begin{theorem}\label{th_HL2D}
Let $V_2$ be a real valued $\mathcal {C}^\infty$ function considered as potential of the Schr\"odinger operator
$\left(\ref{Schro2D}\right)$ on a bounded open set $]a,b[\times]c,d[$. Let $e_h^\gamma$, known as spectral function, be defined by: $\forall\left((x,x'),(y,y')\right)\in\left( ]a,b[\times]c,d[\right)^2$,
\begin{equation}\label{sep1}
e_h^\gamma(\lambda,x,y,x',y')=\sum_{\mu_{k,h}\leq\lambda} \left(\lambda-\mu_{k,h}\right)^\gamma_+\psi_{k,h}(x,y)\psi_{k,h}(x',y'),
\end{equation}
when $h\rightarrow0$. $\mu_{k,h}$ and $\psi_{k,h}$ refer to the decreasing negative eigenvalues less than $\lambda$, and associated $L^2$-normalized eigenfunctions of the operators $\mathcal{H}_{2,h}(V_{2})$ respectively.

Then, for any pair $(\Omega_2,\lambda)$ satisfying $\left(\ref{ConditionLambda}\right)$, we have:
\begin{equation}\label{sep2}
e_h^\gamma(\lambda,x,y,x,y)=(2\pi)^{-2}\left(\lambda+V_2(x,y)\right)^{1+\gamma}_+c_\gamma h^{-2}+\mathcal{O}(h^\gamma),\,\,\,\,h\rightarrow0,
\end{equation}
uniformly in $\Omega_2$, where 
\begin{equation*}
c_\gamma = \int_{\mathbb{R}^2} (1 -\eta^2-\eta^{'2})_+^{\gamma} d\eta\,d\eta' \,,
\end{equation*}
and $(\cdot)_+$ refers to the positive part.
\end{theorem}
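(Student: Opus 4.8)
The plan is to recognise $e_h^\gamma(\lambda,x,y,x',y')$ as the Schwartz kernel of the operator $f_\lambda\big(\mathcal{H}_{2,h}(V_2)\big)$ with $f_\lambda(t)=(\lambda-t)_+^\gamma$, and to compute its restriction to the diagonal by semiclassical functional calculus, following the line of argument of Helffer--Laleg's Theorem~4.1 (itself an adaptation of Karadzhov's theorem) but carrying out every momentum integral in two dimensions. Since $\mathcal{H}_{2,h}(V_2)=-h^2\Delta-V_2$ is a semiclassical pseudodifferential operator with symbol $p(x,y,\xi,\eta)=\xi^2+\eta^2-V_2(x,y)$, I would first use hypothesis (\ref{ConditionLambda}) — namely that $-\lambda$ is not a critical value of $V_2$ and that $\lambda+V_2>0$ on $\Omega_2$ — to localise in a neighbourhood of $\Omega_2$ on which the symbol $f_\lambda(p)=(\lambda+V_2(x,y)-\xi^2-\eta^2)_+^\gamma$ is compactly supported in $(\xi,\eta)$, uniformly in $(x,y)$. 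That compact support is what makes the parametrix construction and the ensuing integrals converge; the interior localisation also removes any contribution of $\partial(]a,b[\times]c,d[)$.

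Next I would build an approximate functional calculus for $f_\lambda(\mathcal{H}_{2,h})$. The cleanest route is an almost-analytic (Helffer--Sjöstrand) representation $f_\lambda(\mathcal{H}_{2,h})=-\tfrac1\pi\int_{\mathbb{C}}\bar\partial\tilde f_\lambda(z)\,(z-\mathcal{H}_{2,h})^{-1}\,L(dz)$, into which one inserts a semiclassical parametrix for the resolvent $(z-\mathcal{H}_{2,h})^{-1}$ and then resums. Because $f_\lambda$ has only finite regularity at $t=\lambda$, one must use an almost-analytic extension of finite order and track the resulting loss; this is precisely the mechanism that turns the naive $\mathcal{O}(h^{-1})$ first correction into a genuinely small $\mathcal{O}(h^{\gamma})$ remainder, since the subprincipal contributions carry derivatives of $f_\lambda$ whose singular parts, after integration in $(\xi,\eta)$ across the turning circle $\{\xi^2+\eta^2=\lambda+V_2(x,y)\}$, cancel up to order $h^\gamma$. (Alternatively one may represent $(\lambda-t)_+^\gamma$ by a contour or heat-semigroup integral and invoke the known semiclassical expansion of $e^{-s\mathcal{H}_{2,h}}$, which avoids almost-analytic extensions at the cost of more bookkeeping; either way the 1D computation of Helffer--Laleg upgrades line by line.)

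The leading term then comes from the principal symbol: the diagonal of the kernel of a semiclassical pseudodifferential operator with symbol $a$ equals $(2\pi h)^{-2}\int_{\mathbb{R}^2}a(x,y,\xi,\eta)\,d\xi\,d\eta$ to leading order, so
$$e_h^\gamma(\lambda,x,y,x,y)=(2\pi h)^{-2}\int_{\mathbb{R}^2}\big(\lambda+V_2(x,y)-\xi^2-\eta^2\big)_+^{\gamma}\,d\xi\,d\eta+(\text{remainder}).$$
The substitution $(\xi,\eta)=\sqrt{\lambda+V_2(x,y)}\,(\eta',\eta'')$ factors out $\big(\lambda+V_2(x,y)\big)_+^{1+\gamma}$ and leaves exactly $c_\gamma=\int_{\mathbb{R}^2}(1-\eta^2-\eta'^2)_+^\gamma\,d\eta\,d\eta'$; since $(2\pi h)^{-2}=(2\pi)^{-2}h^{-2}$ this yields the stated main term $(2\pi)^{-2}\big(\lambda+V_2(x,y)\big)_+^{1+\gamma}c_\gamma h^{-2}$. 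Uniformity on $\Omega_2$ is automatic, because every estimate depends only on a fixed finite number of derivatives of $V_2$ and on a positive lower bound for $\lambda+V_2$, both of which are uniform on the compact set $\Omega_2$ under (\ref{ConditionLambda}).

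I expect the main obstacle to be the uniform remainder estimate, i.e. showing the correction is $\mathcal{O}(h^\gamma)$ uniformly in $\Omega_2$. In two dimensions the turning set $\{\xi^2+\eta^2=\lambda+V_2(x,y)\}$ is a circle — a one-parameter family of glancing points — rather than the two isolated turning points of the 1D problem, and $\{V_2=-\lambda\}$ is a curve in the $(x,y)$-plane; keeping the parametrix and its $h$-expansion under control uniformly up to these sets is exactly where the hypothesis that $-\lambda$ is a regular value of $V_2$ is essential, as it guarantees $\{V_2=-\lambda\}$ is a smooth hypersurface with non-vanishing gradient so that the relevant symbol divisions and (non-)stationary phase estimates remain uniformly bounded. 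Apart from this point, the proof is a careful but routine dimensional lift of the Helffer--Laleg argument.
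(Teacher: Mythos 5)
First, a point of comparison: the paper does not actually prove Theorem \ref{th_HL2D}. It is stated without proof, as a two-dimensional generalization of Theorem 4.1 of Helffer and Laleg \cite{Helffer2011} (itself an adaptation of Karadzhov's theorem \cite{Karadzhov1986}), and is then used together with Theorem \ref{thConv1} as an ingredient in the proof of Theorem \ref{Main_th}. So your proposal is not an alternative to an argument in the paper; it is an attempt to supply an argument the paper omits. Within that attempt, the structural identification of $e_h^\gamma(\lambda,\cdot)$ as the diagonal kernel of $f_\lambda(\mathcal{H}_{2,h}(V_2))$ with $f_\lambda(t)=(\lambda-t)_+^\gamma$, the localisation permitted by $(\ref{ConditionLambda})$, and the leading-term computation (the rescaling $(\xi,\eta)=\sqrt{\lambda+V_2}\,(\eta,\eta')$ producing $(2\pi h)^{-2}(\lambda+V_2)_+^{1+\gamma}c_\gamma$) are all correct and recover the principal term of $(\ref{sep2})$.

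The genuine gap is exactly where you locate it, and it is not a ``careful but routine dimensional lift.'' The sharp pointwise remainder for Riesz means of order $\gamma$ of the spectral function in dimension $n$ is $\mathcal{O}(h^{1+\gamma-n})$ against a leading term of size $h^{-n}$: for $n=1$ this is the $\mathcal{O}(h^{\gamma})$ of Helffer--Laleg/Karadzhov, but the same argument transported to $n=2$ delivers only $\mathcal{O}(h^{\gamma-1})$, a full power of $h$ short of the $\mathcal{O}(h^{\gamma})$ asserted in $(\ref{sep2})$. Your sketch attributes the missing power to cancellations of ``singular parts across the turning circle'' in the subprincipal terms, but this is asserted rather than proved, and it is precisely the nontrivial content of the theorem in two dimensions; the vanishing of the subprincipal symbol of $-h^2\Delta-V_2$ does not by itself supply it, since the $h^{1-n}$-order error originates in the finite regularity of $f_\lambda$ at $t=\lambda$, not in the symbolic expansion of a smooth function of the operator. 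Until that extra power of $h$ is extracted explicitly, the proof establishes only the weaker remainder. (You may take some comfort in the fact that the paper itself is inconsistent here: $(\ref{sep2})$ claims $\mathcal{O}(h^{\gamma})$ while its restatement $(\ref{eq1})$ in the proof of Theorem \ref{Main_th} writes $\mathcal{O}(h^{2+\gamma})$; since only the leading terms are used to identify $(2\pi)^{-2}c_\gamma$ with $L^{cl}_{2,\gamma}$ and to pass to the limit $h\rightarrow 0$, the main theorem survives with the weaker, correct remainder --- but a proof of Theorem \ref{th_HL2D} must either establish the stated $\mathcal{O}(h^{\gamma})$ or weaken the claim.)
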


\begin{theorem} \label{thConv1}$\cite{Helffer1990}$
Let $V_2$ be a real valued function considered as potential of the Shcr\"odinger operator $\left(\ref{Schro2D}\right)$ belonging to $\mathcal{C}^{\infty}(\mathbb{R}^2)$, with
\begin{equation}
-\infty< \inf\, V_2< \liminf \limits_{\substack{|x| \to +\infty \\ |y| \to +\infty}}V_2.
\end{equation}

Let $\lambda\in ] \inf\, V_2,\liminf \limits_{\substack{|x| \to +\infty \\ |y| \to +\infty}}V_2.[$ and suppose that $-\lambda$ is not a critical value for $V_2$ and $h$ be a semi-classical parameter. We denote by:
\begin{equation}
S_\gamma(h,\lambda)=\sum_{\mu_{k,h}\leq\lambda}(\lambda-\mu_{k,h})^\gamma,\,\,\,\,\gamma\geq0,
\end{equation}
the Riesz means of the decreasing eigenvalues $\mu_{k,h}$ less than $\lambda$ of the Schr\"odinger operators $\mathcal{H}_{2,h}(V_{2})$. Then for $\gamma>0$, we have:
\begin{equation}
S_\gamma(h,\lambda)=\frac{1}{h^2}\left(L_{2,\gamma}^{cl}\int_{\mathbb{R}^2}(\lambda+V_2(x,y))_+^{1+\gamma}dx\,dy+\mathcal{O}(h^{2+\gamma}) \right),\,\,\,\,h\rightarrow0,
\end{equation}
where $(\cdot)_+$ is the positive part and $L_{2,\gamma}^{cl}$, known as the suitable universal semi-classical constant, is given by $\left(\ref{Lcl}\right)$.
\end{theorem}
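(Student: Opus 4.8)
The plan is to derive the integrated (Riesz--means) asymptotics of Theorem \ref{thConv1} from the pointwise diagonal asymptotics of the spectral function already established in Theorem \ref{th_HL2D}, by integrating in the configuration variables. The starting observation is that the Riesz means are nothing but the spatial integral of the diagonal spectral function: setting $(x',y')=(x,y)$ in (\ref{sep1}) gives $e_h^\gamma(\lambda,x,y,x,y)=\sum_{\mu_{k,h}\leq\lambda}(\lambda-\mu_{k,h})^\gamma_+\,\psi_{k,h}^{2}(x,y)$, and integrating over $\mathbb{R}^2$ while using the $L^2$-normalization $\int_{\mathbb{R}^2}\psi_{k,h}^{2}\,dx\,dy=1$ collapses the sum to
\begin{equation*}
\int_{\mathbb{R}^2}e_h^\gamma(\lambda,x,y,x,y)\,dx\,dy=\sum_{\mu_{k,h}\leq\lambda}(\lambda-\mu_{k,h})^\gamma=S_\gamma(h,\lambda).
\end{equation*}
Thus it suffices to integrate the right-hand side of (\ref{sep2}) and to control the error uniformly.

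First I would reconcile the two normalizing constants. Passing to polar coordinates $\eta=r\cos\theta$, $\eta'=r\sin\theta$ and substituting $u=r^2$, the constant $c_\gamma$ of Theorem \ref{th_HL2D} evaluates to
\begin{equation*}
c_\gamma=\int_{\mathbb{R}^2}(1-\eta^2-\eta'^2)_+^{\gamma}\,d\eta\,d\eta'=2\pi\int_0^1(1-r^2)^\gamma r\,dr=\pi\,\frac{1}{\gamma+1},
\end{equation*}
so that $(2\pi)^{-2}c_\gamma=\frac{1}{4\pi(\gamma+1)}$, which is exactly $L^{cl}_{2,\gamma}$ in (\ref{Lcl}) once one writes $\Gamma(\gamma+2)=(\gamma+1)\Gamma(\gamma+1)$. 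Hence the leading term of (\ref{sep2}) is $L^{cl}_{2,\gamma}(\lambda+V_2(x,y))_+^{1+\gamma}h^{-2}$.

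Next I would split $\mathbb{R}^2$ into the classically allowed region $\mathcal{A}=\{(x,y):\lambda+V_2(x,y)>0\}$ and its complement. Under the hypotheses of Theorem \ref{thConv1} (behaviour of $V_2$ at infinity and $-\lambda$ not a critical value) the closure $\overline{\mathcal{A}}$ is compact, so one may fix a compact $\Omega_2\supset\overline{\mathcal{A}}$ satisfying (\ref{ConditionLambda}). On $\Omega_2$ the asymptotics (\ref{sep2}) hold uniformly, and since $\Omega_2$ has finite measure, integrating termwise yields
\begin{equation*}
\int_{\Omega_2}e_h^\gamma(\lambda,x,y,x,y)\,dx\,dy=h^{-2}L^{cl}_{2,\gamma}\int_{\mathbb{R}^2}(\lambda+V_2)_+^{1+\gamma}\,dx\,dy+\mathcal{O}(h^\gamma),
\end{equation*}
where the leading integral extends to all of $\mathbb{R}^2$ because $(\lambda+V_2)_+$ vanishes off $\mathcal{A}$; this is precisely $\frac{1}{h^2}\big(L^{cl}_{2,\gamma}\int_{\mathbb{R}^2}(\lambda+V_2)_+^{1+\gamma}\,dx\,dy+\mathcal{O}(h^{2+\gamma})\big)$, the claimed formula.

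It remains to discard the tail $\int_{\mathbb{R}^2\setminus\Omega_2}e_h^\gamma\,dx\,dy$, and this is where I expect the real work to lie. For eigenvalues $\mu_{k,h}\leq\lambda$ below the bottom of the essential spectrum, Agmon-type estimates give exponential decay of the eigenfunctions away from $\overline{\mathcal{A}}$, so that $\int_{\mathbb{R}^2\setminus\Omega_2}\psi_{k,h}^{2}\,dx\,dy\leq e^{-c/h}$ for some $c>0$; since the weights $(\lambda-\mu_{k,h})^\gamma$ are uniformly bounded (the $\mu_{k,h}$ are trapped between a fixed lower bound and $\lambda$) and the number of contributing eigenvalues is $K_h^\lambda=\mathcal{O}(h^{-2})$, the whole tail is $\mathcal{O}(h^{-2}e^{-c/h})=\mathcal{O}(h^\infty)$, hence negligible against $\mathcal{O}(h^\gamma)$. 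The main obstacle is thus twofold: establishing the uniform-in-$\Omega_2$ remainder whose sharp order $\mathcal{O}(h^\gamma)$ reflects the limited smoothness of $t\mapsto(\lambda-t)_+^\gamma$ at the threshold $t=\lambda$ (this is the content imported from Theorem \ref{th_HL2D}), together with the quantitative concentration bound needed to guarantee the tail is exponentially small uniformly over the $\mathcal{O}(h^{-2})$ relevant eigenfunctions.
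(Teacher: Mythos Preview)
The paper does not prove Theorem \ref{thConv1} at all: it is quoted verbatim from Helffer--Robert \cite{Helffer1990} and used as a black box in the proof of Theorem \ref{Main_th}. So there is no ``paper's own proof'' to compare against.

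Your argument is nonetheless interesting because it runs \emph{opposite} to the logic the paper actually employs. In the proof of Theorem \ref{Main_th}, the authors integrate the pointwise asymptotics (\ref{sep2}) over $\Omega_2$ to obtain $S_\gamma(h,\lambda)$ with the unknown constant $(2\pi)^{-2}c_\gamma$ in front, then invoke Theorem \ref{thConv1} as an \emph{independent} input to identify $(2\pi)^{-2}c_\gamma=L^{cl}_{2,\gamma}$. You instead evaluate $c_\gamma=\pi/(\gamma+1)$ directly by polar coordinates, which makes Theorem \ref{thConv1} a corollary of Theorem \ref{th_HL2D} (plus a tail estimate) rather than a separate ingredient. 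What your route buys is economy: once the pointwise spectral-function asymptotics are granted, the Riesz-means asymptotics follow for free and the constant identification in the proof of Theorem \ref{Main_th} becomes redundant. What the paper's route buys is robustness: by citing \cite{Helffer1990} directly one does not need to justify the Agmon-type tail bound $\int_{\mathbb{R}^2\setminus\Omega_2}e_h^\gamma\,dx\,dy=\mathcal{O}(h^\infty)$, which in your sketch is the only place where genuine additional analysis (uniform exponential decay over $\mathcal{O}(h^{-2})$ eigenfunctions) is required. One caution: be sure that the proof of Theorem \ref{th_HL2D} you are importing does not itself rely on the Riesz-means result of \cite{Helffer1990}, or your derivation becomes circular.
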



\begin{proof}{$\mathbf{of\,theorem\, \ref{Main_th}}$}~\newline

We will obtain the proof by using a suitable extension of Karadzhov's theorem \cite{Karadzhov1986} on the spectral function (Theorem \ref{th_HL2D}) and some Riesz means connected to a Lieb-Thirrings conjucture proposed by Helffer and Robert \cite{Helffer1990} (Theorem \ref{thConv1}). 

First, by combining the formulas (\ref{sep1}) and (\ref{sep2}), we find: $\forall (x,y) \in \Omega_2$

\begin{equation}\label{eq1}
\sum_{\mu_{k,h}\leq\lambda}\left(\lambda-\mu_{k,h}\right)_+^{\gamma} \, \psi_{k,h}^2(x,y)= (2\pi h)^{-2}(\lambda+V_2(x,y))^{1+\gamma}_+ c_\gamma + \mathcal{O}(h^{2+\gamma})
\end{equation}
where $h \rightarrow 0$, and $(\cdot)_+$ is the positive part, and
\begin{equation}
c_\gamma = \int_{\mathbb{R}^2} (1 -\eta^2-\eta'^2)_+^{\gamma} d\eta\,d\eta'.
\end{equation}

Now, let's find a simple expression for $c_\gamma$. By integrating the right part of the equation (\ref{eq1}) over $x$ and $y$, we get:

\begin{equation*}
\begin{split}
  \int_{\Omega_2} \left( \sum_{\mu_{k,h}\leq\lambda}\left(\lambda-\mu_{k,h}\right)^\gamma_+\psi_{k,h}^2(x,y)\right)dx\,dy = & \sum_{\mu_{k,h}\leq\lambda}\left(\lambda-\mu_{k,h}\right)^\gamma_+,\,\,\,\, h\rightarrow0\\
  =& \, S_\gamma(h,\lambda),\,\,\,\, h\rightarrow0.
  \end{split}
\end{equation*}

We also have:
\begin{equation*}
  \begin{split}
   & \int_{\Omega_2} h^{-2}\left(\lambda+V_2(x,y)\right)_+^{1+\gamma}(2\pi)^{-2}c_{\gamma}dx\,dy+\mathcal{O}(h^{2+\gamma})\\
   = & \int_{\Omega_2} \sum_{\mu_{k,h}\leq \lambda}\left(\lambda-\mu_{k,h}\right)_+^\gamma\psi_{k,h}^2(x,y)dx\,dy,\,\,\,\, h\rightarrow0\\
   = &\, S_\gamma(h,\lambda),\,\,\,\, h\rightarrow0.
  \end{split}
\end{equation*}

Therefore, for $\gamma>0$, and by using Theorem \ref{thConv1} we obtain:
\begin{equation*}
  \begin{split}
   & \int_{\Omega_2} h^{-2}\left(\lambda+V_2(x,y)\right)_+^{1+\gamma}(2\pi)^{-2}c_{\gamma}dx\,dy+\mathcal{O}(h^{2+\gamma}) = \\
   & \qquad \qquad \qquad \frac{L_{2,\gamma}^{cl}}{h^2}\int_{\Omega_2} \left(\lambda+V_2(x,y)\right)_+^{1+\gamma}dx\,dy+\mathcal{O}(h^{2+\gamma}),\,\,\,\,h\rightarrow0.
  \end{split}
\end{equation*}

Which implies:
\begin{equation*}
\begin{split}
(2\pi)^{-2}c_\gamma &= L^{cl}_{2,\gamma},\\
 &=\frac{1}{2^{2}\pi}\frac{\Gamma(\gamma+1)}{\Gamma(\gamma+2)}.
\end{split}
\end{equation*}

Finally, when the semi-classical parameter $h$ converges to $0$, and as the potential $V_2$ is positive, then the $2$D SCSA formula is given by: $\forall(x,y)\in \Omega_2$

\begin{equation*}
V_2(x,y)=-\lambda+\lim_{h\rightarrow0}\left(\frac{h^{2}} {L^{cl}_{2\gamma}}\sum_{k=1}^{K_{h}^{\lambda}}\left(\lambda-\mu_{k,h}\right)^{\gamma}\psi^{2}_{k,h}(x,y)\right)^{\frac{1}{1+\gamma}}.
\end{equation*}
\end{proof}
\section{New algorithm for image representation based on 2D SCSA formula}\label{sec4}

In image processing, for some geometrical and topological reasons, it is common and more practical to consider a separation of variables approach to extend the 1D transforms to 2D \cite{Dudgeon1993,Jain1989}. This is the case for example with 2D Fourier transform, which can be written using the tensor product of the 1D complex exponential \cite{Mallat2009} or more recently the Ridgelet transform \cite{Do2003} based on the tensor product of 1D wavelet transform. The separation of variables principle allows the design of efficient and fast algorithms where the representation of the image is done row by row and column by column respectively.

The reconstruction of an image using formula $(\ref{scsa2d})$ requires the computation of eigenvalues and eigenfunctions in 2D which is known to be complex and time consuming. Therefore for sake of simplicity, we propose, in this section, to use the separation of variables principle by splitting the 2D operator into two 1D operators and to solve the eigenvalues problems for these 1D operators.

\subsection{Principle in continuous case}

Let us define, for $(x_0, y_0) \in \Omega_2$ the following 1D operators,
\begin{equation}\label{Schro2Dy0}
   		\mathcal{A}_{x_0, h}(V_{2}(x_0,y))\varphi_{x_0}(y) = -h^{2}\frac{\partial^2 \varphi_{x_0}(y)}{\partial y^2} - \frac{1}{2}V_{2}(x_0,y)\varphi_{x_0}(y),
	\end{equation}
\begin{equation}\label{Schro2Dx0}
   		\mathcal{B}_{y_0,h}(V_{2}(x,y_0))\phi_{y_0}(x)= -h^{2}\frac{\partial^2\phi_{y_0}(x)}{\partial x^2} - \frac{1}{2} V_{2}(x,y_0)\phi_{y_0}(x),
	\end{equation}
such that at fixed $(x,y) = (x_0,y_0)$, the summation of the operators $\mathcal{A}_{x_0,h}$ and
$ \mathcal{B}_{y_0,h}$ gives the 2D Schr\"odinger operator $\mathcal{H}_{2,h}$ evaluated at $(x_0,y_0)$. i.e.;
	\begin{equation}\label{Schro2Dsum}
	\begin{split}
   		&\mathcal{H}_{2,h}(V_{2}(x,y))\psi(x,y) \mid_{x=x_0,\,y=y_0} \\ = & \,\, \mathcal{A}_{x_0,h}(V_{2}(x_0,y) )\varphi_{x_0}(y)\mid_{y=y_0}+\, \mathcal{B}_{y_0,h}(V_{2}(x,y_0))\phi_{y_0}(x)\mid_{x=x_0}.
	\end{split}
	\end{equation}
	
	We also define the following spectral problems, 
	\begin{equation}\label{spec_proby_0}
			\mathcal{A}_{x_0,h}(V_2(x_0,y))\varphi_{x_0,n,h}(y) = \kappa_{x_0,n,h}\varphi_{x_0,n,h}(y),
		\end{equation}
		\begin{equation}\label{spec_probx_0}
			\mathcal{B}_{y_0,h}(V_2(x,y_0))\phi_{y_0,m,h}(x) = \rho_{y_0,m,h}\phi_{y_0,m,h}(x),
		\end{equation}
where $\kappa_{x_0,n,h}$ and $\varphi_{x_0,n,h}$ for $n = 1,\cdots,N^{\lambda}_h$ (resp. $\rho_{y_0,m,h}$ and $\phi_{y_0,m,h}$ for $m = 1,\cdots,M^{\lambda}_h$) are the decreasing negative eigenvalues and associated $L^2$-normalized eigenfunctions of the operator $(\ref{Schro2Dy0})$ (resp. $(\ref{Schro2Dx0})$) and $N^{\lambda}_h$ (resp. $M^{\lambda}_h$) is the number of negative eigenvalues less then $\lambda$.

Multiplying (\ref{spec_proby_0}) by $\phi_{y_0,m,h}(x)$ and (\ref{spec_probx_0})  by $\varphi_{x_0,n,h}(y)$ and adding the results gives
\begin{eqnarray}\label{spec_proby_0x_0}
				\left\{\mathcal{A}_{x_0,h}(V_2(x_0,y)) +\mathcal{B}_{y_0,h}(V_2(x,y_0))\right\} \varphi_{x_0,n,h}(y)\phi_{y_0,m,h}(x) \nonumber\\
			= (\kappa_{x_0,n,h}+ \rho_{y_0,m,h}) \varphi_{x_0,n,h}(y)\phi_{y_0,m,h}(x),
		\end{eqnarray}

	In particular for $x=x_0$ and $y=y_0$ and using (\ref{Eq_spec2D}) and (\ref{Schro2Dsum}) we have:
	\begin{eqnarray}\label{spec_res}
			 \varphi_{x_0,n,h}(y_0)\phi_{y_0,m,h}(x_0) &=& \psi_{k,h} (x_0,y_0), \\
			 \kappa_{x_0,n,h}+ \rho_{y_0,m,h} & = & \mu_{k,h},
		\end{eqnarray}
 with $k = 1,\cdots, K_h^{\lambda}$, $n=1,\cdots, N_h^{\lambda}$ and $m=1,\cdots,M_h^{\lambda}$ and  $K_h^{\lambda}=N_h^{\lambda}\times M_h^{\lambda}$.

		So formula (\ref{scsa2d}) can be written as follows, 
\begin{equation}
\scriptstyle V_2(x_0,y_0) = \scriptstyle -\lambda+ \scriptstyle \displaystyle
 \lim_{h\rightarrow0} \scriptstyle \left(\frac{h^{2}}{L^{cl}_{2,\gamma}} \displaystyle \sum_{n=1}^{N_{h}^{\lambda}} \displaystyle \sum_{m=1}^{M_{h}^{\lambda}} \scriptstyle
 \left(\lambda-({\kappa}_{x_0,n,h}+{\rho}_{y_0,m,h})\right)^{\gamma}{\varphi}^{2}_{x_0,n,h}(y_0){\phi}^{2}_{y_0,m,h}(x_0)\right) ^{\frac{1}{1+\gamma}},
\end{equation}

This formula shows that the 2D function $V_2$ can be estimated at $(x_0,y_0)$ from a tensor products resulting from solving spectral problems for 1D\\
 Schr\"odinger operators. 


\subsection{Application to images }
We denote  $I$ an image on space of square matrices $\mathcal{M}_{N \times N}(\mathbb{R}^+)$. The discretization of the eigenvalue problem ($\ref{Eq_spec2D}$) is given by the following eigenvalue problem,
\begin{equation}\label{Sch_sep}
		{H}_{2,h}(I)\, \underline{\psi}_{([i,j],k,h} = {\mu}_{[i,j],k,h}\,\underline{\psi}_{(i,j),k,h},
\end{equation}
where ${\mu}_{[i,j],k,h}$ and $\underline{\psi}_{[i,j],k,h}$, for $k = 1,\cdots,K_h^{\lambda}$ with $K_h^{\lambda}< N \times N$, refer to the negative eigenvalues with ${\mu}_{[i,j],1,h}<\cdots<{\mu}_{[i,j],K_h^{\lambda},h}<\lambda$ and associated $l^2$-normalized eigenvectors respectively of the 2D discretized semi-classical Schr\"odinger operator ${H}_{2,h}$ and $i,j = 1,\cdots,N$ refer to the $i^{th}$ row and $j^{th}$ column of the matrix respectively.

To solve the 2D eigenvalue problem $(\ref{Sch_sep})$ and as described in the previous subsection, the idea consists in solving 1D eigenvalues problems. This means for the image, solving the problem rows by rows and columns by columns which simplifies the computations in terms of complexity and computation time and especially allows for parallel computing.

In discrete case, the operators $(\ref{Schro2Dy0})$ and $(\ref{Schro2Dx0})$ are given respectively by:
	\begin{equation}\label{Schro2Di}
   		A_{i, h}(I[i,:])\underline{\varphi}_{i} = -h^{2}D_2\underline{\varphi}_{i} - \text{diag}\left(\frac{1}{2}I[i,:]\right)\underline{\varphi}_{i},
	\end{equation}
	\begin{equation}\label{Schro2Dj}
   		B_{j, h}(I[:,j]) \underline{\phi}_{j}= -h^{2}D_2 \underline{\phi}_{j} - \text{diag}\left(\frac{1}{2}I[:,j]\right)\underline{\phi}_{j},
	\end{equation}
where $D_2$ is a second order differentiation matrix obtained using the Fourier pseudo-spectral method $\cite{Boyd2000, Trefethen2000}$, $\text{diag}\left( \frac{1}{2}I[i,:] \right)$ and $\text{diag}\left( \frac{1}{2}I[:,j]\right)$ are the diagonal matrix of the 1D signal for the $i^{th}$ row and $j^{th} column$ respectively. 

Then the associated spectral problems are given by,
	\begin{equation}\label{spec_probi}
		A_{i, h}(I[i,:]) \underline{\varphi}_{i,n,h} = \kappa_{i,n,h}\underline{\varphi}_{i,n,h},
	\end{equation}
	\begin{equation}\label{spec_probj}
		B_{j, h}(I[:,j]) \underline{\phi}_{j,m,h} = \rho_{j,m,h}\underline{\phi}_{j,m,h},
	\end{equation}

In particular, for the pixel $[i,j]$, we solve the eigenvalue problem $(\ref{spec_probi})$ (resp. $(\ref{spec_probj})$), and then we take all the negative eigenvalues $\kappa_{i,n,h}$ (resp. $\rho_{j,m,h}$) and the $j^{th}$ (resp. $i^{th}$) associated $l^2$-normalized eigenvectors $\underline{\varphi}_{i,n,h}$ for $n = 1,\cdots,N^h_{\lambda}$ (resp. $\underline{\phi}_{j,m,h}$ for $m = 1,\cdots,M^h_{\lambda}$). Hence, we obtain, 
	\begin{eqnarray}\label{spec_res_ij}
			\underline{\varphi}_{i,n,h}[j]\underline{\phi}_{j,m,h}[i] &=& \underline{\psi}_{k,h}[i,j], \\
			 \kappa_{i,n,h}+ \rho_{j,m,h} & = & \mu_{k,h},
		\end{eqnarray}

Then, based on the Theorem \ref{Main_th}, the reconstruction of the image is done pixel by pixel as it is often the case in image processing as follows:

\begin{proposition} \label{SCS2D_Def}
Let $I \in \mathcal{M}_{N\times N} (\mathbb{R}_+)$ be a positive real valued square matrix. Then, the representation of $I$ using the SCSA method is given by the following formula: $\forall\,(i,j)\in \{1,2,\cdots,N\}^2$,
\begin{equation}\label{scsa2ddis}
\scriptstyle I_{h,\gamma,\lambda}[i,j]= -\lambda+ \left(\frac{h^{2}}{L^{cl}_{2,\gamma}}  \displaystyle \sum_{n=1}^{N_{h}^{\lambda}}\displaystyle  \sum_{m=1}^{M_{h}^{\lambda}} \left(\lambda-({\kappa}_{i,n,h}+{\rho}_{j,m,h})\right)^{\gamma}\underline{\varphi}^{2}_{i,n,h}[j]\underline{\phi}^{2}_{j,m,h}[i]\right) ^{\frac{1}{1+\gamma}},
\end{equation}
where $h\in \mathbb{R}^{*}_{+}$, $\gamma\in \mathbb{R}_{+}$, $ \lambda \in \mathbb{R}_{-}$, and $L^{cl}_{2,\gamma}$, known as the suitable universal semi-classical constant, is given by $ \left(\ref{Lcl} \right)$.
Moreover, ${\kappa}_{i,n,h}$ $\left( \text{resp. } {\rho}_{j,m,h} \right)$ are the negative eigenvalues of the one dimensional semi-classical Schr\"odinger operator given by $(\ref{Schro2Di})$, $\left( \text{resp. } (\ref{Schro2Dj})\right)$ with ${\kappa}_{i,1,h} < \cdots <{\kappa}_{i,N_{h}^{\lambda},h}< \lambda$ \\
$\left( \text{resp. } {\rho}_{j,1,h} <\cdots <{\rho}_{j,M_{h}^{\lambda},1}< \lambda \right )$, $N_{h}^{\lambda}$ $ \left(\text{resp. } M_{h}^{\lambda}\right)$ is the number of the negative eigenvalues smaller than $\lambda$, and $\underline{\varphi}_{i,n,h}$ $\left( \text{resp. } \underline{\phi}_{j,m,h} \right)$ are the associated $l^2$-normalized eigenvectors.
\end{proposition}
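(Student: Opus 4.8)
The plan is to obtain $(\ref{scsa2ddis})$ as the discrete, separated-variables counterpart of the reconstruction formula $(\ref{scsa2d})$ of Theorem \ref{Main_th}. First I would discretize: replace the continuous operator $\mathcal{H}_{2,h}(V_2)$ by the pseudo-spectral matrix $H_{2,h}(I)$ of $(\ref{Sch_sep})$, so that the infinitely many negative eigenpairs are replaced by the finitely many $(\mu_{k,h},\underline{\psi}_{k,h})$, $k=1,\dots,K_h^{\lambda}$, with $K_h^{\lambda}<N^2$. Under this discretization the bracketed expression in $(\ref{scsa2d})$ becomes, at pixel $[i,j]$, the finite sum $\frac{h^{2}}{L^{cl}_{2,\gamma}}\sum_{k=1}^{K_h^{\lambda}}(\lambda-\mu_{k,h})^{\gamma}\,\underline{\psi}^{2}_{k,h}[i,j]$, and it remains to re-express this sum through the $1$D spectral data of the row and column operators $A_{i,h}$ and $B_{j,h}$.

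Second, I would invoke the splitting identity $(\ref{Schro2Dsum})$ in its discrete form: at the pixel $[i,j]$ the action of $H_{2,h}(I)$ coincides with the sum of the actions of $A_{i,h}(I[i,:])$ and $B_{j,h}(I[:,j])$. Multiplying the $1$D spectral problems $(\ref{spec_probi})$ and $(\ref{spec_probj})$ respectively by $\underline{\phi}_{j,m,h}$ and $\underline{\varphi}_{i,n,h}$ and adding, exactly as in $(\ref{spec_proby_0x_0})$, produces after evaluation at row $i$ and column $j$ the tensorization relations recorded in $(\ref{spec_res_ij})$, namely $\underline{\psi}_{k,h}[i,j]=\underline{\varphi}_{i,n,h}[j]\,\underline{\phi}_{j,m,h}[i]$ and $\mu_{k,h}=\kappa_{i,n,h}+\rho_{j,m,h}$, with the index $k$ ranging over all pairs $(n,m)$, whence the counting identity $K_h^{\lambda}=N_h^{\lambda}\times M_h^{\lambda}$.

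Third, I would substitute: each summand $(\lambda-\mu_{k,h})^{\gamma}\underline{\psi}^{2}_{k,h}[i,j]$ becomes $(\lambda-(\kappa_{i,n,h}+\rho_{j,m,h}))^{\gamma}\,\underline{\varphi}^{2}_{i,n,h}[j]\,\underline{\phi}^{2}_{j,m,h}[i]$, and the single sum $\sum_{k=1}^{K_h^{\lambda}}$ turns into the double sum $\sum_{n=1}^{N_h^{\lambda}}\sum_{m=1}^{M_h^{\lambda}}$; together with the constant $L^{cl}_{2,\gamma}$ of $(\ref{Lcl})$ this is precisely $(\ref{scsa2ddis})$. For well-definedness I would note that $L^{cl}_{2,\gamma}>0$ for $\gamma\ge 0$, that $\lambda-\mu_{k,h}>0$ while the eigenvector entries are real, so the bracketed quantity is a nonnegative real number whose $(1+\gamma)$-th root is unambiguously defined, and that the positivity of $I$ together with $\lambda\in\mathbb{R}_-$ keeps $I_{h,\gamma,\lambda}[i,j]$ consistent with the range of $I$ (the case $\lambda=0$ being admissible, as in the $1$D remark).

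The point that must be stated carefully rather than proved is that the pseudo-spectral $2$D Laplacian together with a genuinely two-dimensional diagonal potential does not split globally as a Kronecker sum $A\otimes\mathrm{Id}+\mathrm{Id}\otimes B$; the separation is only pixelwise, inherited from $(\ref{Schro2Dsum})$, so $(\ref{scsa2ddis})$ should be read as the \emph{definition} of the SCSA reconstruction $I_{h,\gamma,\lambda}$ assembled from $1$D spectra, not as an exact spectral identity for $H_{2,h}(I)$. Its fidelity, i.e. $I_{h,\gamma,\lambda}[i,j]\to I[i,j]$ as $h\to 0$, is what Theorem \ref{Main_th} underwrites at the continuous level; the discrete quantitative behaviour (dependence on $h$, $\gamma$, $\lambda$ and $N$) I would defer to the numerical study of the subsequent sections.
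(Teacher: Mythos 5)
Your proposal follows essentially the same route as the paper: the pixelwise splitting (\ref{Schro2Dsum}) of the 2D operator into the row and column operators (\ref{Schro2Di})--(\ref{Schro2Dj}), the tensorization relations (\ref{spec_res_ij}) obtained by cross-multiplying the 1D spectral problems, and substitution of the resulting products into the discretized form of (\ref{scsa2d}). Your closing caveat---that the separation is only pixelwise and that (\ref{scsa2ddis}) should be read as a definition whose fidelity is underwritten by Theorem \ref{Main_th}---is an accurate reading of what the paper itself does, since the proposition is presented there as a construction rather than as an exact spectral identity for $H_{2,h}(I)$.
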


\subsection{Algorithm description}

The reconstruction of the pixel $[i,j]$ requires solving one dimensional eigenvalue problems corresponding to the row $i$ and the column $j$ respectively. The element $[i,j]$ is then considered twice, which justifies the $\frac{1}{2}$ in the potential's one dimensional operators.

The approach is summarized in the following steps, \\

\quad \textbf{Step 1}: solve the eigenvalues problem (\ref{spec_probi}) with a potential $\frac{1}{2} I[i,:]$, and then take all the negative eigenvalues $\kappa_{i,n,h}$ and the $j^{th}$ associated $l^2$-normalized eigenvectors of $\underline{\varphi}_{i,n,h}$ for $n=1,\cdots,N_h^{\lambda}$.\\

 \quad \textbf{Step 2}: solve the eigenvalues problem (\ref{spec_probj}) with a potential $\frac{1}{2}I[:,j]$, and then take all the negative eigenvalues ${\rho}_{j,m,h}$ and the $i^{th}$ associated $l^2$-normalized eigenvectors of $\underline{\phi}_{j,m,h}$ for $m = 1,\cdots, M_h^{\lambda}$.\\

\quad \textbf{Step 3}: reconstruct the image using formula (\ref{scsa2ddis}). \\

The figure below illustrates the principle of the proposed algorithm.

\newpage
\setlength{\unitlength}{0.8cm}
\begin{picture}(7,3)(-7,-2)
\put(-7,2){The image}
\multiput(-2.5,1)(0.4,0){21} {\line(1,0){0.2}}%
\multiput(-2.5,0.8)(1.4,0){7} {\line(0,1){0.2}}
\multiput(-2.5,0.4)(1.4,0){7} {\line(0,1){0.2}}
\multiput(-2.5,0.0)(1.4,0){7} {\line(0,1){0.2}}
\multiput(-2.5,-0.4)(1.4,0){7} {\line(0,1){0.2}}
\multiput(-2.5,-0.4)(0.4,0){21} {\line(1,0){0.2}}%
\multiput(-2.5,-0.8)(1.4,0){7} {\line(0,1){0.2}}
\multiput(-2.5,-1.2)(1.4,0){7} {\line(0,1){0.2}}
\multiput(-2.5,-1.6)(1.4,0){7} {\line(0,1){0.2}}
\multiput(-2.5,-2.0)(1.4,0){7} {\line(0,1){0.2}}
\multiput(-2.5,-1.8)(0.4,0){21} {\line(1,0){0.2}}%
\multiput(-2.5,-2.4)(1.4,0){7} {\line(0,1){0.2}}
\multiput(-2.5,-2.8)(1.4,0){7} {\line(0,1){0.2}}
\multiput(-2.5,-3.2)(1.4,0){7} {\line(0,1){0.2}}
\multiput(-2.5,-3.2)(0.4,0){21} {\line(1,0){0.2}}%
\multiput(-2.5,-3.6)(1.4,0){7} {\line(0,1){0.2}}
\multiput(-2.5,-4.0)(1.4,0){7} {\line(0,1){0.2}}
\multiput(-2.5,-4.4)(1.4,0){7} {\line(0,1){0.2}}
\multiput(-2.5,-4.6)(0.4,0){21} {\line(1,0){0.2}}%
\multiput(-2.5,-4.8)(1.4,0){7} {\line(0,1){0.2}}
\multiput(-2.5,-5.2)(1.4,0){7} {\line(0,1){0.2}}
\multiput(-2.5,-5.6)(1.4,0){7} {\line(0,1){0.2}}
\multiput(-2.5,-6)(1.4,0){7} {\line(0,1){0.2}}
\multiput(-2.5,-6)(0.4,0){21} {\line(1,0){0.2}}%
\multiput(-2.5,-6.4)(1.4,0){7} {\line(0,1){0.2}}
\multiput(-2.5,-6.8)(1.4,0){7} {\line(0,1){0.2}}
\multiput(-2.5,-7.2)(1.4,0){7} {\line(0,1){0.2}}
\multiput(-2.5,-7.4)(0.4,0){21} {\line(1,0){0.2}}%
\put(-2.35,0.25){\small{I[1,1]}}
\put(0.5,0.25){\small {I[1,j]}}
\put(4.7,0.25){\small{I[1,N]}}
\put(-2.3,-4){\small{I[i,1]}}
\put(0.5,-4){$\mathbf{I[i,j]}$}
\put(4.7,-4){\small{I[i,N]}}
\put(-2.3,-6.8){\small{I[N,1]}}
\put(0.5,-6.8){\small{I[N,j]}}
\put(4.6,-6.8){\small{I[N,N]}}
\put(0.1,1.15){\line(1,0){1.8}}
\put(0.1,-7.6){\line(0,1){8.75}}
\put(0.1,-7.6){\line(1,0){1.8}}
\put(1.9,-7.6){\line(0,1){8.75}}
\put(-2.65,-3){\line(1,0){8.75}}
\put(-2.65,-4.8){\line(0,1){1.8}}
\put(-2.65,-4.8){\line(1,0){8.75}}
\put(6.1,-4.8){\line(0,1){1.8}}
\put(-4.4,-4){$i^{th}$ row}
\put(0,-8.3){$j^{th}$ column}
%
%
\put(-7,-9){For the $i^{th}$row}
\put(-6,-12){$\left( \begin{array}{c} \frac{1}{2}{[i,1]} \\
 \cdot \\
 \cdot \\
 \frac{1}{2}I[i,j] \\
 \cdot \\
 \cdot \\
 \frac{1}{2}I[i,N] \\ \end{array} \right)$}
\put(-4.5,-11.85){\oval(1.9,0.8)}
\put(-2.55,-10){$\left(\begin{array}{cccccccccc}  \kappa_{i,1,h} & & \quad \,\, \kappa_{i,2,h} & & \cdot & \,\,\, \kappa_{i,N_h^{\lambda},h} & & 0 & \cdot & 0 \,\,\, \end{array} \right)$}
\put(2.7,-9.9){\oval(10,0.7)}
\put(-2.65,-13){$\left( \begin{array}{ccccccc} \varphi_{i,1,h}[1] & \varphi_{i,2,h}[1] & \cdot &\varphi_{i,N_h^{\lambda},h}[1] &0 &\cdot & 0 \\
 \cdot & \cdot & \cdot & \cdot & \cdot & \cdot & \cdot \\
 \cdot & \cdot & \cdot & \cdot & \cdot & \cdot & \cdot \\
 \varphi_{i,1,h}[j] & \varphi_{i,2,h}[j] & \cdot & \varphi_{i,N_h^{\lambda},h}[j] & 0 &\cdot & 0\\
 \cdot & \cdot & \cdot & \cdot & \cdot & \cdot & \cdot \\
 \cdot & \cdot & \cdot & \cdot & \cdot & \cdot & \cdot \\
 \varphi_{i,1,h}[N] & \varphi_{i,2,h}[N] & \cdot &\varphi_{i,N_h^{\lambda},h}[N] & 0 & \cdot & 0 \\\end{array} \right)$}
\put(2.7,-12.85){\oval(10,0.7)}
%
%
\put(-7,-16.5){For the $j^{th}$colunm}
\put(-6,-19.5){$\left( \begin{array}{c} \frac{1}{2}I[1,j] \\
\cdot \\
\cdot \\
\frac{1}{2}I[i,j] \\
\cdot \\
\cdot \\
\frac{1}{2}I[N,j] \\ \end{array} \right)$}
\put(-4.5,-19.4){\oval(1.9,0.8)}
\put(-2.6,-17.5){$\left(\begin{array}{cccccccccc} \quad \rho_{j,1,h} & \, & \rho_{j,2,h} & & \, \cdot & \,\,\,\rho_{j,M_h^{\lambda},h} & & 0& \,\, \cdot &  0 \,\,\,\end{array} \right)$}
\put(2.7,-17.37){\oval(10,0.7)}
\put(-2.65,-20.5){$\left( \begin{array}{ccccccc} \phi_{i,1,h}[1] & \phi_{i,2,h}[1] & \cdot & \phi_{i,M_h^{\lambda},h}[1] &\ 0 & \cdot & 0 \\
 \cdot &\cdot &\cdot &\cdot & \cdot &\cdot & \cdot \\
 \cdot &\cdot & \cdot & \cdot & \cdot &\cdot & \cdot \\
 \phi_{i,1,h}[i] & \phi_{j,2,h}[i] & \cdot & \phi_{j,M_h^{\lambda},h}[i] & 0 & \cdot & 0 \\
 \cdot &\cdot & \cdot &\cdot & \cdot &\cdot & \cdot \\
 \cdot & \cdot &\cdot &\cdot & \cdot &\cdot & \cdot \\
 \phi_{i,1,h}[N] & \phi_{j,2,h}[N] &\cdot & \phi_{i,M_h^{\lambda},h}[N] & 0 &\cdot & 0 \\ \end{array} \right)$}
\put(2.7,-20.37){\oval(10,0.7)}
\put(-4,-26){Fig. 0. Principle behind the 2D SCSA algorithm.}
\end{picture}

\newpage


Based on the above discussions, the proposed algorithm may be stated as follows.\\
\vspace{2pt}\hrule\vspace{4pt}
\noindent \textbf{  Algorithm 1:} The 2D SCSA algorithm\par\nobreak
\vspace{2pt}\hrule\vspace{4pt}

$\,\,\,$ \small{\textbf{Input:} The image to be analyzed}\\

$\,\,\,$ \small{\textbf{Output:} Estimated image}\\

$\,\,\,$ Following are the steps of the algorithm:

$\quad$ \small{\textbf{Step 1:} Initialize  $h$, $\lambda$ and $\gamma$.}\\

$\quad$ \small{\textbf{Step 2:} Discretize the Laplace operator $D_2$.}\\

$\quad$ \small{\textbf{Step 3:} Solve 1D eigenvalue problems $(\ref{Schro2Di})$ and $(\ref{Schro2Dj})$ (for all rows $i$ and columns $j$ with $i,j = 1,\cdots, N$ respectively).}\\

$\quad$ \small{\textbf{Step 4: }Reconstruct the image using formula $(\ref{scsa2ddis})$}\\

\vspace{2pt}\hrule\vspace{4pt}

\section{Numerical results}\label{sec5}
Formula (\ref{scsa2ddis}), depends on three parameters: $\lambda$, $\gamma$ and $h$.
$\lambda$ gives information on the part of the signal to reconstruct \cite{Helffer2011}.
For sake of simplicity, we propose to take $\lambda =0$ in the following. 
Only the semi-classical parameter $h$ affects the computed eigenvalues and eigenfunctions since the operator depends on its values.
Also, it is well-known that the number of negative eigenvalues depends on $h$ such that as $h$ decreases $N_h$ and $M_h$ increases \cite{Helffer2011}. In practice, like the Fourier method, and for practical reasons, there is a trade-off between the number of elementary functions and the desired reconstruction accuracy. From the implementation point of view, it is better to have a good representation of the image with a small enough number of eigenvalues. So we will choose $h$ large enough to have a good reconstruction with a small number of eigenvalues.
Moreover, it has been shown that in $1$D SCSA method, the parameter $\gamma$ may improve the approximation of the signal for a given small number of negative eigenvalues \cite{Helffer2011}.
This means that for a given $h$ (i.e $N_h$, $M_h$), the estimation of the signal can be improved by changing the value of $\gamma$.

The experiments have been carried out on academic functions of two variables and standard testing images for most state-of-art algorithms and the effect of the parameters $\gamma$ and $h$ has been studied numerically. In the following some of this experiments are presented.\\

\noindent\textbf{Example 1.}~\newline
In this example, we consider the following function:
\begin{equation}\label{fct1}
  V_{2}(x,y)=\sin(\frac{1}{2}x^{2}+\frac{1}{4}y^{2}+3)\,\cos(2x+1-e^{y})+1.
\end{equation}
for $(x,y)\in[-1,3]\times[-1,3]$. In discrete case $V_2$ is given by $I$ where $x_i=n T_s$ and $y_j=m T_s$ for $n,m=-50,\cdots,150$ with $T_s=0.02$ and $i,j = 1,\cdots,N$.

Before estimating $I$, we study the influence of the design parameters $h$ and $\gamma$. By taking different values of $h$ and $\gamma$, and by estimating the variation of the mean square errors between $I$ and the estimation $I_{h,\gamma,0}$,
\begin{equation}\label{mse}
MSE = \frac{ \displaystyle \sum_{i=1}^{N} \sum_{j=1}^{N} \left(I[i,j] - I_{h,\gamma,\lambda}[i,j] \right)^2}{N \times N},
\end{equation}
where $N$ is the number of discrete points, we found the existence of a minimum at $h=6\times10^{-3}$ and $\gamma=4$ as illustrated in Figure \ref{ContourFun}. Then, we estimate $I$ using $I_{h,\gamma,0}$ with these optimal parameter values (see Figure \ref{ImageFun}). In particular, we show in Figure \ref{SignalFun} the original signal $I[20,:]$ and the estimated one $I_{0.006,4,0}[20,:]$. Morever, we have shown in Figure \ref{ErrorFun} the relative error between the function and its estimation.

\begin{figure}[h!]
 \centering
 \subfigure[]
 {\includegraphics[height=5cm,width=6cm]{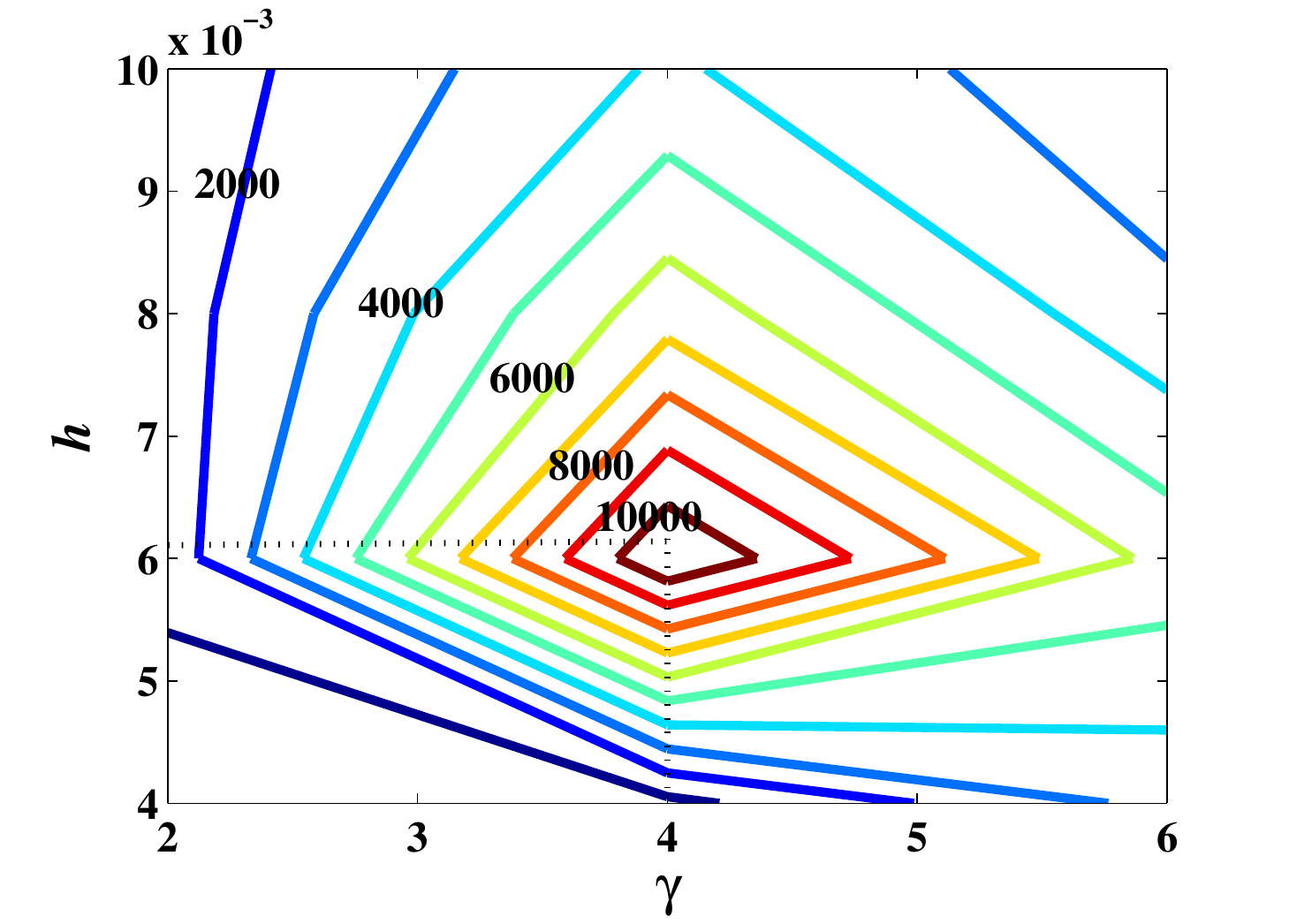}\label{ContourFun}}
 \subfigure[]
 {\includegraphics[height=5cm,width=6cm]{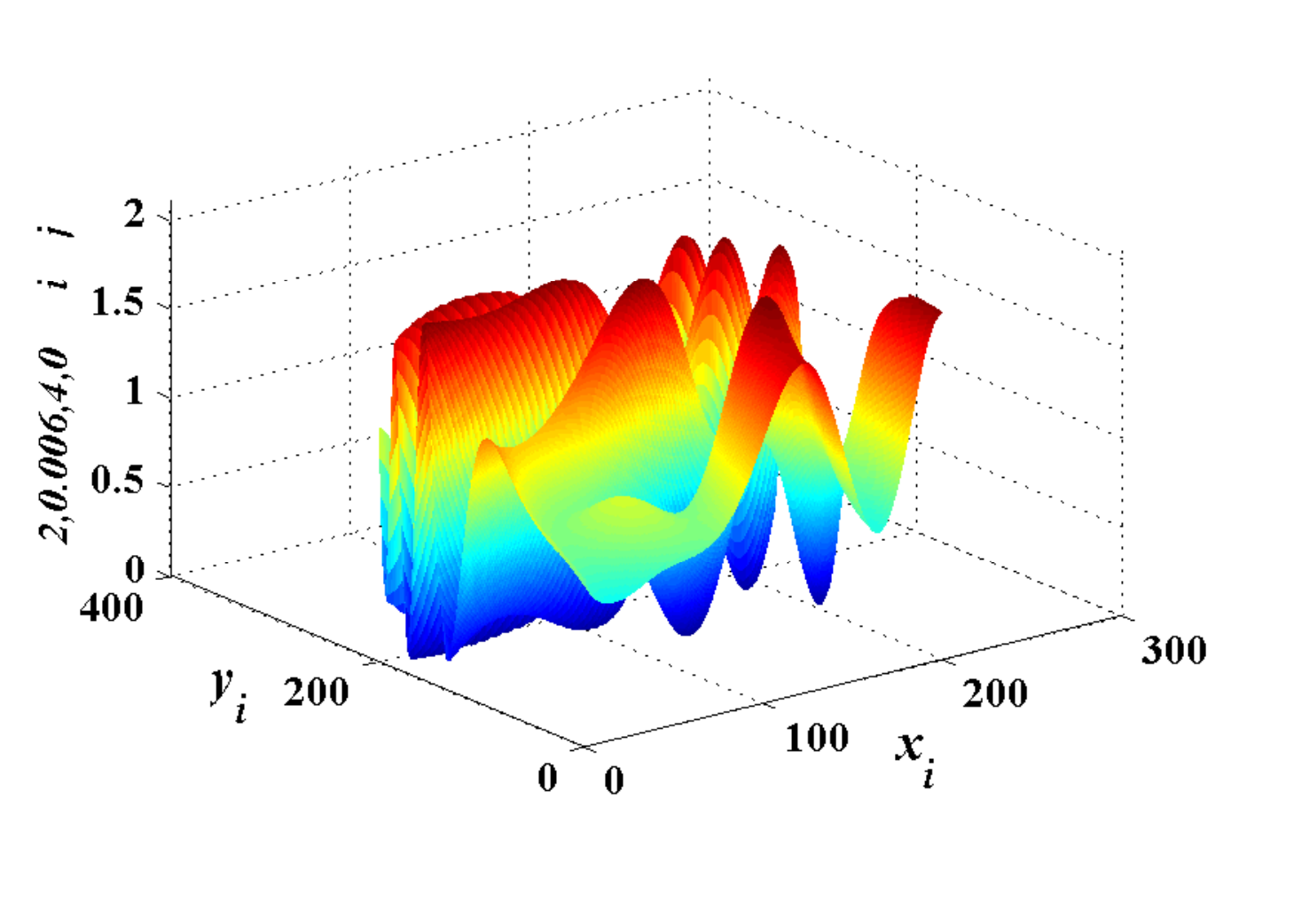}\label{ImageFun}}
 \subfigure[]
 {\includegraphics[height=5cm,width=6cm]{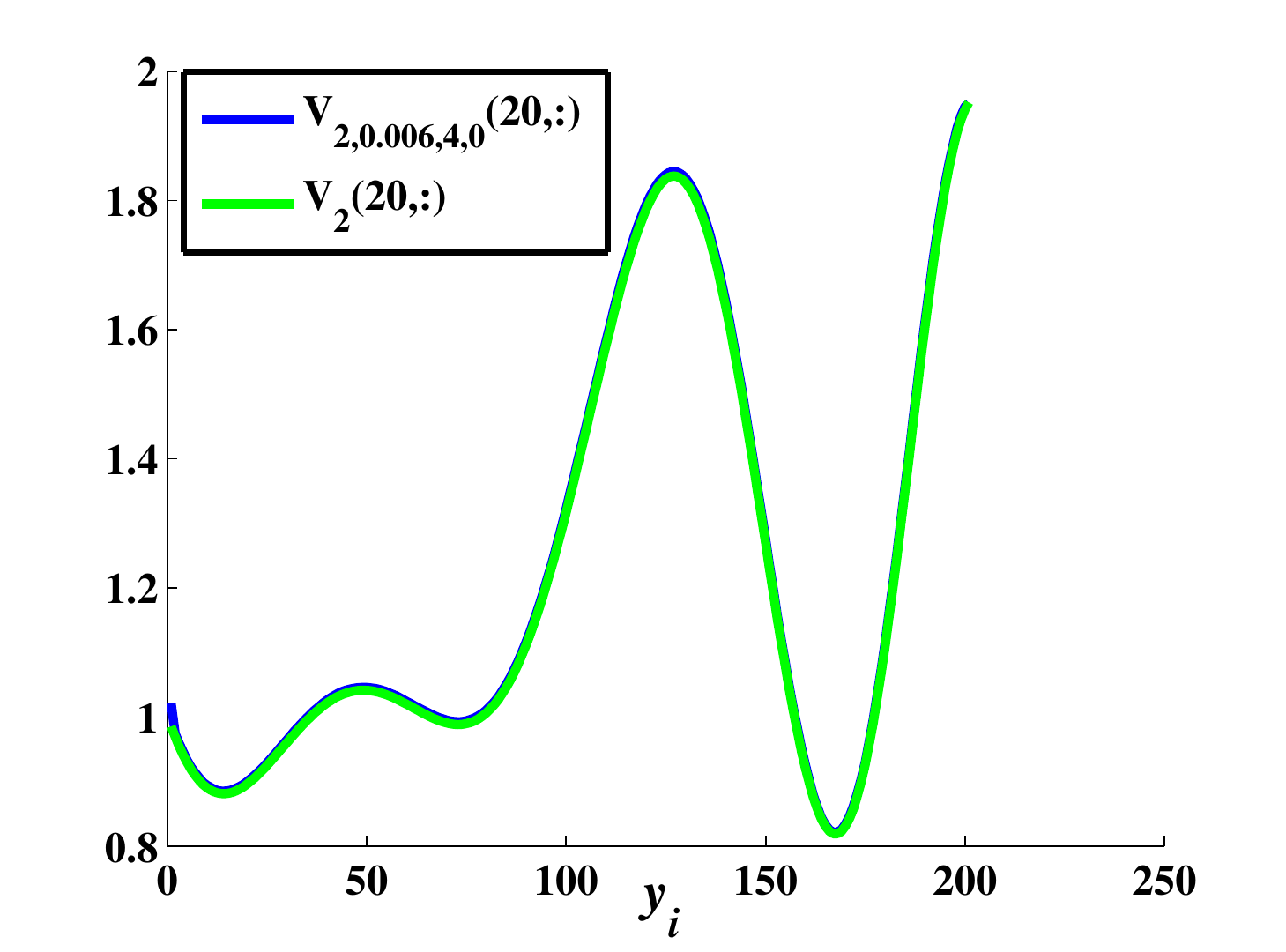}\label{SignalFun}}
 \subfigure[]
 {\includegraphics[height=5cm,width=6cm]{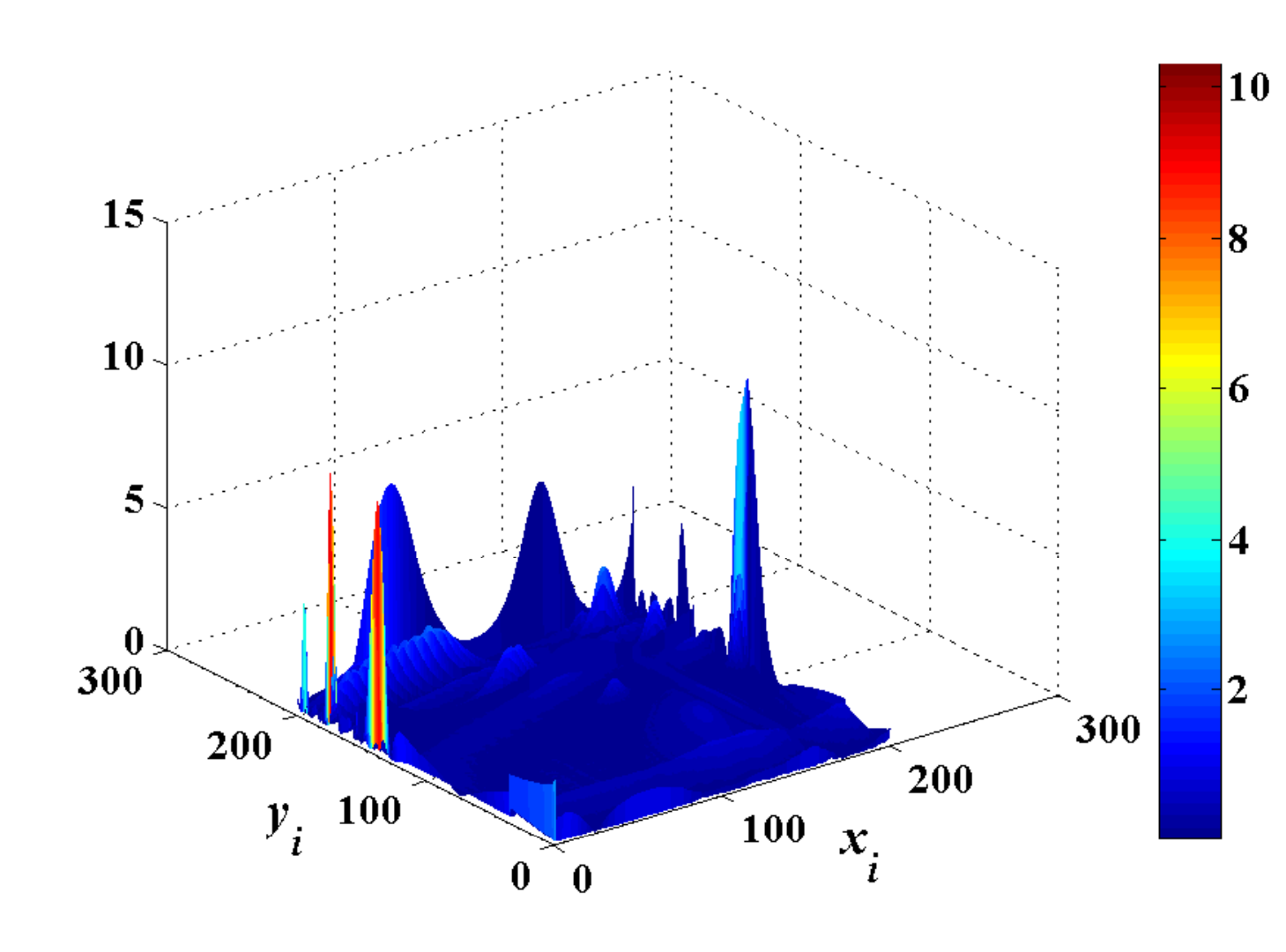}\label{ErrorFun}}

\caption{Example 1: $V_{2}(x,y)=\sin(\frac{1}{2}x^{2}+\frac{1}{4}y^{2}+3)\,\cos(2x+1-e^{y})+1$. $(a)$ The variation of $\frac{1}{\|I-I_{h,\gamma,0}\|^2}$. $(b)$ $I_{0.006,4,0}[i,j]$. $(c)$ $I[20,:]$ and $I_{0.006,4,0}[20,:]$ with $j=1,\cdots,N$. $(d)$ The relative error between the real function and its estimation.}
\end{figure}

\begin{remark}
We tested the algorithm for several examples. The obtained optimal value for $\gamma$ is $\gamma=4$.
\end{remark}

\noindent\textbf{Example 2.}~\newline
In this example, we consider a $440 \times 440$ pixels image, see Figure \ref{OriginalK}.
One can note the good reconstruction of this image in Figure \ref{reconsK}, for $h=0.21$ and $\gamma=4$. The relative error is shown in Figure \ref{ErrorK}.

\begin{figure}[h!]
 \centering
 \subfigure[]
 {\includegraphics[height=4cm,width=4cm]{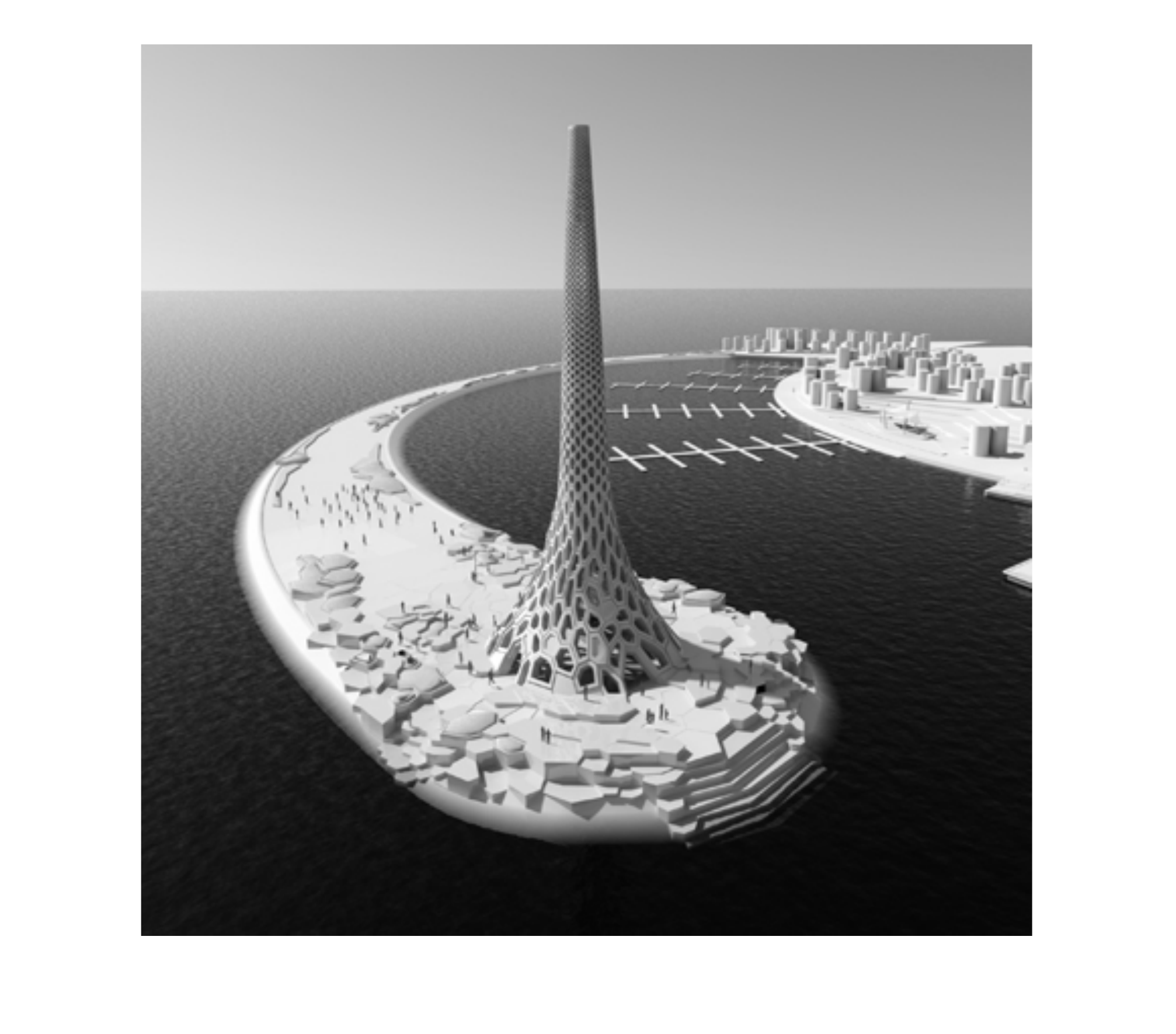}\label{OriginalK}}
 \subfigure[]
 {\includegraphics[height=4cm,width=4cm]{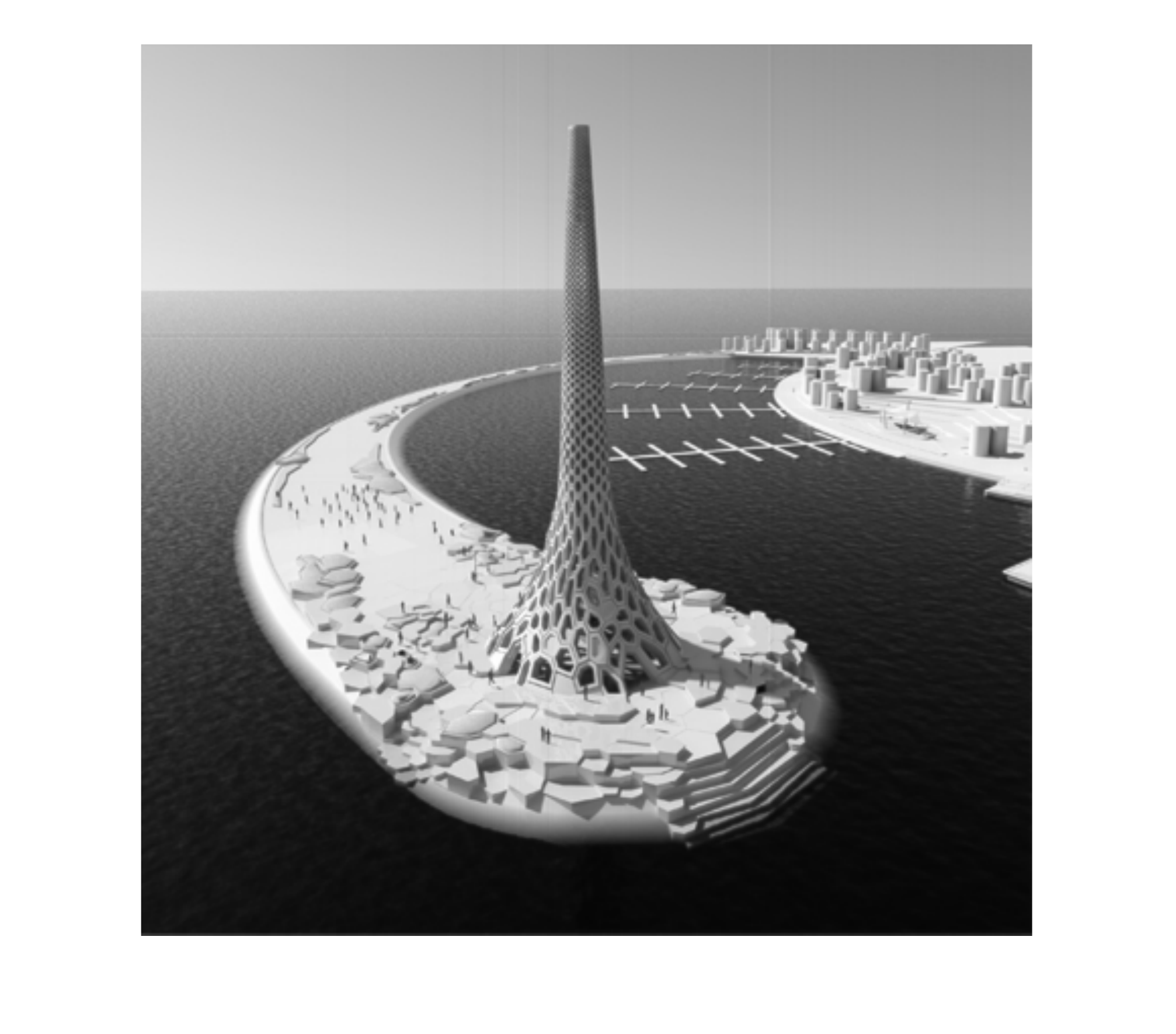}\label{reconsK}}
 \subfigure[]
 {\includegraphics[height=4cm,width=4cm]{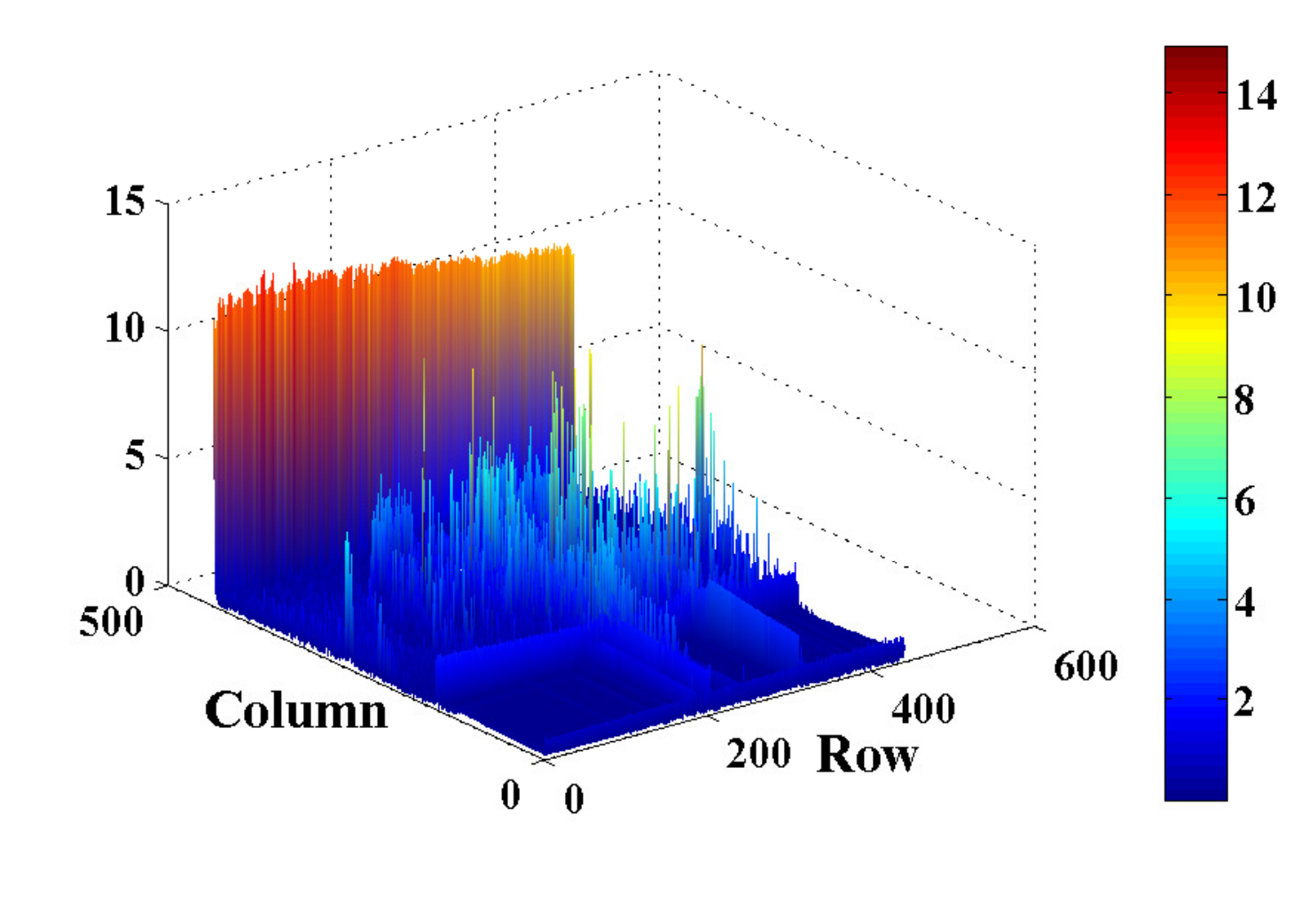}\label{ErrorK}}

 \caption{Example 2: $(a)$ Original image. $(b)$ Reconstructed image. $(c)$ The relative error between the original and reconstructed images.}
\end{figure}

\noindent\textbf{Example 3.}~\newline
In this example, we consider a $512 \times 512$ pixels Lena image \footnote{http://www.ece.rice.edu/~wakin/images/ }, see Figure \ref{OriginalL}.
Figure \ref{ContourL} illustrates the variation of the mean square error $(\ref{mse})$ for different values of $h$ and $\gamma$. The optimal values of $h$ and $\gamma$ are read  $0.2$ and $4$ respectively. Then, the image has been reconstructed using formula $(\ref{scsa2ddis})$ as illustrated in figures \ref{ReconsL} and \ref{ErrorL} respectively. 

	\begin{figure}[h!]
	 \centering
	 \subfigure[]
	 {\includegraphics[height=5cm,width=5cm]{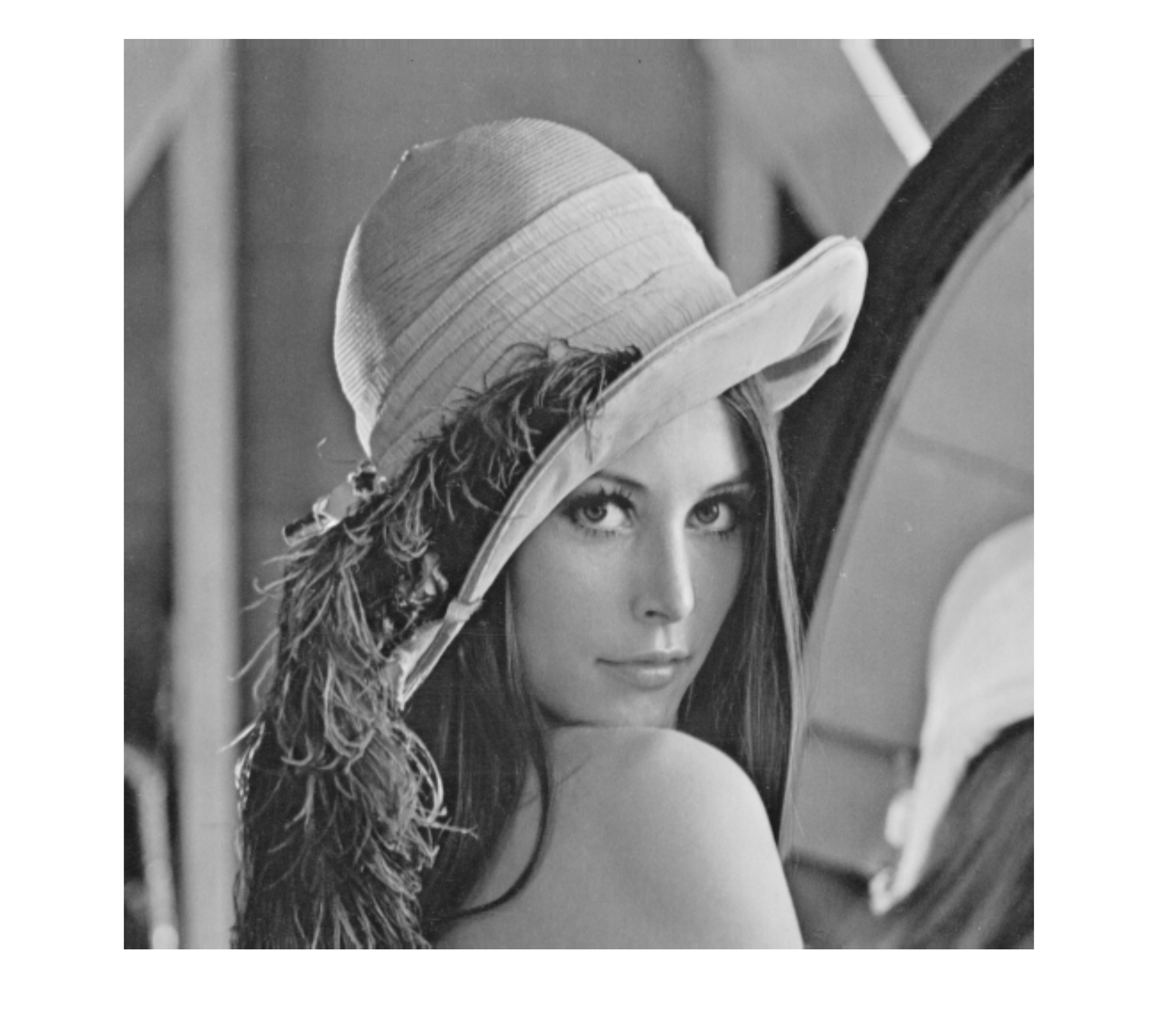}\label{OriginalL}}
	 	 \subfigure[]
	 {\includegraphics[height=5cm,width=5cm]{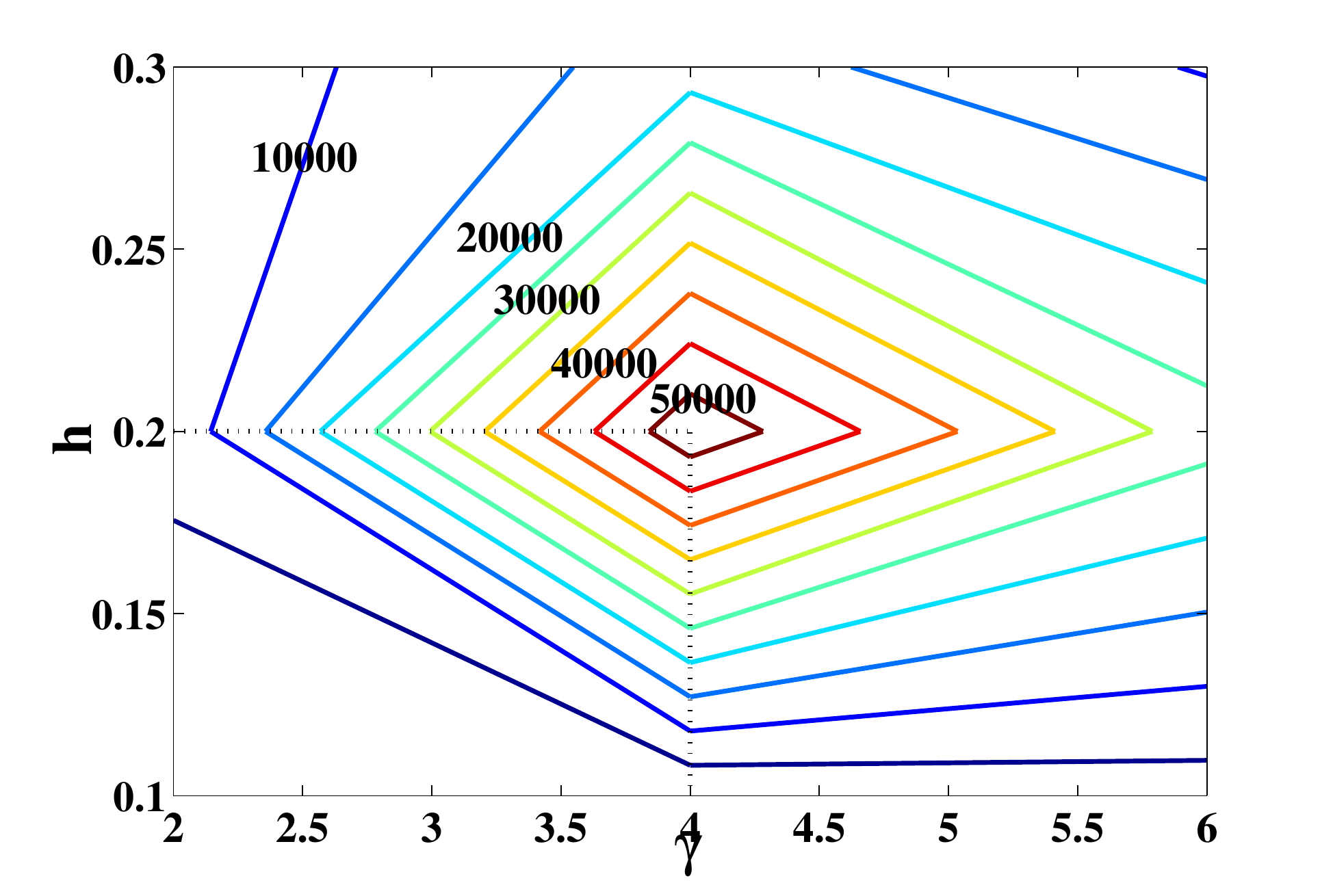}\label{ContourL}}
	 \subfigure[]
	 {\includegraphics[height=5cm,width=5cm]{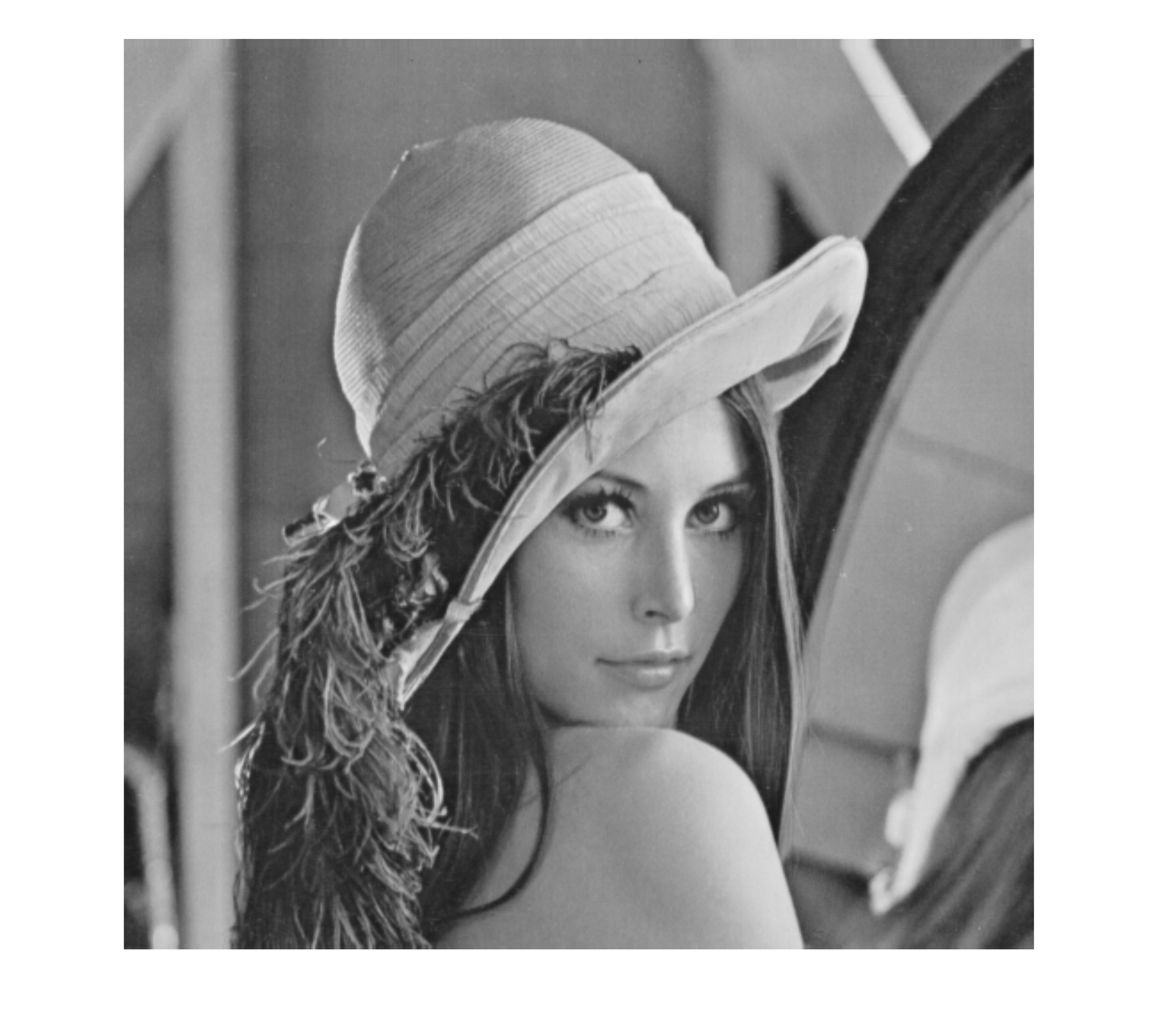}\label{ReconsL}}
	 \subfigure[]
	 {\includegraphics[height=5cm,width=5cm]{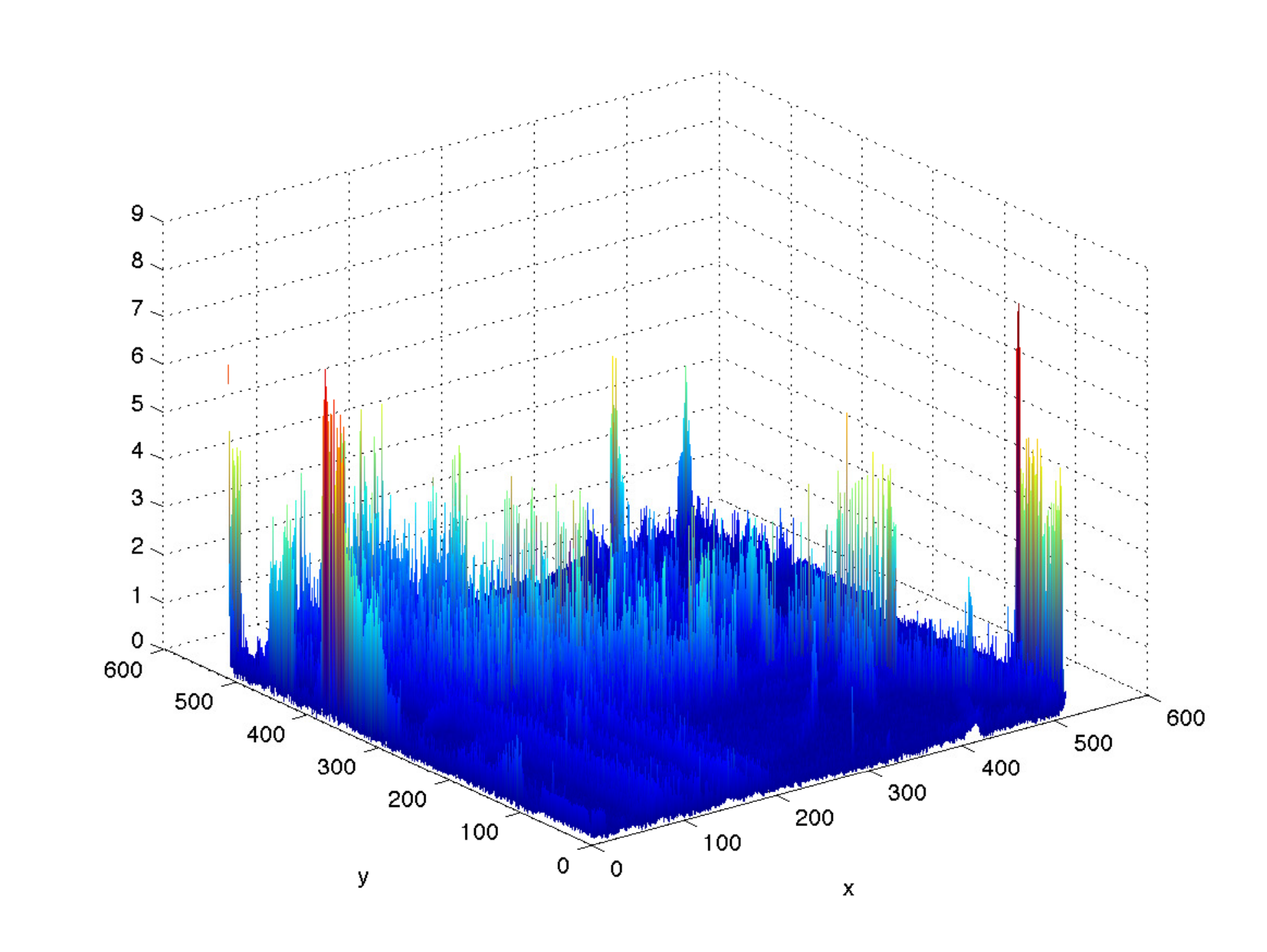}\label{ErrorL}}

	 \caption{Example 3: Lena's image. $(a)$ Original image. $(b)$ Reconstructed image. $(c)$ The relative error between the original and reconstructed images.}
	\end{figure}

Figure \ref{VPN2} shows the behavior of the number of negative eigenvalues for all rows and columns. It is clear that this number decreases when $h$ increases. 

\begin{figure}[h]
 \centering
  {\includegraphics[width=5cm]{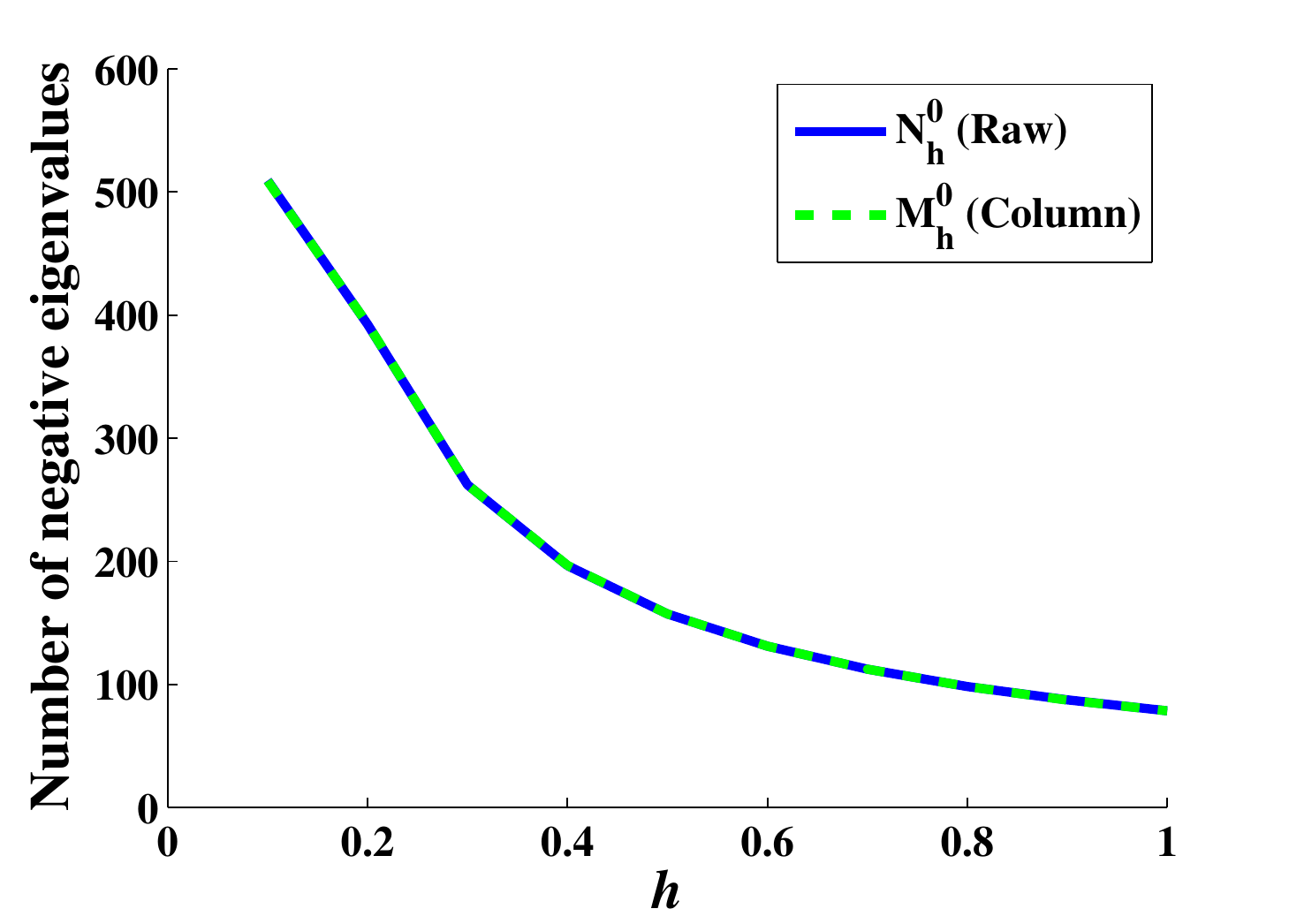}}
\caption{Number of negative eigenvalues.} \label{VPN2}
\end{figure}

The figures \ref{1egn} and \ref{1egnz} illustrate the localization property of the first $L^2$-normalized eigenfunctions which corresponds to the largest peak in the image.
The last $L^2$-normalized eigenfunction is given in figures \ref{512egn} and \ref{512egnz}. It contains several peaks, they represent the details in the image.

\begin{figure}[!h] 
 \centering
 \subfigure[]
 {\includegraphics[height=5cm,width=5cm]{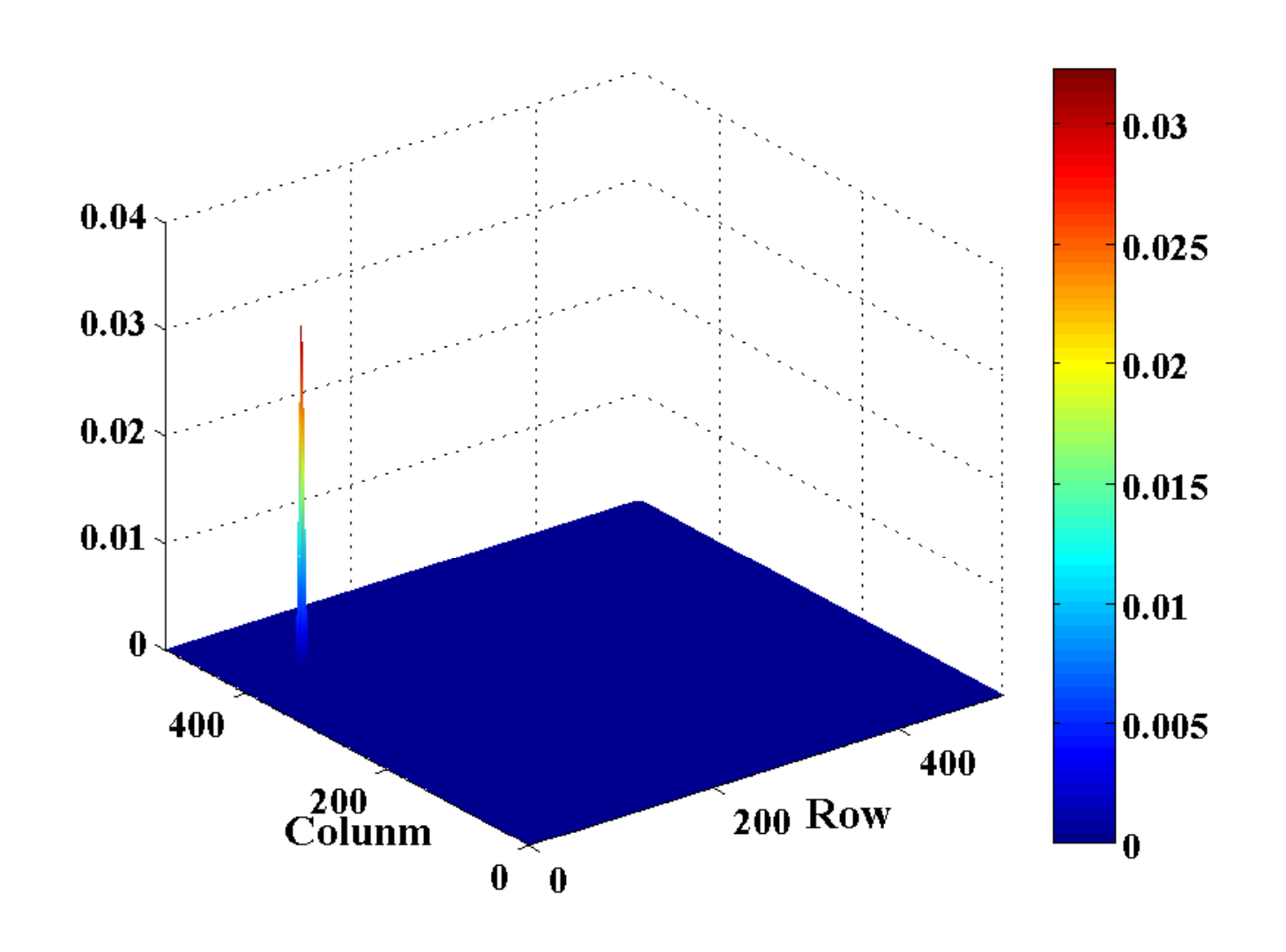}\label{1egn}}
 \subfigure[]
 {\includegraphics[height=5cm,width=5cm]{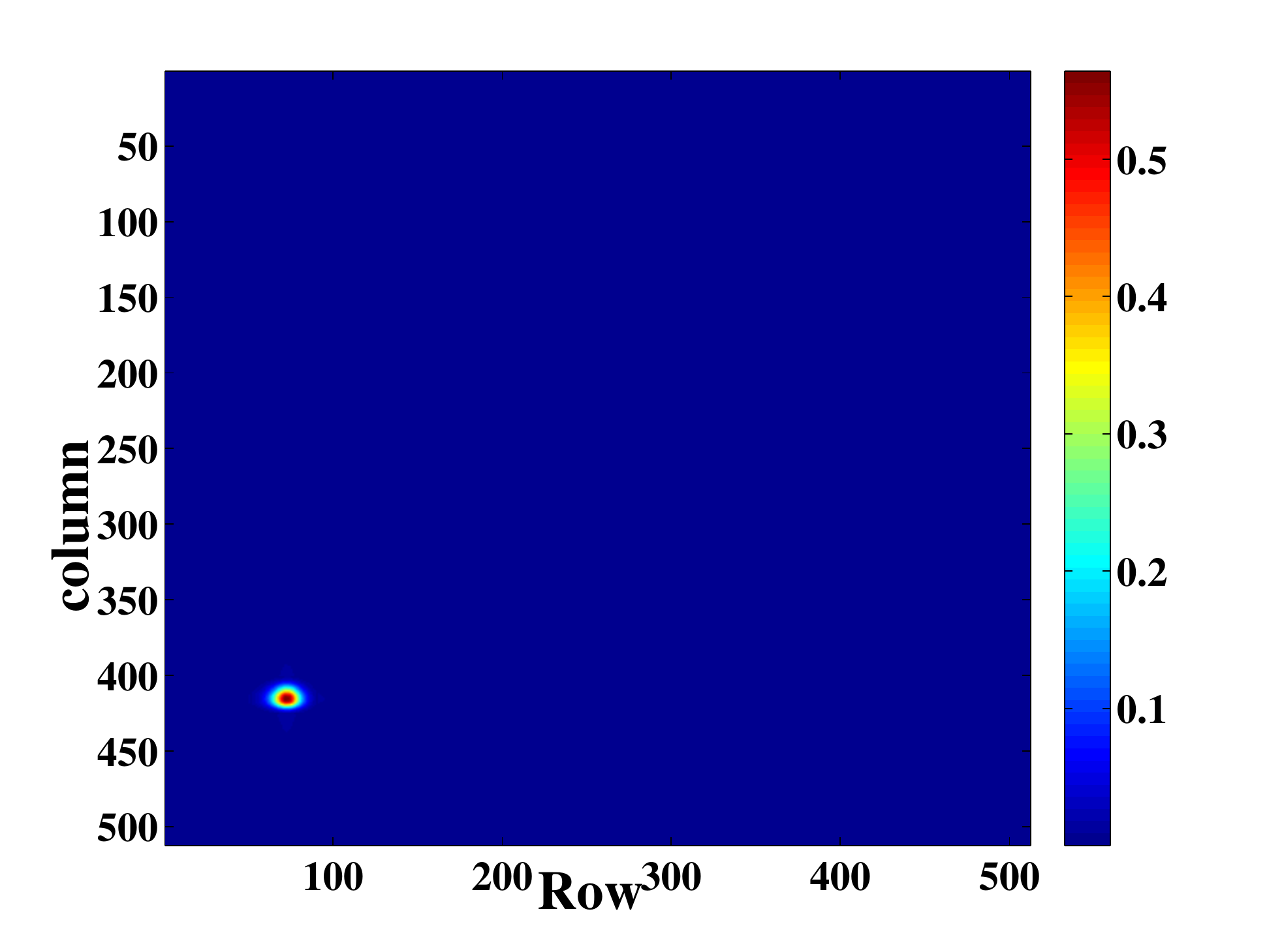}\label{1egnz}}
 \subfigure[]
 {\includegraphics[height=5cm,width=5cm]{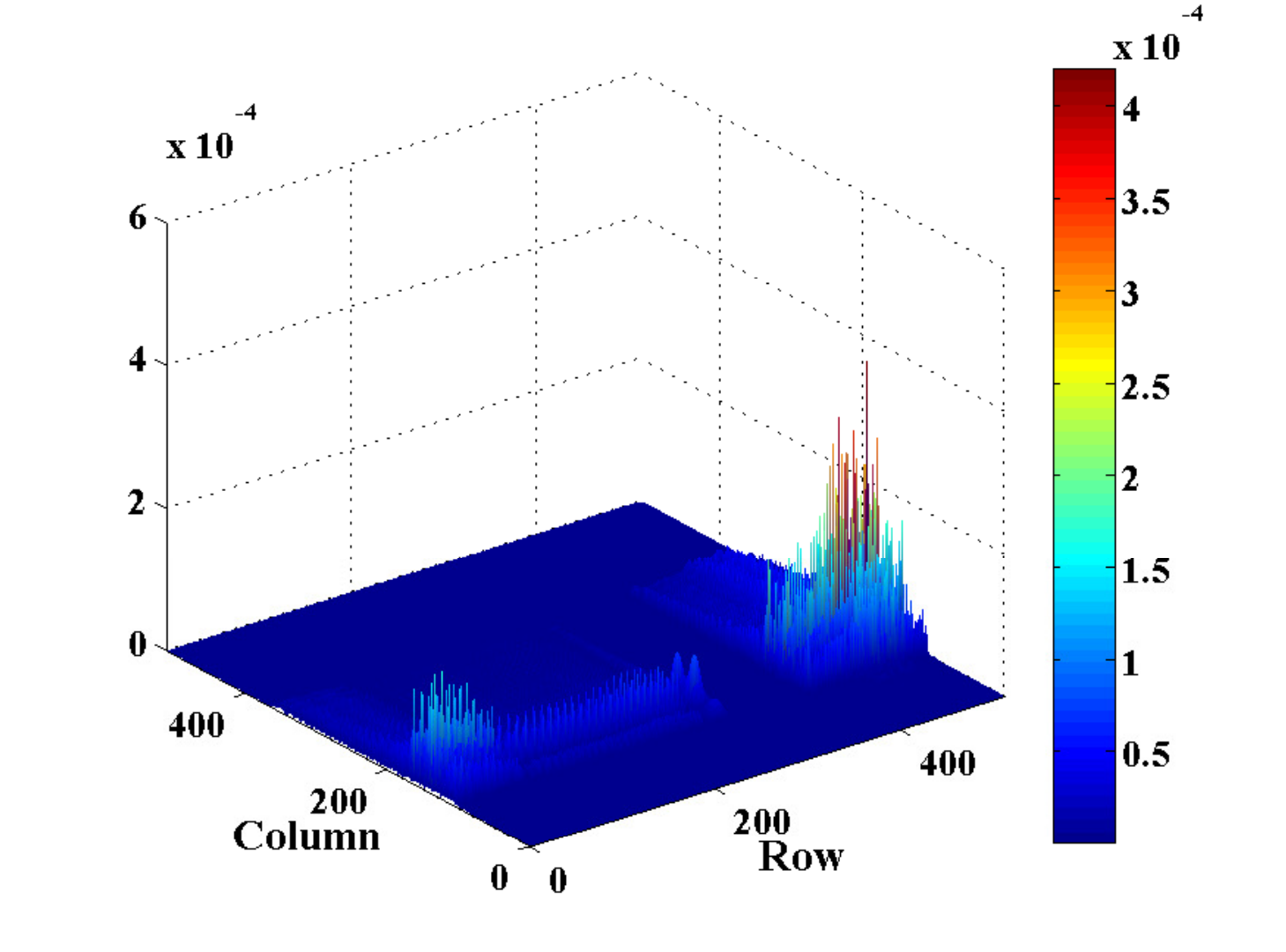}\label{512egn}}
 \subfigure[]
 {\includegraphics[height=5cm,width=5cm]{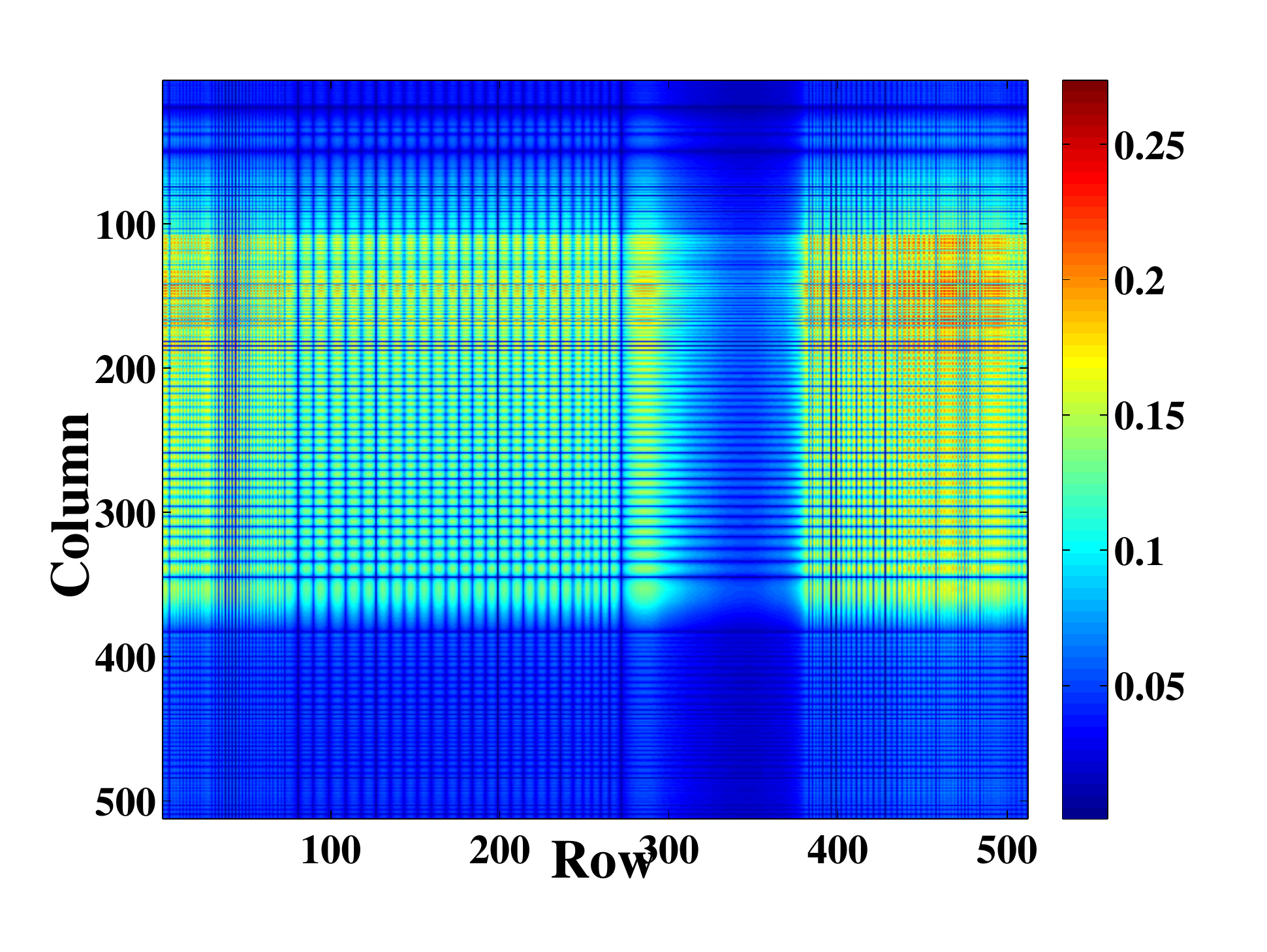}\label{512egnz}}

\caption{$(a) \text{ and }(b)$ First eigenfunction, $(c)\text{ and }(d)$ Last eigenfunction of Lena's image.}\label{loc_eig}
\end{figure}

\newpage

\section{Image denoising based on the SCSA approach}\label{sec6}
As described in the introduction, the novelty of this work is the way we select the set of adaptive functions for image representation and denoising. Comparing to the existing methods, the proposed method uses only negative eigenvalues and associated squared $L^2$-normalized eigenfunctions of the semi-classical Schr\"odinger operator, where the image is considered as a potential of this operator. This quantities which depend only on the image, represent the two main features which account for the performance improvement. It has been also shown in the previous section that good results are obtained with a small number of negative eigenvalues (i.e.; for $h$ large enough), this comes from the localization property of the eigenfunctions and the pertinent information that they contain. In fact the first eigenfunction gives a good localization of the largest
peak in the image, the second for the two peaks that follow the largest peak, then as the order of the eigenfunctions increases, the oscillations become more important (see figure $\ref{loc_eig}$) so they gives information of the smaller details in the image. This is a well-known results, indeed, in \cite{Pankov} we show
%
that the eigenfunctions corresponding to less significant eigenvalues are oscillating having asymptotically a sine behavior describing the details of the signal. If the signal is noisy, these eigenfunctions will describe the noise components. So removing these components helps to reduce the noise. However, because of the nonlinearity of the method, instead of a naive truncation, which may lead to loose information about the signal (since all the eigenfunctions contain information about the signal), an alternative consists in increasing the semi-classical parameter value leading to reduce the number of eigenfunctions and hence reduce the effect of the noise. 

We are going to show the efficiency and the stability of this method through some numerical results. The experiments have been carried out on 2D images which are standard testing images of most state-of-the-art denoising algorithms. The images are contaminated by additive Gaussian white noise with zero mean and different levels of standard deviation $\sigma$ (i.e.; different values of signal-to-noise ratio (SNR)), the noise is added using the command Matlab $imnoise$.

As a first step and by using only the visual performance, we will show through geometric image  that in the denoising process, the SCSA method preserves the edges even at high level of noise as illustrated in figures $\ref{dam7_5}$, $\ref{dam30}$ and $\ref{dam50}$.

\begin{figure}[!h]
 \centering
 \subfigure[]
 {\includegraphics[height=4cm,width=4cm]{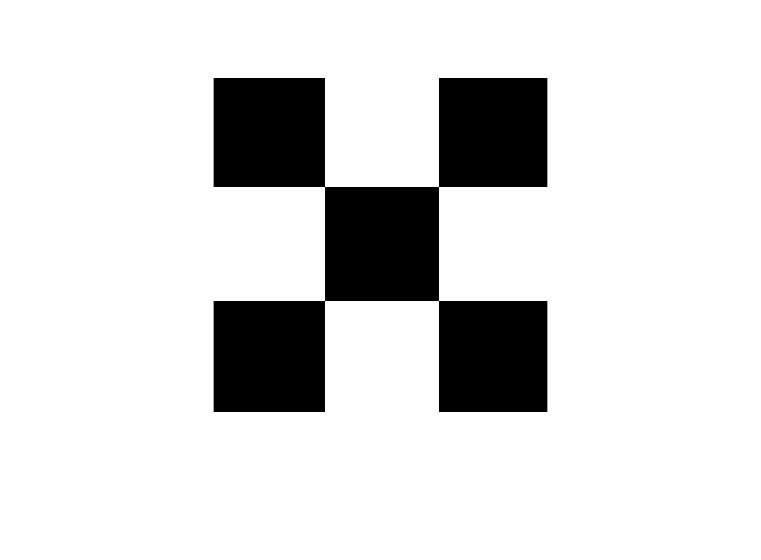}}
 \subfigure[]
 {\includegraphics[height=4cm,width=4cm]{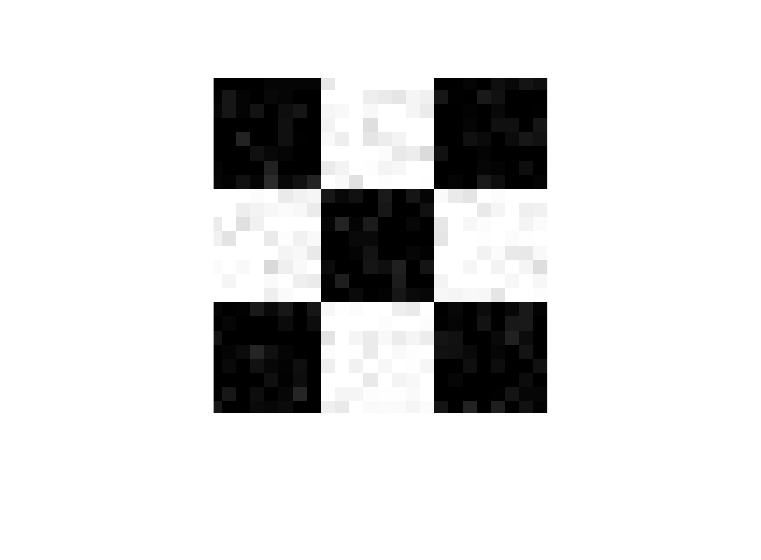}}
 \subfigure[]
 {\includegraphics[height=4cm,width=4cm]{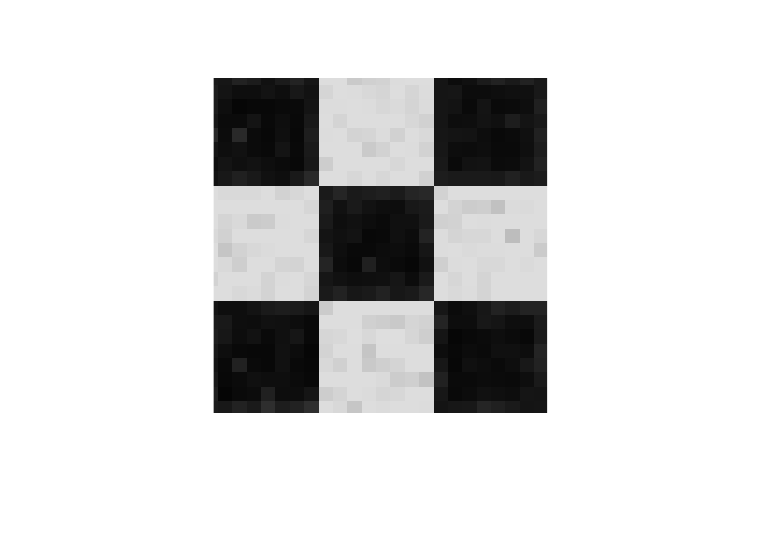}}
\caption{(a) Original image, (b) Noisy image ($\sigma=7.5$, SNR = $24.65$ dB), (c) Denoised image.}\label{dam7_5}
\end{figure}
\begin{figure}[!h]
 \centering
 \subfigure[]
 {\includegraphics[height=4cm,width=4cm]{DamierOriginal.eps}}
 \subfigure[]
 {\includegraphics[height=4cm,width=4cm]{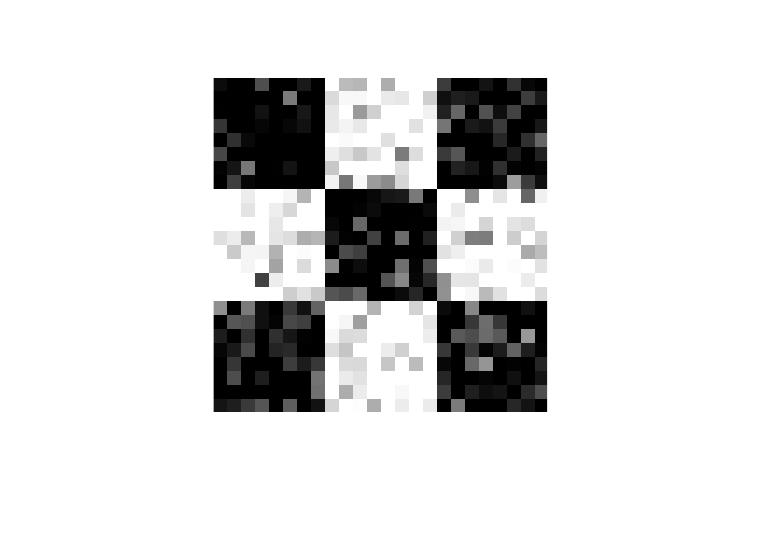}}
 \subfigure[]
 {\includegraphics[height=4cm,width=4cm]{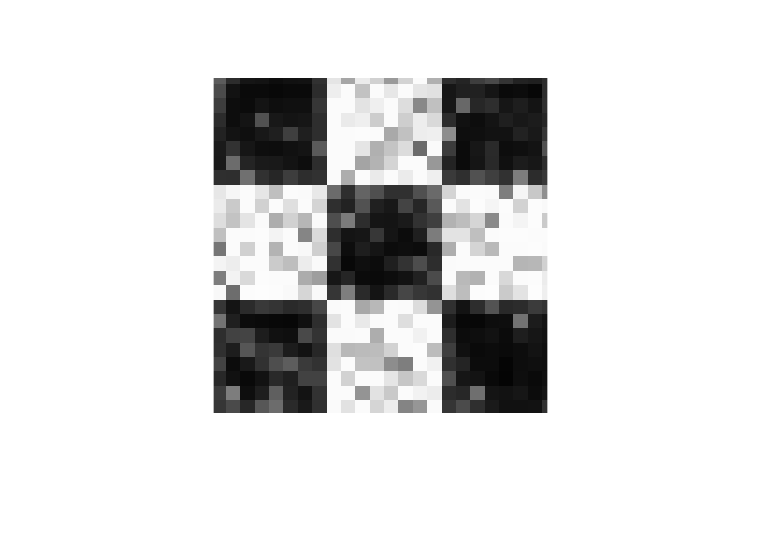}}
\caption{(a) Original image, (b) Noisy image ($\sigma=30$, SNR = $12.58$ dB), (c) Denoised image.}\label{dam30} 
\end{figure}
\begin{figure}[!h]
 \centering
 \subfigure[]
 {\includegraphics[height=4cm,width=4cm]{DamierOriginal.eps}}
 \subfigure[]
 {\includegraphics[height=4cm,width=4cm]{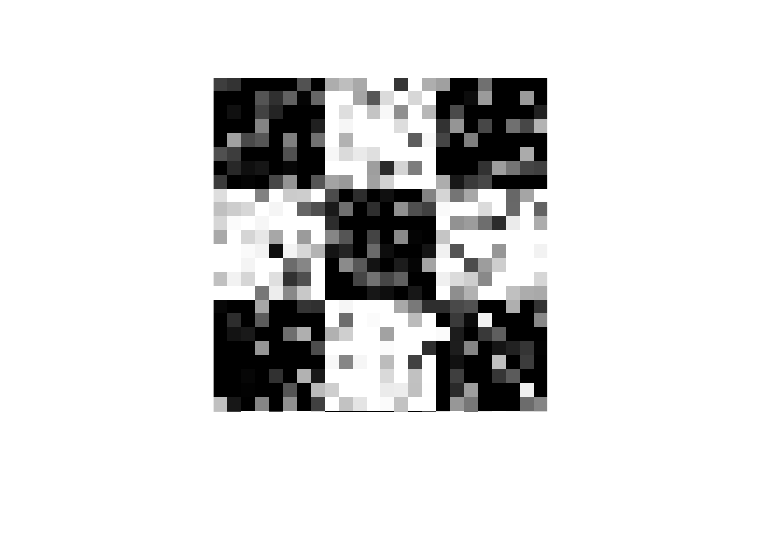}}
 \subfigure[]
 {\includegraphics[height=4cm,width=4cm]{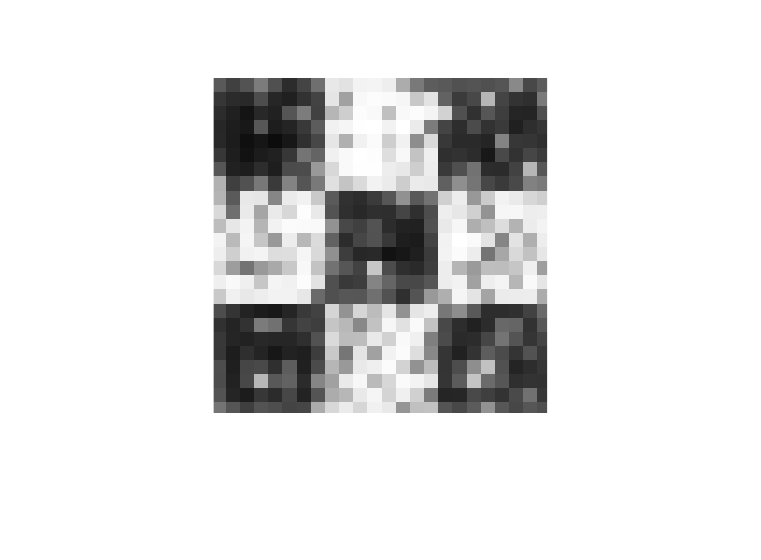}}
\caption{(a) Original image, (b) Noisy image ($\sigma=50$, SNR = $8.27$ dB), (c) Denoised image.}\label{dam50}
\end{figure}

In the following, some results obtained are presented. The objective performance measures used are defined as follows: let $I$ the noise-free image of size $N \times N$ and $\widetilde{I}$ be the denoised image.

\begin{itemize}[$\bullet$]
\item Mean Square Error
\begin{equation}
\text{MSE} = \frac{1}{N^2}\sum_{i=1}^{N}\sum_{j=1}^{N} \left(I[i,j]-\widetilde{I}[i,j]\right).
\end{equation}

\item Peak-Signal-to-Noise Ratio (PSNR)
\begin{equation}
\text{PSNR} = 10\,log_{10} \left(\frac{L^2}{MSE} \right),
\end{equation}
where $L$ is the dynamic values ​​of the pixels, which is $1$ or $255$ and MSE is the mean square error.

\item Mean Structural SIMilarity index (MSSIM)
\begin{equation}
\text{MSSIM} = \frac{1}{N} \sum_{i=1}^{N} \text{SSIM}(i,i),
\end{equation} 
\begin{equation}
\text{SSIM}(i,j) = \frac{(2\mu_i \mu_j+\epsilon_1)(2\sigma_{ij}+\epsilon_2)}{(\mu_i^2+\mu_j^2+\epsilon_1)(\sigma_i^2+\sigma_j^2+\epsilon_2)},
\end{equation}
where $\mu_i$ and $\mu_j$ are the standard deviation of $x$ and $y$, $\sigma_{ij}$ is the covariance between $i$ and $j$, $\epsilon_1$ and $\epsilon_2$ ensure the stability when either $(\mu_i^2+\mu_j^2)$ or $(\sigma_i^2+\sigma_j^2)$ is close to zero. The SSIM is defined over a local window centered at $[i,j]$ and an average over such windows gives a single measure for the entire image, named as Mean SSIM (MSSIM) \cite{Wang2004}.

\end{itemize}

Figures $\ref{L}$ and $\ref{L10noise}$ show the original image of Lena and the noisy one. The standard deviation $\sigma$ is equal to $75$ and the corersponding SNR is equal to $11.24$ dB. 
\begin{figure}[!h]
 \centering
 \subfigure[]
 {\includegraphics[height=4.25cm,width=4.25cm]{OriginalLena.eps}\label{L}}
 \subfigure[]
 {\includegraphics[height=4.25cm,width=4.25cm]{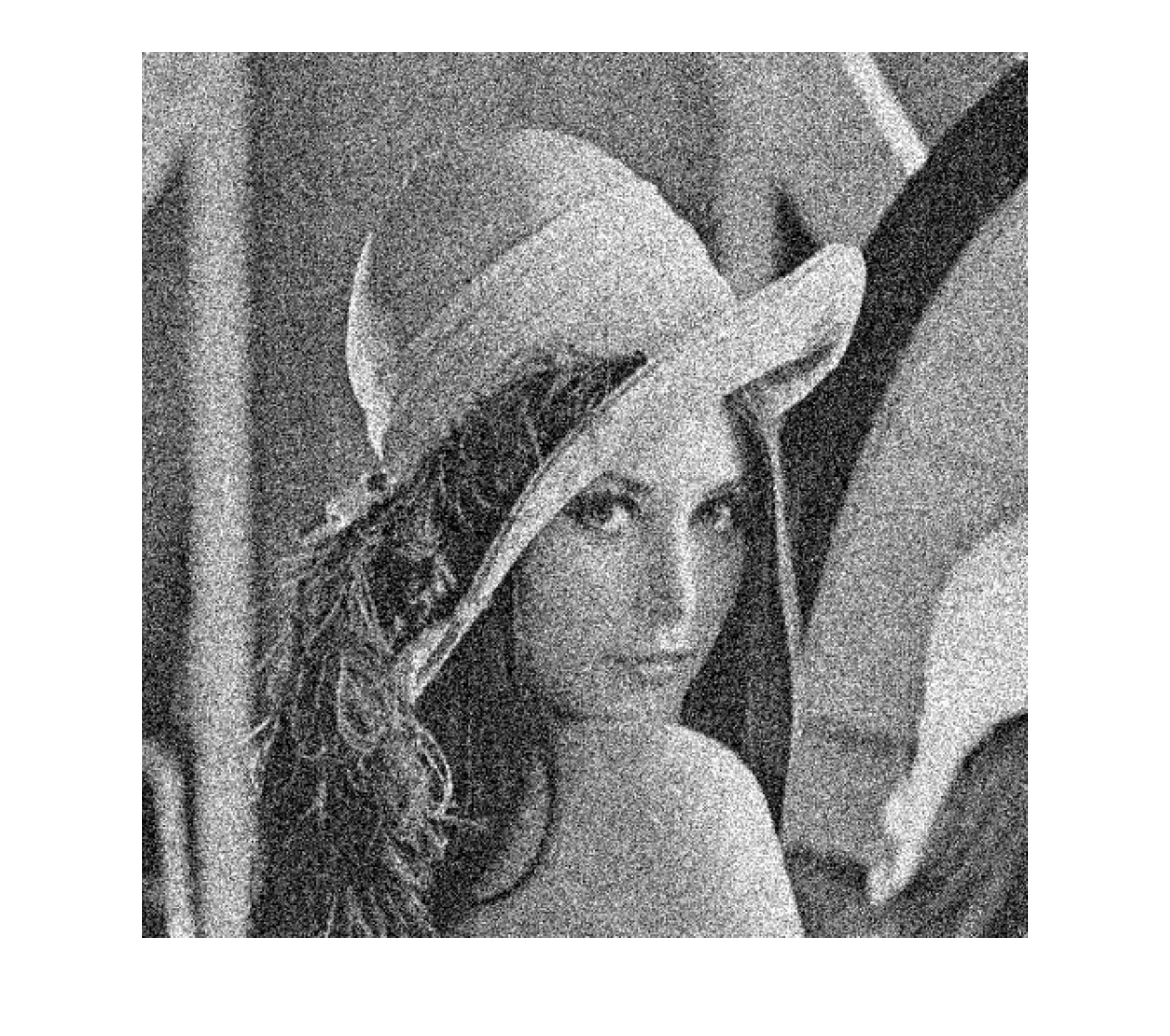}\label{L10noise}}
\caption{(a) Original image of Lena, (b) Noisy image for $\sigma=75$ (SNR = $11.24$ dB).} 
\end{figure}

Figures $\ref{Leqm75}$, $\ref{Lpsnr75}$ and $\ref{Lmssim75}$ show that the optimal value of $h$ is equal to $1.65$. Figure $\ref{L75D165}$ illustrates the denoising of the Lena's image using the optimal value of $h$. However, the use of an $h$ smaller than the optimal value, does not filter completely the noise, but helps to reconstruct the noisy image (see figure $\ref{L75D12}$).

\begin{figure}[!h]
 \centering
 \subfigure[]
 {\includegraphics[height=3.75cm,width=3.75cm]{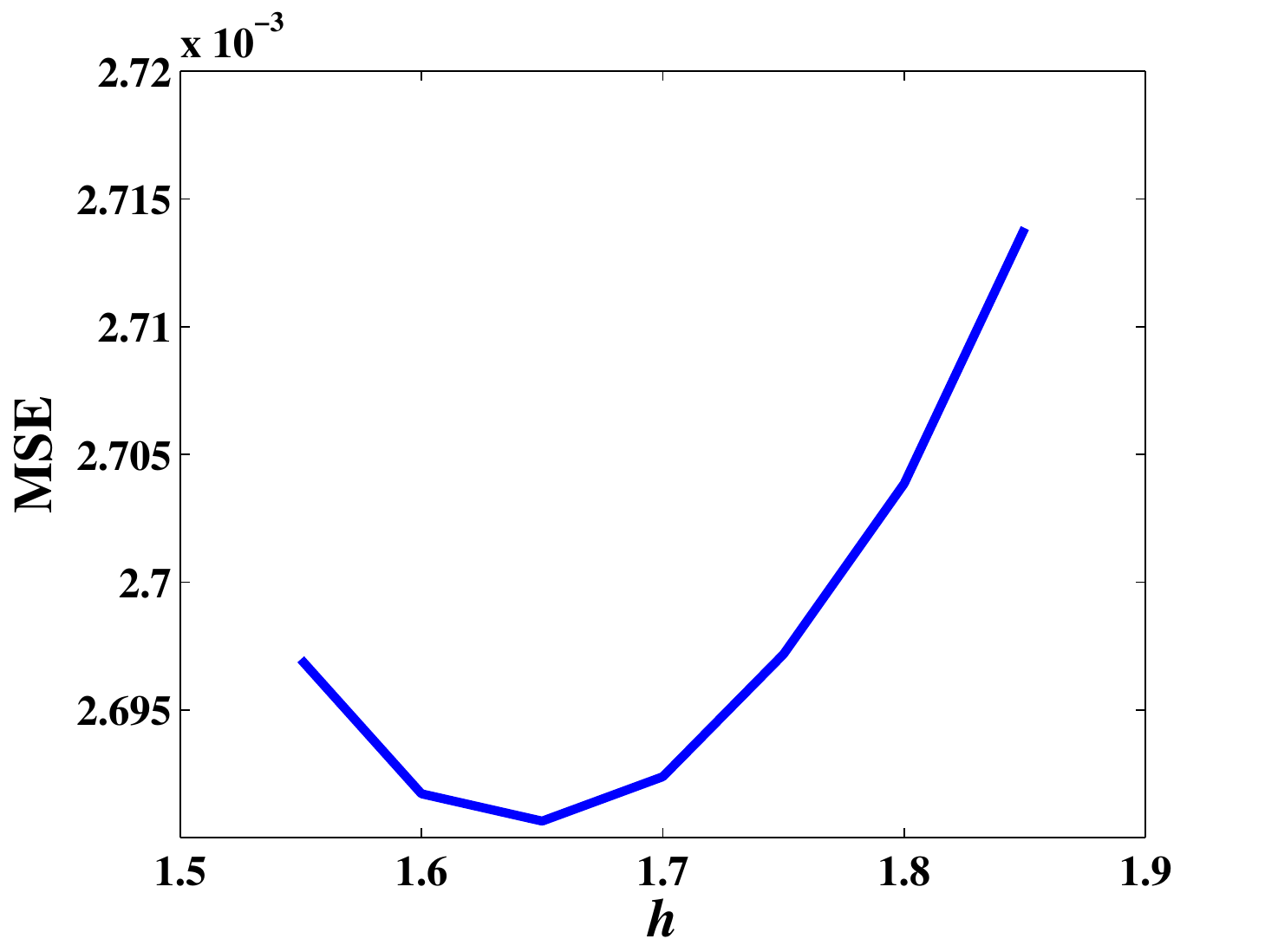}\label{Leqm75}}
 \subfigure[]
 {\includegraphics[height=3.75cm,width=3.75cm]{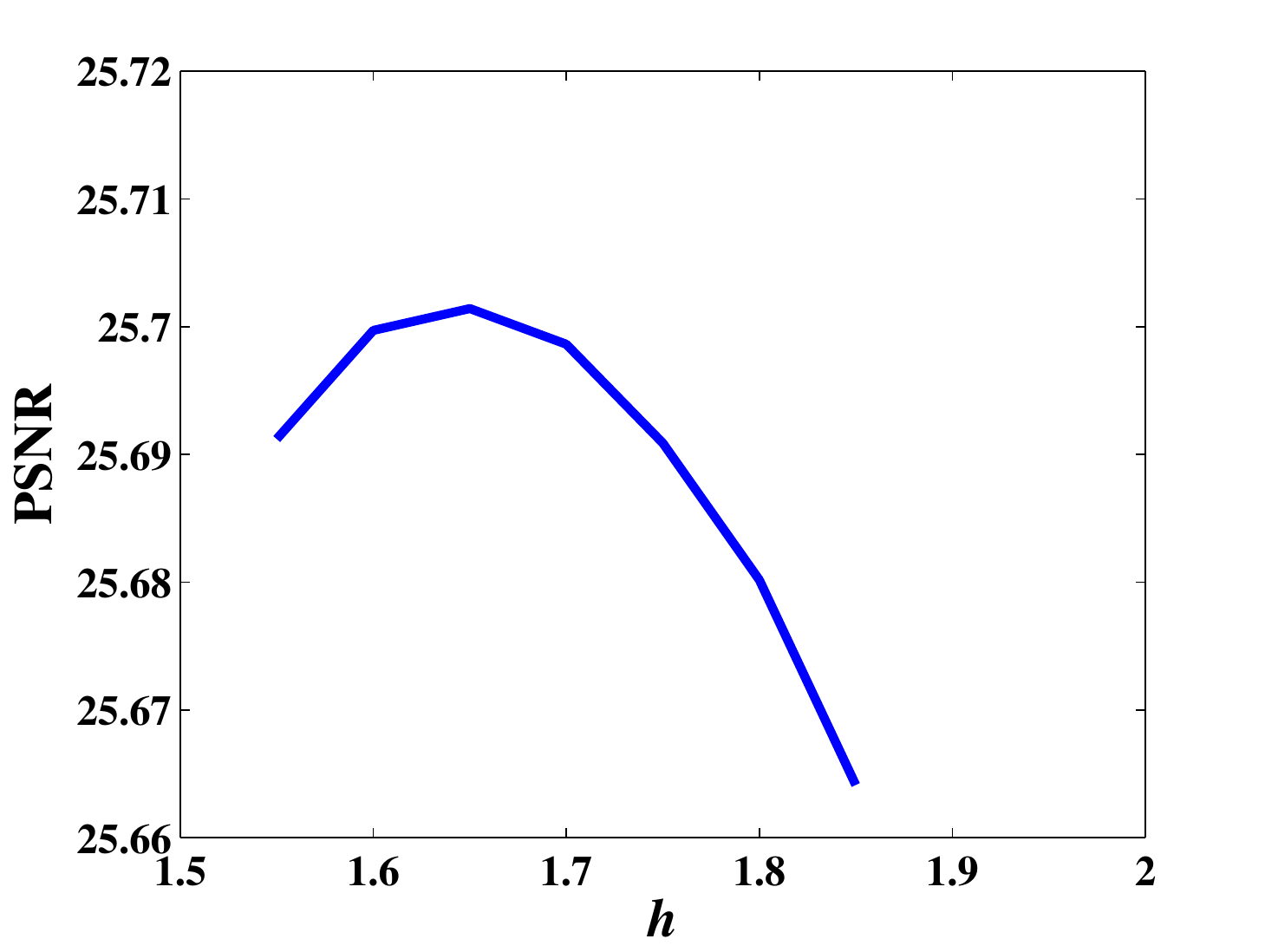}\label{Lpsnr75}}
  \subfigure[]
 {\includegraphics[height=3.75cm,width=3.75cm]{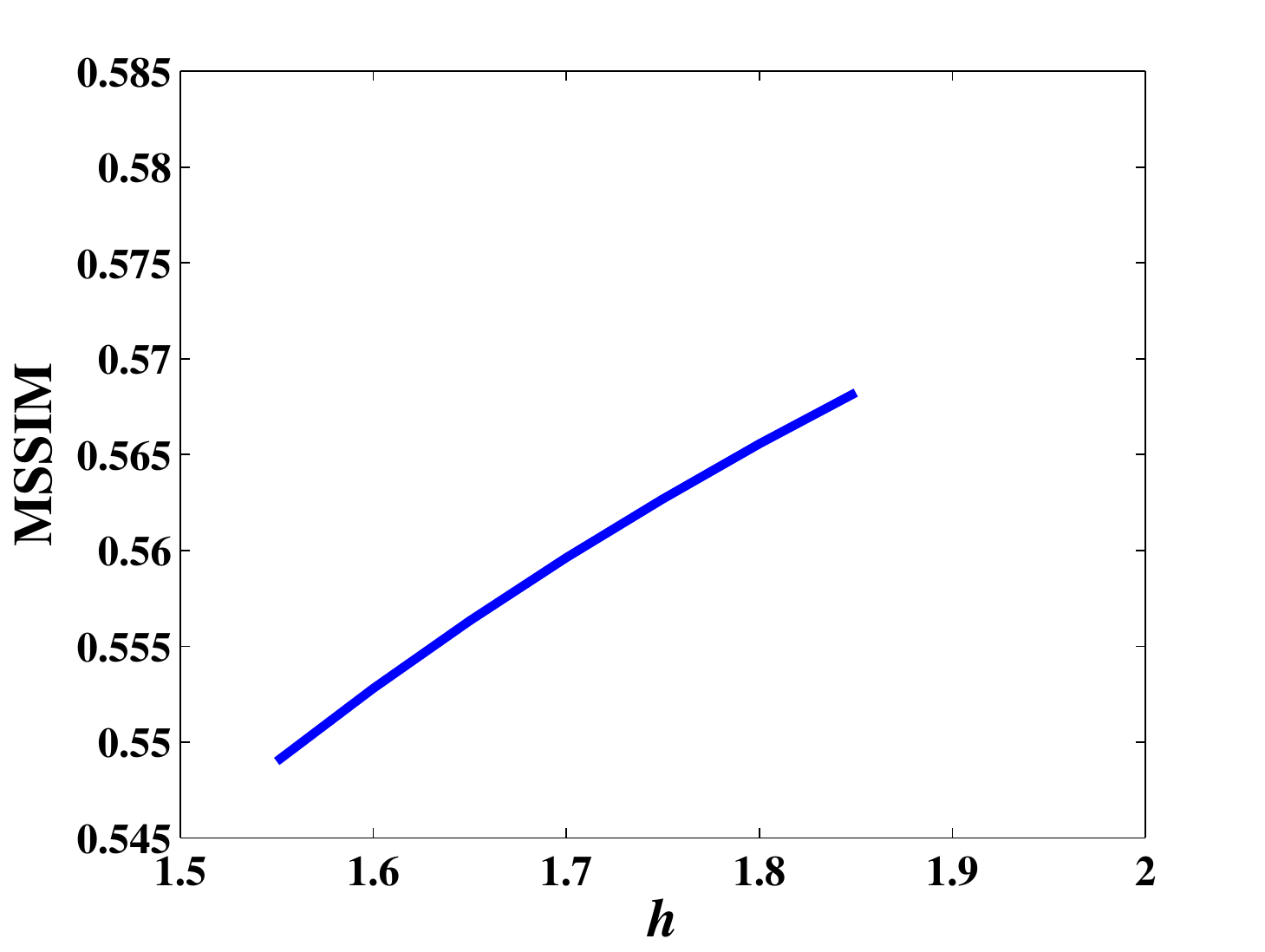}\label{Lmssim75}}
\caption{Mean square error (Fig. (a)), PSNR (Fig. (b)) and MSSIM (Fig. (c)) for different value of $h$.}
\end{figure}

\begin{figure}[!h]
 \centering
 \subfigure[]
 {\includegraphics[height=3.75cm,width=3.75cm]{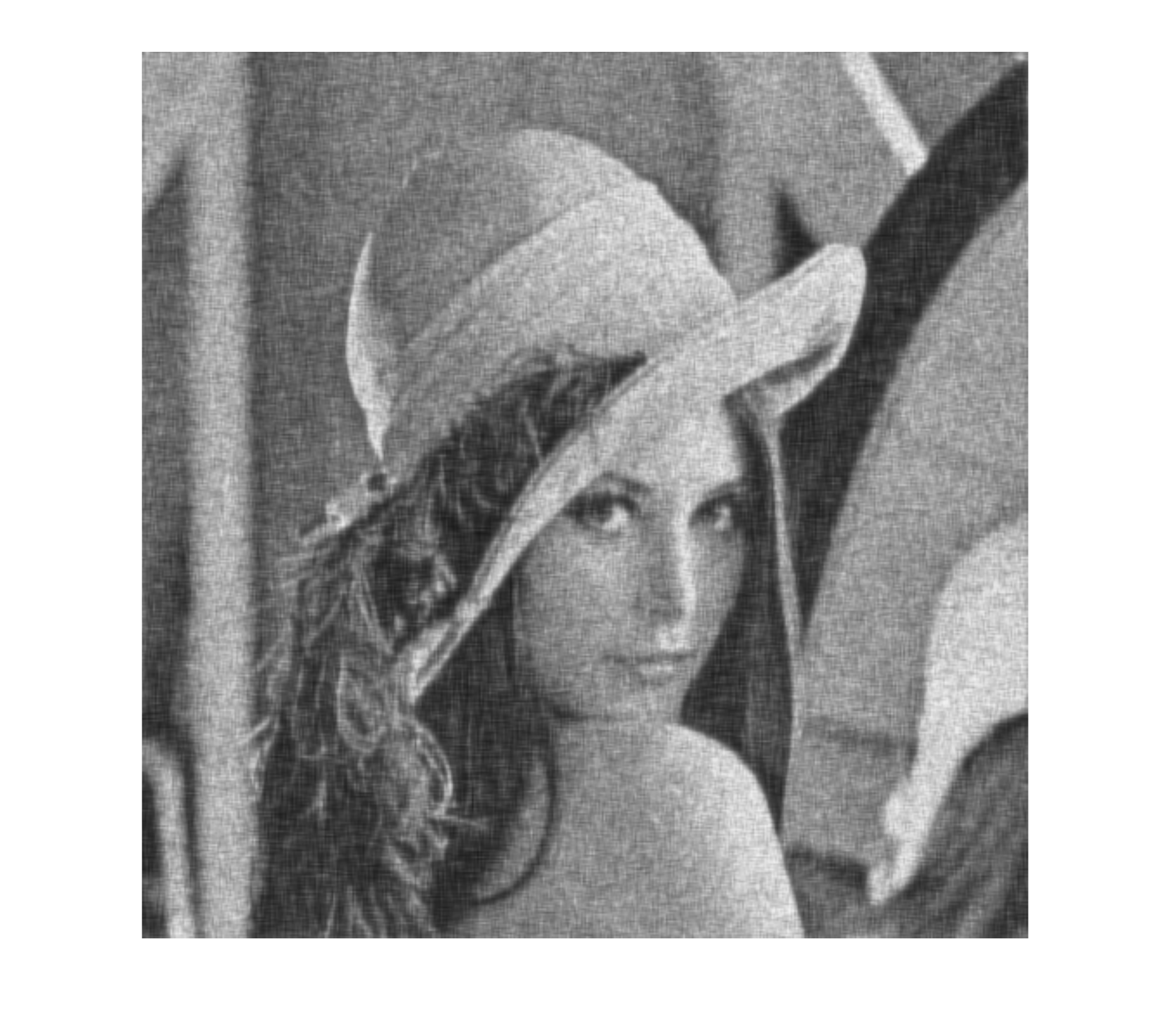}\label{L75D12}}
  \subfigure[]
 {\includegraphics[height=3.75cm,width=3.75cm]{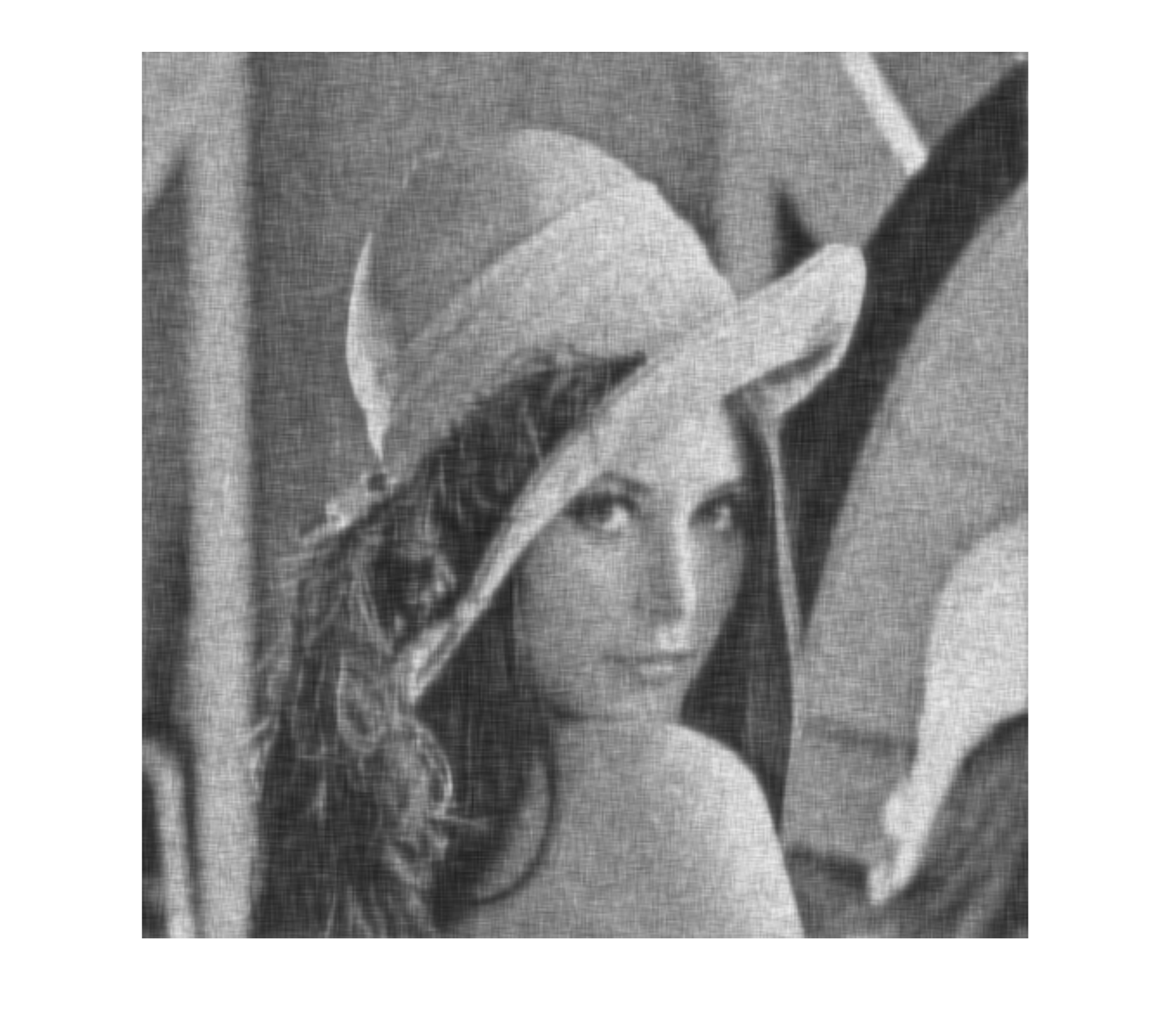}\label{L75D165}}
 \subfigure[]
 {\includegraphics[height=3.75cm,width=3.75cm]{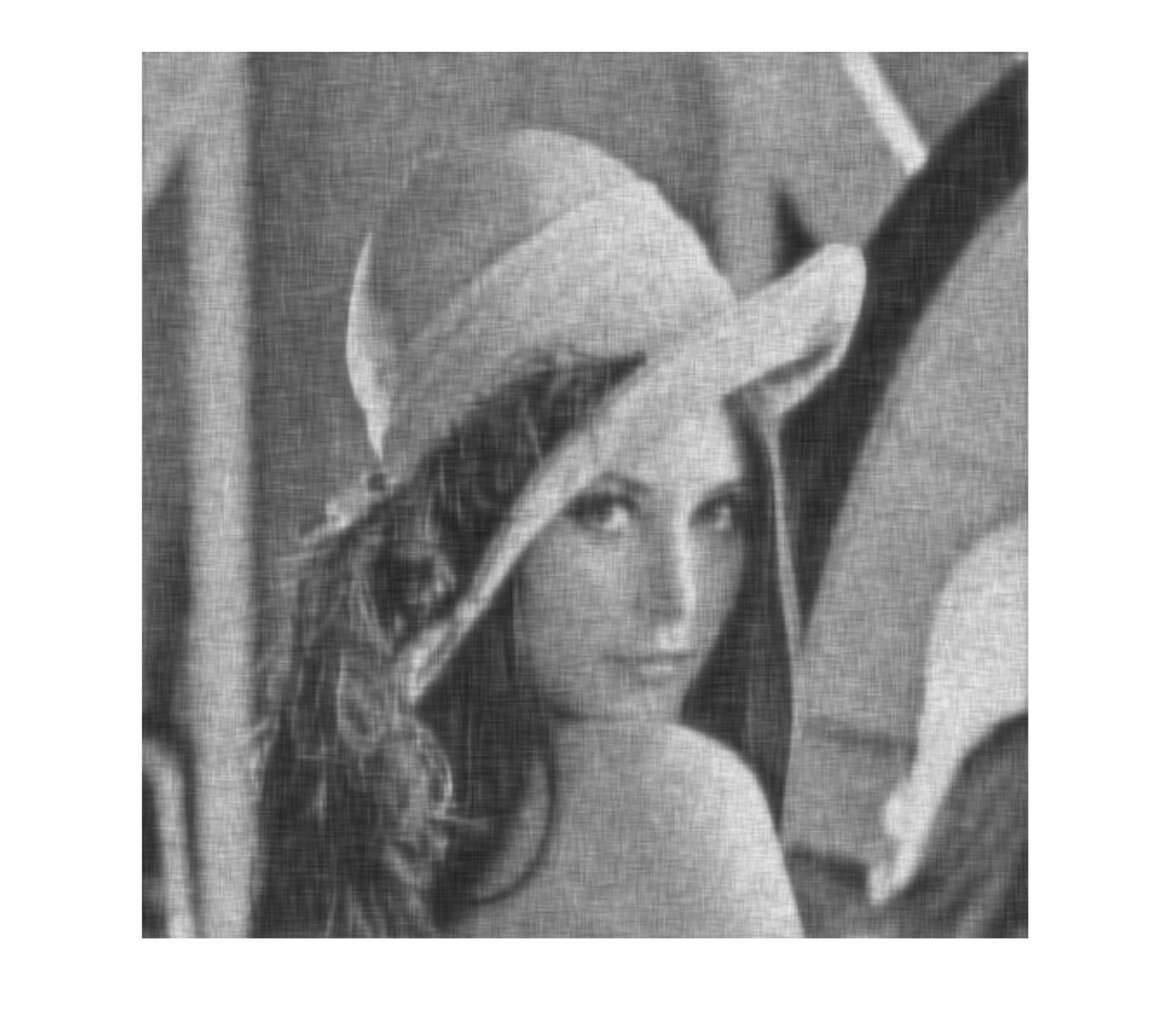}\label{L75D2}}
\caption{(a) Denoised image with $h = 1.2$ (MSE = $0.0029$, PSNR = $25.4050$ dB, MSSIM = $0.5130$), (b) Denoised image with $h = 1.65$ (MSE = $0.0027$, PSNR = $25.7015$ dB, MSSIM = $0.5563$), (c) Denoised image with $h = 2$ (MSE = $0.0028$, PSNR = $25.6006$ dB, MSSIM = $0.5754$).}
\end{figure}

Figure $\ref{Zoomedge}$ shows a zoom of the denoising result of Lena image obtained by the proposed method for $\sigma=75$ and $h=1.65$. We can see that the proposed method provides better visual quality and the edges and textures of the image are better preserved.

\begin{figure}[!h]
 \centering
 \subfigure[]
 {\includegraphics[height=4cm,width=4cm]{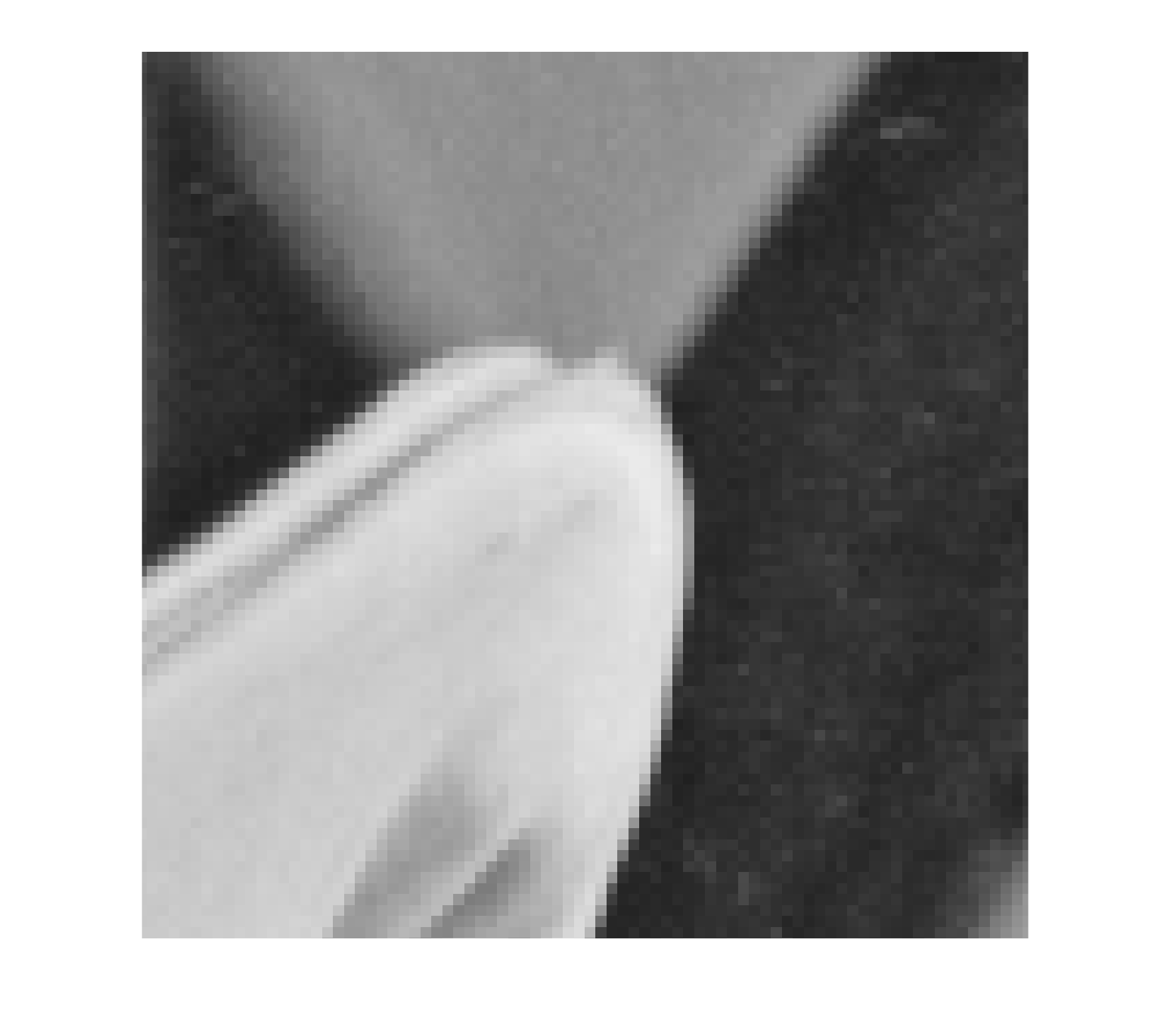}}
 \subfigure[]
 {\includegraphics[height=4cm,width=4cm]{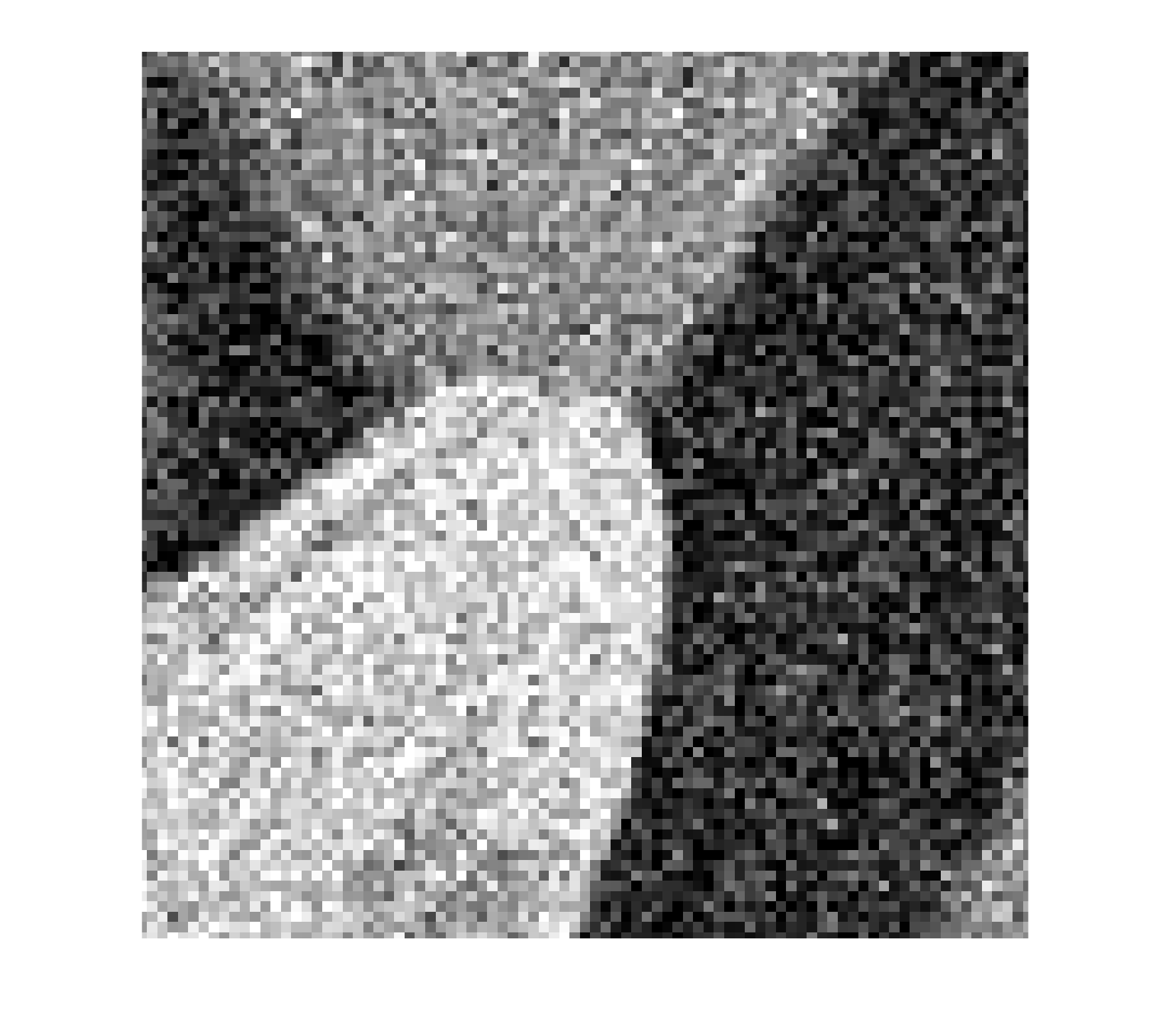}}
 \subfigure[]
 {\includegraphics[height=4cm,width=4cm]{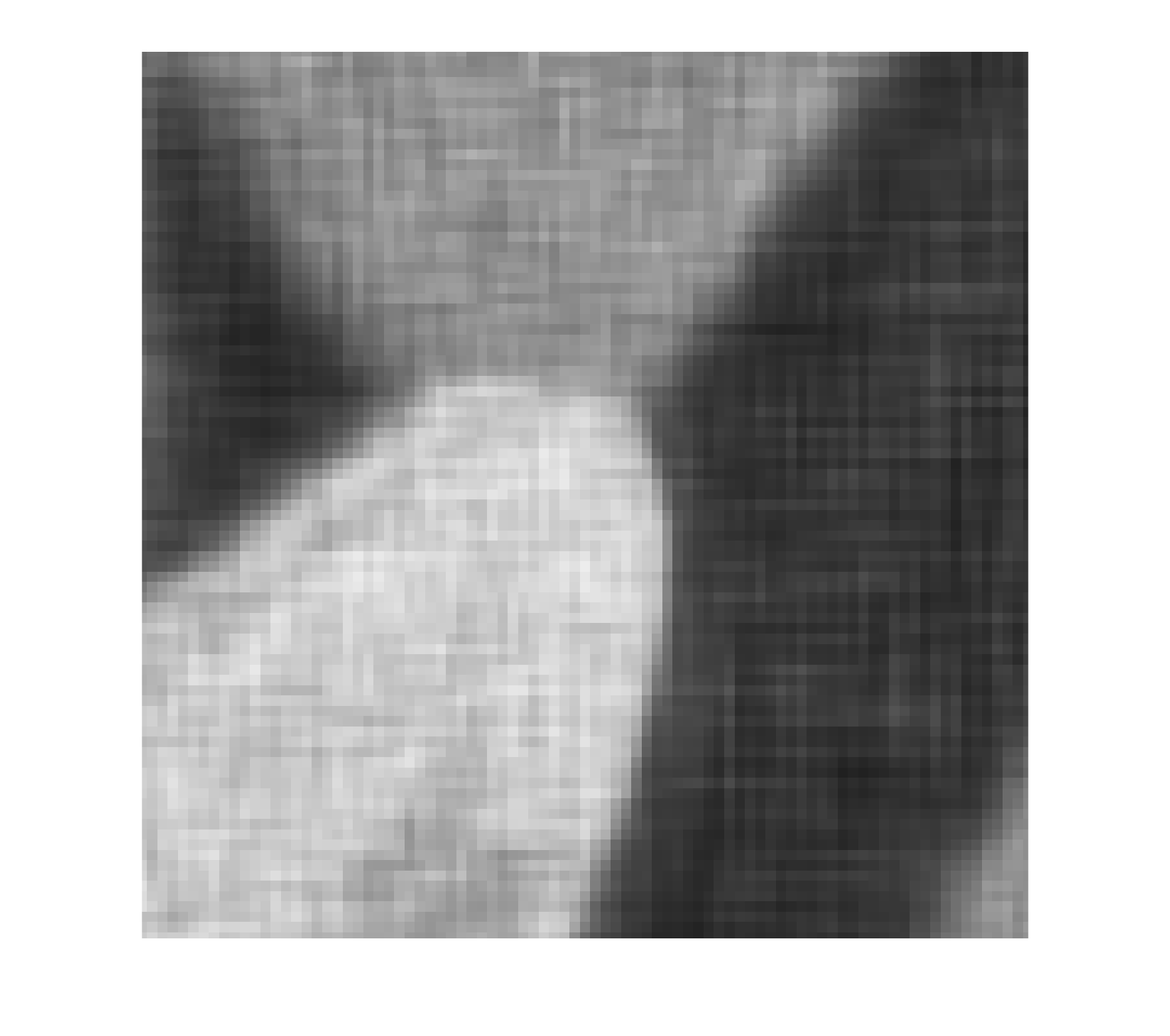}}
\caption{Zoom on denoising of Lena corrupted by noise with $\sigma = 75$, (a) Original image, (b) Noisy image, (c) Denoised image.}
\label{Zoomedge}
\end{figure}

The histogram of the original image of Lena, the noisy $(\sigma=75)$ and the denoised one are illustrated, respectively, in figure $\ref{Histo}$. Figure $\ref{Hnoise}$, which represents the histogram of the noisy image, has the shape of the Gaussian function. Using the SCSA method in the denoising process (figure \ref{Hdenoised}), the shape of the original image (figure \ref{Horiginal}) is obtained even at high level of noise. 

\begin{figure}[!h]
 \centering
 \subfigure[]
 {\includegraphics[height=4cm,width=4cm]{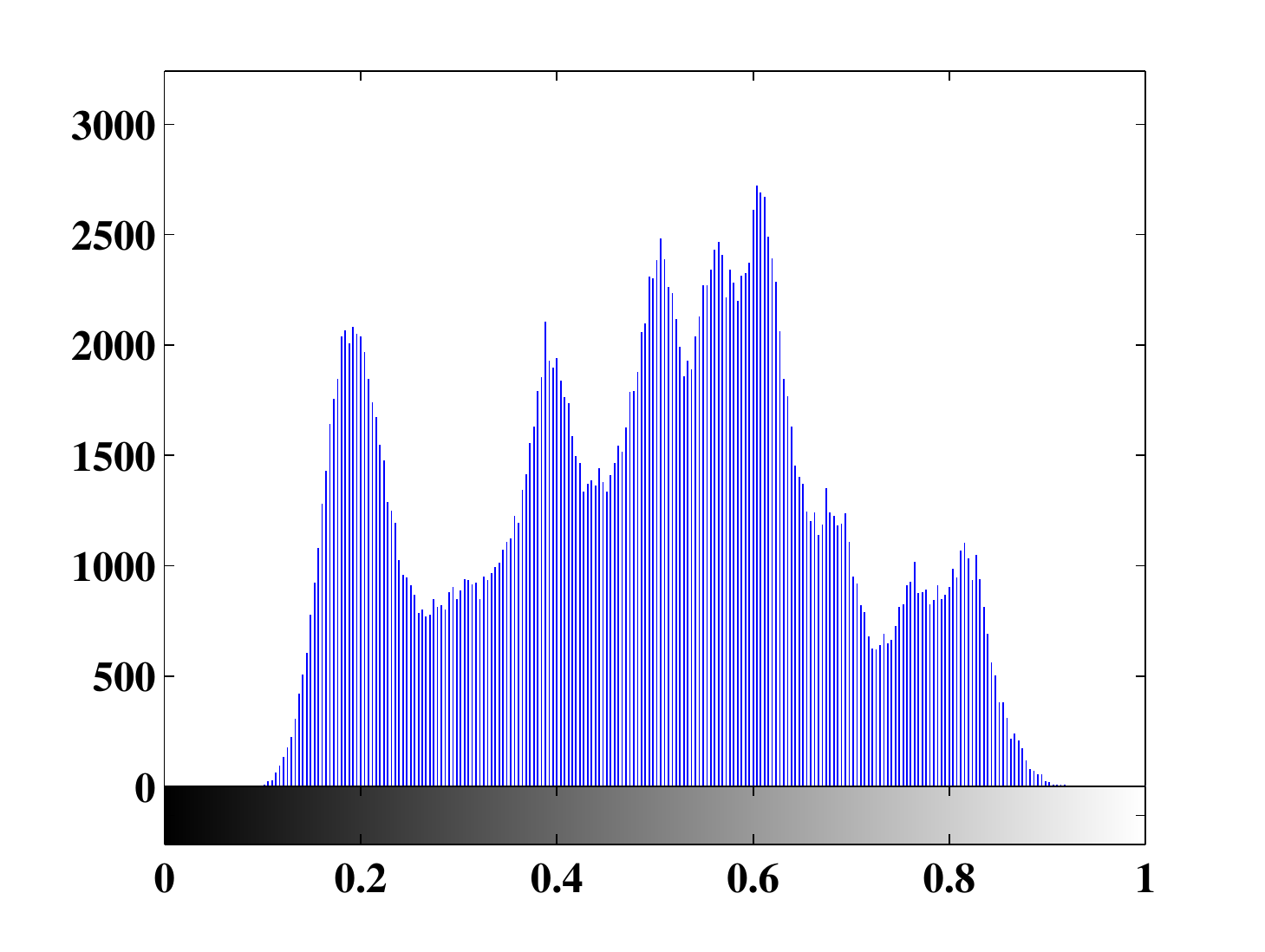}\label{Horiginal}}
 \subfigure[]
 {\includegraphics[height=4cm,width=4cm]{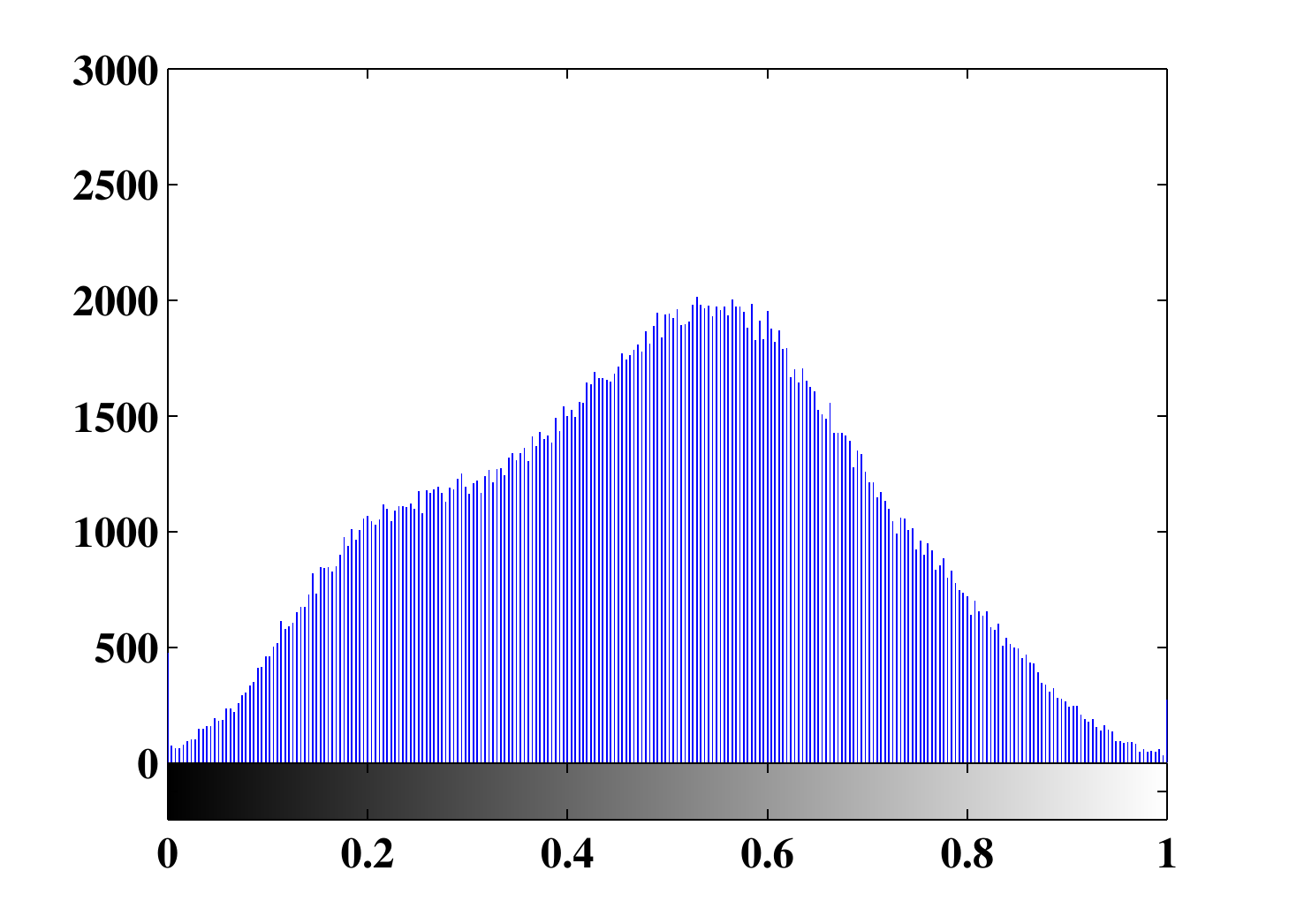}\label{Hnoise}}
 \subfigure[]
 {\includegraphics[height=4cm,width=4cm]{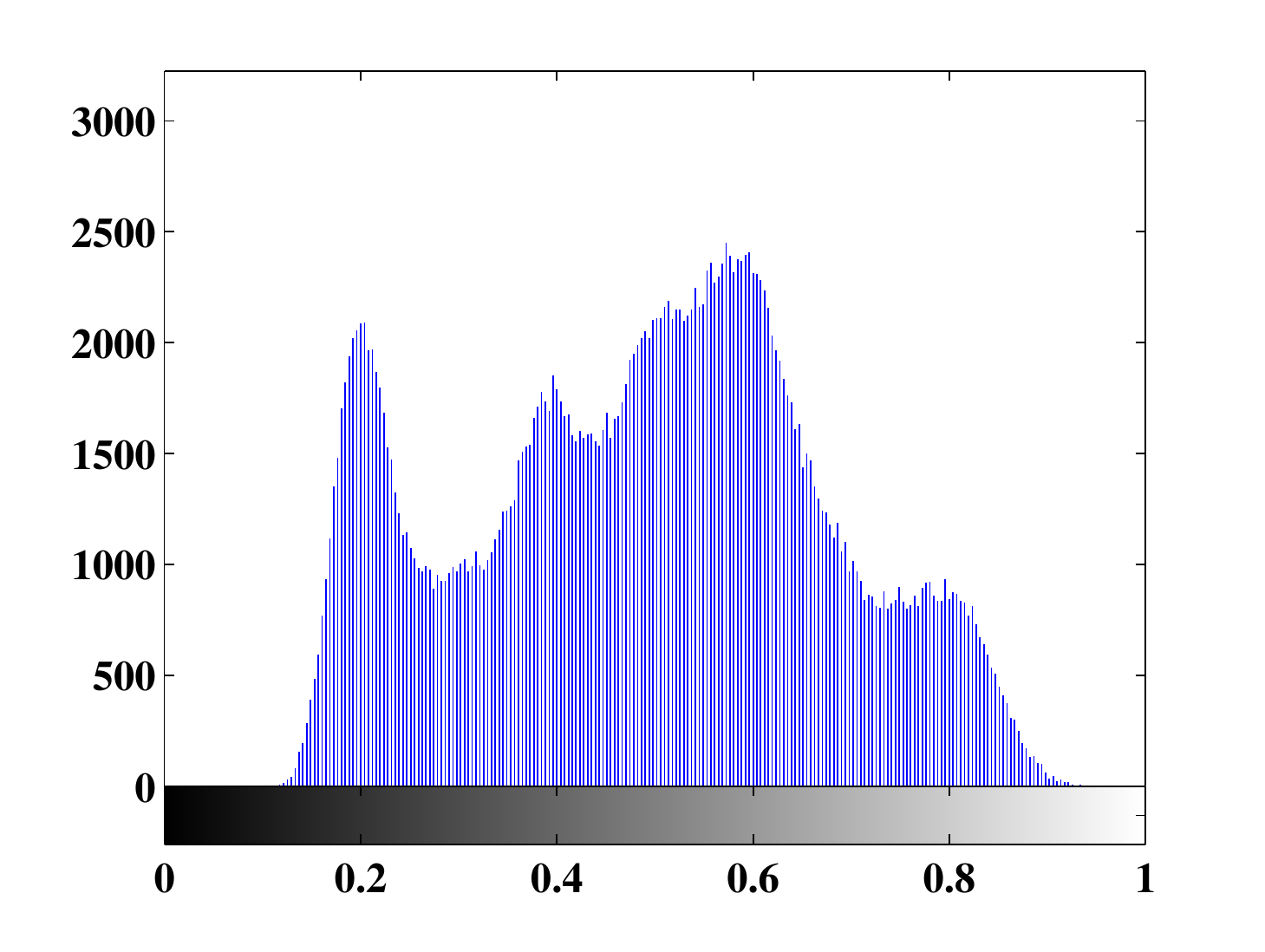}\label{Hdenoised}}
\caption{The histogram of, (a) Original image of Lena, (b) Noisy image $(\sigma = 75)$, (c) Denoised image.}
\label{Histo}
\end{figure}

The quantitative first results of the proposed algorithm are compared to the state-of-the-art models which are Total Variation $\cite{Elad2006, Wang2013, Zhang2012}$ and K-SVD \cite{Aharon2006,He2011,Yan2014}. All the parameters of these methods are set as what have been suggested to be the optimal one in the original paper. For our approach, the optimal values of $\lambda$ and $\gamma$ are 0 and 4, then for this values, the optimal value of $h$ has been chosen such that we use only the most significant eigenfunctions and associated negative eigenvalues. Table \ref{T1} shows the obtened results for Lena's image.

\begin{table}[!h]
\centering
\begin{tabular}{l c c c c c c c c c c c c c }
\hline \vspace{1pt}
   Noise variance & & \multicolumn{3}{c}{Proposed method} & & \multicolumn{2}{c}{ TV } & & \multicolumn{2}{c}{K-SVD} \\ 
\cline{3-5}  
\cline{7-8} 
\cline{10-11}
 \vspace{1pt}
 $\quad \quad \sigma$ & & $h$ & PSNR & SSIM & & PSNR & SSIM & & PSNR & SSIM\\ 
\hline 

 $\quad \quad  20$   & & 0.550 & 32.1 &  0.995 & & 31.3 & 0.843 & & 29.7 & 0.857 \\
 $\quad \quad  30$   & & 0.800 & 30.1 &  0.991 & & 29.6 & 0.809 & & 27.8 & 0.805 \\
 $\quad \quad  40$   & & 0.975 & 28.7 &  0.987 & & 28.3 & 0.779 & & 26.2 & 0.750 \\
 $\quad \quad  50$   & & 1.000 & 27.5 &  0.983 & & 27.2 & 0.756 & & 25.0 & 0.747 \\
 $\quad \,\,\,\, 100$ & & 2.100 & 24.4 &  0.971 & & 22.8 & 0.678 & & 21.5 & 0.559 \\
  \hline
\end{tabular}
\caption{The PSNR (in dB) and SSIM results of the denoised images at different noise levels by  TV, K-SVD, and proposed method.}
\label{T1}
\end{table}

\newpage
\section{Discussion and conclusion}\label{sec7}

A new image representation and analysis method has been proposed in this paper inspired from semi-classical results of the Schr\"odinger operator. The image is represented using spatially shifted and localized functions that are given by the squared $L^2$-normalized eigenfunctions of the Schr\"odinger operator associated to negative eigenvalues. We have shown that this approximation becomes exact when the semi-classical parameter $h$ converges to zero. However the number of eigenfunctions increases when $h$ decreases, so we have shown through some numerical results that a relatively small number of eigenfunctions (large enough $h$) is enough to reconstruct the image which makes this method very interesting for image processing applications like coding. 

Moreover, thanks to its interesting properties, this method seems to be also useful for image denoising. The main idea is to choose an appropriate value for the semi-classical parameter $h$ to filter the noise. The denoising property of the SCSA is under consideration along with the comparison of the SCSA to standard image processing methods.


\section*{Acknowledgments} This work was conducted when the first author was visiting the Estimation, Modeling and ANalysis Group at the Computer, Electrical and Mathematical Sciences and Engineering (CEMSE) division at King Abdullah University of Science and Technology (KAUST). She would like to thank KAUST for its support and generous hospitality.


\begin{thebibliography}{99}

\bibitem{Aharon2006}
M. Aharon, M. Elad, and A. M. Brukstein,
\newblock "The K-SVD: an algorithm for densigning of overcomplete dictionaries for sparse representation''
\newblock {\em IEEE Transaction on Signal Processing}, \textbf{Volume 54, Number 11}, (2006), 4311--4322.
\bibitem{Boyd2000}
   J. P. Boyd,
   \emph {Chebyshev and Fourier Spectral Methods}.
   \relax Dover Publication, Inc., $2^{ed}$ edition, (2000).
\bibitem{Dimassi1999}
   \newblock M. Dimassi and J. Sj\"ostrand,
   \newblock {Spectral asymptotics in the semi-classical limit},
   \newblock Cambridge U Press, (1999).
\bibitem{Do2003}
   \newblock  M.N. Do and M. Vetterli,
   \newblock {The finite ridgelet transform for image representation},
   \newblock  {\em IEEE Transaction on Image Processing}, \textbf{Volume 12, Number 1} (2003), 16 -- 28.
\bibitem{Dudgeon1993}
   \newblock  D.E. Dudgeon and G. Lorentz,
   \newblock {Constructive approximation},
   \newblock  Comprehensive studies in mathematics, Springef-Varlag, \textbf{Volume 303}, (1993).
\bibitem{Elad2006}
   \newblock  M. Elad and M. Aharon,
   \newblock {Image denoising via sparse and redundant representations over learned dictionaries},
   \newblock  {\em IEEE Transaction on Image Processing}, \textbf{Volume 303, Number 12} (2006), 3736 -- 3745.
\bibitem{Eleiwi2011}
   \newblock F. Eleiwi, T. M. Laleg Kirati, S. Khelladi and F. Bakir,
   \newblock \emph{ A semi-classical signal analysis method for the analysis of turbomachinery flow unsteadiness},
   \newblock {\em Word Academy of Science, Engineering and Technologyl}, \textbf{Volume 59} (2011), 215--218.
\bibitem{He2011}
  \newblock Y. He, T. Gan, W. Chen and H. Wang
  \newblock \emph{Adaptive denoising by singular value decomposition},
  \newblock {\em IEEE Signal Processing Letters}, \textbf{Volume 18, Number 4} (2011), 215--218.
\bibitem{Helffer1988}
  \newblock B. Helffer,
  \newblock {Semi-classical analysis for the Schr\"odinger operator and application},
  \newblock Lecture notes in mathematics, Springer.
\bibitem{Helffer2011}
  \newblock B. Helffer and T.M. Laleg-Kirati,
  \newblock \emph{On semi-classical questions related to signal analysis},
  \newblock {\em Asymptotic Analysis Journal}, \textbf{Volume 75, Number 3-4} (2011), 125--144.
\bibitem{Helffer1990}
  \newblock B. Helffer and D. Robert,
  \newblock \emph{Riesz means of bound states and semiclassical limit connected with a Lieb-Thirring's conjecture I},
  \newblock {\em Asymptotic Analysis Journal}, \textbf{Volume 3} (1990), 91--103.
\bibitem{Jain1989}
  \newblock A. K. Jain,
  \newblock {Fundamentals of digital image processing},
  \newblock Prentice-Hall, (1989).
\bibitem{Karadzhov1986}
  \newblock G. E. Karadzhov,
  \newblock \emph{Semi-classical asymptotic of spectral function for some Schr\"odinger operator},
  \newblock {\em Math. Nachr.}, \textbf{Volume 128} (1986), 103--114.
\bibitem{Kato1980}
  \newblock T. Kato,
  \newblock {Perturbation theory for linear operator},
  \newblock Classics in Mathematics, Springer, (1980).
\bibitem{Laleg-Kirati2007}
  \newblock T.M. Laleg-Kirati, C.M\'edigue, F. Cottin and M. Sorine,
  \newblock \emph{Arterial blood pressure analysis based on scattering transform II},
  \newblock {\em In Proc. EMBC, Sciences and technologies for health}, Lyon, France (2007), 3618--3629.
\bibitem{Laleg-Kirati2010}
  \newblock T.M. Laleg-Kirati, C.M\'edigue, Y. Papelier, F. Cottin and A. Van de Louw,
  \newblock \emph{Validation of a semi-classical Signal analysis method for Stroke volume variation assessment: a comparison with the PiCCO technique},
  \newblock {\em Annals of Biomedical Engineering}, \textbf{Volume 38, Number 12} (2010), 3618--3629.
\bibitem{Laleg-Kirati2013}
  \newblock T.M. Laleg-Kirati, E. Cr\'epeau and M. Sorine,
  \newblock \emph{Semi-classical signal analysis},
  \newblock {\em Mathematics of Control, Signals, and Systems (MCSS) Journal}, \textbf{Volume 25, Issue 1} (2013), 37--61.

\bibitem{Laleg-KiratiSeptember9-102014}
  \newblock T.M. Laleg-Kirati, Z. Kaisserli, S. Alghamdi, A. Coum, G. Gambarota and H. Serrai,
  \newblock \emph{Magnetic Resonance Spectroscopy data de-noising using the Semi-Classical Signal Analysis approach: Application to in-vitro MRS data},
  \newblock 5Th KAIMRC Annual Scientific Forum, KSA (September 9-10, 2014).

\bibitem{Liu2012}
  \newblock D.Y. Liu and T.M. Laleg-Kirati,
  \newblock \emph{ Mathematical properties of a semi-classical signal analysis method: noisy signal case},
  \newblock 1st International Conference on Systems and Computer Science, Villeneuve dascq, France (2012).
\bibitem{Louchet2011}
  \newblock C. Louchet and L. Moisan
  \newblock \emph{ Total variation as local filter},
  \newblock {\em SIAM Journal on Imaging Sciences}, \textbf{Volume 4, Number 2} (2011), 651--694.
\bibitem{Mallat2009} 
  \newblock S. Mallat,
  \newblock {A wavelet tour of signal processing : The sparse way},
  \newblock Third Edition, Elsevier, (2009).
\bibitem{Nadernejad2013}
  \newblock E. Nadernejad, Sara Sharifzadeh and S. Forchhammer
  \newblock \emph{Using anisotropic diffusion equations in pixon domain for image denoising},
  \newblock springer-Verlag London, \textbf{DOI 10.1007/s11760-012-0356-7}.
\bibitem{Pankov}
  \newblock A. Pankov
  \newblock \emph{Introduction to spectral theory of Schr\"odinger operators},
  \newblock http://www.math.nsysu.edu.tw/~amen/posters/pankov.pdf.
\bibitem{RamondJuin2005} 
 \newblock T. Ramond,
 \emph {Analyse semiclassique, r\'esonances et contr\t ole de l'\'equation de Schr\"odinger}.
 \relax Universit\'e Paris Sud, (UMR CNRS 8628), France, Juin 2005.

\bibitem{Trefethen2000} 
 \newblock L. N. Trefethen,
 \emph {Spectral methods in Matlab}.
 \relax Society for Industrial and Applied Mathematics, 2000. 
\bibitem{Wang2013}
\newblock Y. Wang, J. Guo, W. F. Chen, and W. Zhang.
\newblock "Image denoising using modified Perona-Malik model based on directional Laplacian,''
\newblock {\em Signal Processing}, \textbf{Volume 93, Number 9} (2013), 2548-2558.
\bibitem{Wang2004}
\newblock Z. Wang, A.C. Bovik, H.R. Sheikh, and E.P. Simoncelli,
\newblock "Image quality assessment: from error visibility to structural similarity,''
\newblock {\em IEEE Transaction on Image Processing}, \textbf{Volume 4, Number 13} (2004), 1--14.
\bibitem{Yan2014}
\newblock R. Yan, L. Shao, L. Liu, and Y. Liu,
\newblock "Natural image denoising using envolved local adaptive fiilter''
\newblock {\em Signal Processing}, 103 (2014), 36--44.
 \bibitem{Zhang2012}
\newblock J. Zhang, K. Chen and B. Yu,
\newblock ``An iterative lagrange multiplier method for constrained Total-Variation-based image denoising,''
\newblock {\em SIAM Journal on Numerical Analysis}, \textbf{Volume 50, Number 3} (2012), 983--1003.

\bibitem{Zhang2010}
\newblock L. Zhang, W. Dong, D. Zhang, and G. Shi,
\newblock "Two-stage image denoising by principal component analysis with local pixel grouping'',
\newblock {\em Pattren Recognition Elsevier}, \textbf{Volume 43}, (2010), 1531 -- 1549.

\end{thebibliography}
\end{document}